	\theoremstyle{definition}
		\newtheorem{thm}{Theorem}[section]
		\newtheorem{lem}[thm]{Lemma}
		\newtheorem{prop}[thm]{Proposition}
		\newtheorem{cor}[thm]{Corollary}
		\newtheorem{defn}[thm]{Definition}
\newcommand{\K}{\mathbb{C}}
\newcommand{\KO}{\mathbb{K}}
\newcommand{\N}{\mathbb{N}}
\newcommand{\R}{\mathbb{R}}
\newcommand{\Z}{\mathbb{Z}}
\newcommand{\Ad}{\text{\rm Ad}}
\newcommand{\Cstar}{{\text\rm C}^*}
\newcommand{\Sp}{\text{\rm sp}}
\newcommand{\Prim}{\text{\rm Prim}}
\newcommand{\rank}{\text{\rm rank}}
\newcommand{\Ima}{\text{\rm Im }}
\newcommand{\id}{\text{\rm id}}
\newcommand{\supp}{\text{\rm supp }}
\title{STABLE RECURSIVE SUBHOMOGENEOUS ALGEBRAS}
\thanks{*Address: \emph{Department of Mathematics, East China Normal University, North Zhongshan Road 3663,
Shanghai, China, 200062}\\
 Keyword: {\it$\Cstar$-algebras, stable recursive subhomogeneous algebras, stable rank on.}\\
Subject Classification: {\it 46 Functional Analysis, 47 Operator Theory}}
\begin{document}
\thispagestyle{empty}

\maketitle

\begin{center}
Hutian Liang*
\end{center}

\vspace{0.3in}
\begin{center}
{\bf Abstract}
\end{center}

{\small In this paper, we introduce \emph{stable recursive subhomogeneous algebras} 
(SRSHAs), which is analogous to recursive subhomogeneous algebras (RSHAs) 
introduced by N.~C.~Phillips in the studies of free minimal integer actions on compact
metric spaces.
The difference between the stable version and the none stable version is that the irreducible 
representations of SRSHAs are  infinite dimensional, but the irreducible representations of the 
RSHAs are finite dimensional.  While RSHAs play an important role in the study of free minimal
integer actions on compact metric spaces, SRSHAs play an analogous role in the study of free
minimal actions by the group of the real numbers on compact metric spaces. 
In this paper, we show that simple inductive limits of 
SRSHAs with no dimension growth in which the connecting maps are injective and non-vanishing have 
topological stable rank one.
}

\section{Introduction}
Recursive subhomogeneous algebras, abbreviated RSHA, are introduced by N. C. Phillips
in \cite{PhillipsRSHA}.  Essentially, a RSHA is an iterated
pull back of algebras of the form $C(X, M_n),$ where the spaces $X$ are taken to be
compact Hausdorff space,  $M_n$ is the algebra of
$n\times n$-matrices, and $C(X, M_n)$ is the algebra of all continuous functions from
$X$ into $M_n.$  In some sense, a recursive subhomogeneous
algebra is formed by ``gluing'' finitely many algebras of the form $C(X, M_n)$
together.  RSHAs played a crucial role in the study of free minimal $\Z$ actions on compact
metric spaces of finite dimension, where $\Z$ denotes the group of integers.  
In \cite{HLinPhillips}, H. Lin and N. C. Phillips showed that
under certain hypothesis about traces, the crossed product obtained from a free minimal
$\Z$ action on a finite dimensional compact metric space has tracial rank zero.  
The proof of this result relies heavily on the fact the crossed product algebra contains a 
subalgebra that can be written as a simple direct limit of RSHAs, whose structure is simple 
enough that it is possible to show that the RSHA has tracial rank zero.  Of course the other 
important part of  the proof is that there is a link between the subalgebra and the crossed 
product algebra
 so that the property of the subalgebra can be extended to the entire crossed product. 

While RSHAs are important tools for the study of free minimal $\Z$ actions on compact metric
spaces, they cannot be applied to the study of the free minimal $\R$ actions, where $\R$
stands for the group of the real numbvers.  This because $\R$ is not discrete.  In the cases of 
$\R$ actions, we need a ``stable'' version of the RSHAs. In this paper, we introduce an 
analogous ``stable'' version of RSHA. We will use $\KO$ to denote the algebra of all compact 
operators on the separable infinite dimensional Hilbert space throughout the paper.  If $A$ is any
$\Cstar$-algebra, we will take $C(\varnothing, A)$ to be the zero algebra.

\begin{defn}
\label{non-vanishingHom}
Let $A, B$ be $\Cstar$-algebras,  let $X$ be a compact Hausdorff space,
and let \mbox{$\phi\colon A\rightarrow C(X, B)$} be a *-homomorphism.  
We say $\phi$ is
{\emph{non-vanishing}} if for all $x\in X,$ there exists some $a\in A$ such
that $\phi(a)(x)\neq 0.$  
\end{defn}

Note that in the above definition, if $X=\varnothing,$ then $\phi$ is vacuously
non-vanishing.

\begin{defn}
\label{RSHASystem}
Let $H$ be a separable infinite dimensional Hilbert space and let $\KO$ denote
the set of all compact operators on $H$.
Let $n$ be a positive integer, 
let
$X_1, \ldots, X_n$ be compact Hausdorff spaces,  let 
$X_k^{(0)}\subseteq X_k$ be closed subspaces for $k=2, \ldots, n,$ 
and let $R_k\colon C(X_k, \KO)\rightarrow C(X_k^{(0)}, \KO)$
be the restriction map for $k=2, \ldots, n.$
For each $k$ with $2\leq k\leq n,$
let $\phi_k\colon A^{(k-1)}\rightarrow C(X_k^{(0)}, \KO)$ be a non-vanishing
*-homomorphism,  let $A^{(1)}=C(X_1, \KO),$ and inductively define 
$$A^{(k)}=\{(a, b)\in A^{(k-1)}\oplus C(X_k, \KO)\colon 
\phi_k(a)=R_k(b)\}.$$  We call 
$$\left(X_1, A^{(1)}, \left(X_k, X_k^{(0)}, \phi_k,
R_k, A^{(k)}\right)_{k=2}^n\right)$$
a {\emph{stable recursive sub-homogeneous system}},
abbreviated SRSH system, 
and 
call the algebra $A^{(n)}$ the {\emph{stable recursive sub-homogeneous
algebra}}, 
abbreviated by SRSHA, corresponding to the system.  

Let $A$ be a $\Cstar$-algebra.  We say that $A$ has a {\emph{stable recursive
sub-homogeneous decomposition}} if there exists a stable recursive
sub-homogeneous system $$\left(X_1, A^{(1)}, \left(X_k, X_k^{(0)}, \phi_k, R_k,
A^{(k)}\right)_{k=2}^n\right)$$ such that $A\cong A^{(n)},$ in which case we also say
that $A$ is a stable recursive sub-homogeneous algebra, and call the system
a stable recursive sub-homogeneous decomposition of $A.$

The integer $n$ is called the {\emph{length}} of the 
system (or the decomposition). The spaces $X_1, \ldots, X_n$ are
called the {\emph{bases spaces}} of the system.
The space $X=\bigsqcup_{k=1}^n X_k$ is called the {\emph{total space}} of the
system.
The spaces $X_2^{(0)}, \ldots, X_n^{(0)}$ are called the {\emph{attaching spaces}}
of the system.
The maps $R_2, \ldots, R_k$ are called the {\emph{restriction maps}} of the system.
The maps $\phi_2, \phi_3, \ldots, \phi_n$ are called the
{\emph{attaching map}} of the system.
For each $k \in \{1, \ldots, n\},$ the algebra $A^{(k)}$ is
called {\emph{$k$-th partial algebra}} of the system.
\end{defn}

Note that a SRSH system of length 1 is simply $(X_1, C(X_1, \KO)).$
For a SRSHA $A,$ the decomposition is by no means
unique.  We allow any or all of the attaching spaces to be the empty set.  If
$X_k^{(0)}=\varnothing$ for
some $k,$ then $A^{(k)}$ is simply $A^{(k-1)}\oplus
C(X_k, \KO).$  
If $A$ has a stable SRSH decomposition 
$$\left(X_1, A^{(1)}, \left(X_k, X_k^{(0)}, \phi_k, R_k, A^{(k)}\right)_{k=2}^n\right),$$ then $A$ is a
$\Cstar$-subalgebra of $\bigoplus_{k=1}^n C(X_k, \KO);$
also for each $k \in \{1, \ldots, n\},$ the $k$-th partial algebra is also a SRSHA with the
decomposition being
$$\left(X_1, A^{(1)}, \left(X_i, X_i^{(0)}, \phi_i, R_i,
A^{(i)}\right)_{i=2}^k\right).$$
Let $a=(a_1, \ldots, a_n)\in A$ and  let $x$ be in the total space
$X.$
Then there exists unique $k$ such that $x\in X_k.$  We will use $a(x)$ to
denote $a_k(x).$ So for each $x\in X,$ the map $A\rightarrow \KO$
sending $a\mapsto a(x)$ is a clearly *-homomorphism.  If $1\leq k\leq l\leq n,$ 
then it is easily verified that the map $p_{l, k}\colon A^{(l)}\rightarrow A^{(k)}$
defined
by $p_{l, k}(a_1, \ldots, a_l)=
(a_1, \ldots, a_k)$ is a surjective *-homomorphism.
If $1\leq
k\leq l \leq m\leq n,$ then $p_{m, k}=p_{l, k}\circ p_{m, l}.$

In this paper will establish the following result about simple inductive limits of stable 
recursive subhomogeneous algebras.

\begin{defn}
Let $A$ be $\Cstar$-algebra.  We say that $A$ has \emph{topological stable rank
 one} (or simply \emph{stable rank one}) if the set of invertible elements of $A$ is 
norm dense in $A$.
\end{defn}
\begin{thm}
\label{StableRankOfSimpleInductLimOfSRSHA}
Let $(A_n, \psi_n)$ be an inductive system of SRSHAs and let $A$ be the
inductive limit.  Let $X_n$ be the total space for $A_n.$
Suppose that $\psi_n$ is injective and non-vanishing for all $n,$ 
and suppose that $A$ is
simple.  Also assume that there exists $d\in \N$ such that 
$\dim (X_n)\leq d$ for all $n\geq 1.$
Then $A$ has topological stable rank one.
\end{thm}

\section{Ideals and Homomorphisms of SRSHAs}

In this section we establish some results about the spectrum, primitive ideal
space, and ideals of a SRSHA.  
We will use
$\widehat A$ to denote the spectrum of $A,$ i.e.\ the space of all irreducible
representations of $A,$ and if $\pi$ is an irreducible representation of $A,$ we
will use $[\pi]$ to denote the corresponding element in $\widehat A.$ We will use
$\Prim(A)$ to denote the primitive ideal space
of $A.$
The next lemma is a standard result.
\begin{lem}
\label{IrrRepC(X,K)}
Let $X$ be a locally compact Hausdorff space and let $A=C_0(X, \KO).$  For
each
$x\in X,$ let ${\mathrm{ev}}_x\colon A\rightarrow \KO$ be defined by ${\mathrm{ev}}_x(f)=f(x).$  
Then
\begin{enumerate}
\item 
the map $X\rightarrow \widehat{A}$ defined by $x\mapsto [{\mathrm{ev}}_x]$ is a well
defined bijection;

\item the map $X\rightarrow \Prim(A)$ defined by $x\mapsto \{f\in
A\colon f(x)=0\}$ is a well-defined bijection.
\end{enumerate}
\end{lem}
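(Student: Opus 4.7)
The plan is to establish part (1) first by showing the map is well-defined, injective, and surjective, and then derive part (2) from it together with the identification of the primitive ideals of $A$.

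For well-definedness, I would first verify that each $\mathrm{ev}_x\colon A\to \KO$ is a surjective $*$-homomorphism. Given $T\in \KO$, pick a function $g\in C_c(X)$ with $g(x)=1$; then the element $gT\in C_0(X,\KO)=A$ satisfies $\mathrm{ev}_x(gT)=T$. Composing $\mathrm{ev}_x$ with the defining irreducible representation of $\KO$ on $H$ produces an irreducible representation of $A$, so $x\mapsto [\mathrm{ev}_x]$ indeed lands in $\widehat{A}$. For injectivity, if $x\neq y$ then local compactness and the Hausdorff property give $g\in C_0(X)$ separating $x$ and $y$; for any nonzero $T\in\KO$, the element $gT$ lies in $\ker \mathrm{ev}_y\setminus \ker \mathrm{ev}_x$, so the two kernels differ and the two representations are inequivalent.

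The main work is surjectivity of $x\mapsto[\mathrm{ev}_x]$. My key lemma would be that every closed two-sided ideal $J$ of $A=C_0(X,\KO)$ has the form
\[
J_U=\{f\in A : f(x)=0 \text{ for all } x\in X\setminus U\}
\]
for some open $U\subseteq X$. To prove this, let $U=\{x\in X : f(x)\neq 0 \text{ for some } f\in J\}$; openness of $U$ is immediate from continuity, and the inclusion $J\subseteq J_U$ is trivial. For the reverse inclusion I would use that $\KO$ is simple: given $f\in J_U$ and $x\in U$, choose $h\in J$ with $h(x)\neq 0$, and use simplicity of $\KO$ together with an approximation by functions of the form $\sum g_i a h b_i$ (with $a\in J$, $g_i,b_i\in A$) on small neighborhoods and a partition-of-unity argument on the compact support of $f$ to show $f\in J$. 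Granted this lemma, the lattice of closed ideals of $A$ is order-isomorphic to the lattice of open subsets of $X$; since primitive ideals are prime, the primitive ideals must correspond to open sets whose complement is irreducible, which in a Hausdorff space means single points. Thus $\Prim(A)=\{I_x : x\in X\}$ where $I_x=\ker\mathrm{ev}_x$.

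Now given any irreducible representation $\pi$ of $A$, its kernel is some $I_x$, so $\pi$ factors through $A/I_x\cong\KO$ via $\mathrm{ev}_x$. Since $\KO$ has (up to unitary equivalence) a unique irreducible representation, the induced representation of $\KO$ is unitarily equivalent to the standard one on $H$, and therefore $[\pi]=[\mathrm{ev}_x]$. This completes part (1). Part (2) then follows: the map $x\mapsto I_x$ is well-defined and injective because distinct points give distinct kernels (already used in the injectivity argument above), and it is surjective by the identification $\Prim(A)=\{I_x\}$ established in the previous paragraph.

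The main obstacle I foresee is the ideal-classification lemma in the middle paragraph; the other steps are formal once one knows that (a) $\KO$ is simple with a unique irreducible representation and (b) the closed ideals of $A$ correspond bijectively to open subsets of $X$. Everything else reduces to Urysohn-type separation arguments and the universal property of the evaluation maps.
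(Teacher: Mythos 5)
The paper gives no proof of this lemma at all --- it is introduced with the sentence ``The next lemma is a standard result'' and used as a black box --- so there is no argument of the paper's to compare yours against step by step. On its own terms your proof is correct and is the standard one. The logical structure is sound in the one place where care is genuinely needed: you use the implication \emph{primitive $\Rightarrow$ prime} only to show that every primitive ideal is of the form $\ker {\mathrm{ev}}_x$ (open sets whose complement is an irreducible closed set are, in a Hausdorff space, complements of singletons), and you get the converse inclusion from the irreducibility of ${\mathrm{ev}}_x$ itself (surjectivity onto the simple algebra $\KO$), so you never need the delicate converse \emph{prime $\Rightarrow$ primitive}. The only step left as a sketch is the classification of the closed two-sided ideals of $C_0(X,\KO)$ as the ideals $J_U$; this is itself a standard fact, and your outline (simplicity of $\KO$ plus a partition of unity) is the right one. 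Two small points you should make explicit if you write it out in full: a function in $J_U$ need not have compact support, so the partition-of-unity argument should be preceded by the remark that any $f\in J_U$ is a norm limit of elements supported in compact subsets of $U$ (cut off where $\|f(x)\|\geq\epsilon$); and in the final step, after factoring $\pi$ through $A/\ker{\mathrm{ev}}_x\cong\KO$, you are invoking the uniqueness up to unitary equivalence of the irreducible representation of $\KO$, which is the same fact (Arveson, Theorem 1.4.4) that the paper itself quotes later as Theorem \ref{RepSubAlgOfK}.
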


\begin{lem}
\label{IrreRepSRSHA}
Let $n$ be a positive integer.  Let $$(X_1, A^{(1)}, (X_k, X_k^{(0)},
\psi_k, R_k, A^{(k)})_{k=2}^n)$$ be a stable recursive sub-homogeneous
system and let $A=A^{(n)}.$  Let $X_1^{(0)}=\varnothing.$  Then
\begin{enumerate}
\item the map 
$M\colon\bigsqcup_{k=1}^n (X_k\setminus X_k^{(0)})\rightarrow \Prim(A)$ 
defined by $M(x)=\{a\in A\colon
a(x)=0\}$ is a well defined bijection.
\item for each $x\in \bigsqcup_{k=1}^n (X_k\setminus X_k^{(0)}),$ the evaluation map
${\mathrm{ev}}_x\colon A\rightarrow \KO,$ given by $a\mapsto a(x),$ is non-zero; also the map 
$S\colon\bigsqcup_{k=1}^n (X_k\setminus X_k^{(0)})\rightarrow \widehat A$ 
defined by $S(x)=[{\mathrm{ev}}_x]$ is a well defined bijection.
\end{enumerate}
\end{lem}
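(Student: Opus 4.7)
The natural strategy is induction on the length $n$ of the system. For the base case $n = 1$, we have $A = C(X_1, \KO)$ and $X_1^{(0)} = \varnothing$, so both statements reduce immediately to Lemma \ref{IrrRepC(X,K)} applied to $X = X_1$.

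For the inductive step, let $I$ denote the kernel of the surjection $p_{n,n-1}\colon A^{(n)} \to A^{(n-1)}$, so that we have a short exact sequence $0 \to I \to A^{(n)} \to A^{(n-1)} \to 0$. Unwinding the definition of $A^{(n)}$, every element of $I$ has the form $(0, \ldots, 0, b)$ with $b \in C(X_n, \KO)$ and $b|_{X_n^{(0)}} = 0$, so $I$ is naturally isomorphic to $C_0(X_n \setminus X_n^{(0)}, \KO)$. I then invoke the standard description of the primitive ideal space of an extension: as sets, $\Prim(A^{(n)})$ is the disjoint union of $\Prim(I)$ and $\Prim(A^{(n-1)})$, where a primitive ideal $P$ of $A^{(n-1)}$ pulls back to $p_{n,n-1}^{-1}(P)$, and a primitive ideal $J$ of $I$ extends to the unique primitive ideal of $A^{(n)}$ whose intersection with $I$ equals $J$. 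Applying the inductive hypothesis to $A^{(n-1)}$ and Lemma \ref{IrrRepC(X,K)} to $I$, the two pieces together give the required bijection $\bigsqcup_{k=1}^n(X_k \setminus X_k^{(0)}) \to \Prim(A^{(n)})$; unwinding the lifting/extending shows that the primitive ideal assigned to $x$ in either case is precisely $\{a \in A^{(n)} : a(x) = 0\} = M(x)$.

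For part (2), the plan is to show that $\mathrm{ev}_x \colon A^{(n)} \to \KO$ is surjective for each $x$ in the given disjoint union; composing with the (irreducible) standard representation of $\KO$ on $H$ then produces an irreducible representation of $A^{(n)}$ whose kernel is $M(x)$. For $x \in X_n \setminus X_n^{(0)}$, surjectivity holds already on $I \cong C_0(X_n \setminus X_n^{(0)}, \KO)$ by a Urysohn-type argument. For $x \in X_k \setminus X_k^{(0)}$ with $k < n$, the inductive hypothesis yields surjectivity of $\mathrm{ev}_x$ on $A^{(n-1)}$, and the surjectivity of $p_{n,n-1}$ transports this back to $A^{(n)}$. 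Since $\KO$ has a unique irreducible representation up to unitary equivalence and SRSHAs are of type I (so the canonical map $\widehat{A} \to \Prim(A)$ is a bijection), the bijectivity of $S$ is then a formal consequence of part (1).

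The main obstacle is a technical point underlying the induction: verifying that $p_{n,n-1}$ is genuinely surjective, which reduces to the assertion that the restriction map $C(X_n, \KO) \to C(X_n^{(0)}, \KO)$ is surjective. This is a noncommutative Tietze extension theorem for $\KO$-valued continuous functions on a compact Hausdorff space and should be invoked explicitly. The remainder of the argument is careful bookkeeping to verify that under the extension correspondence the primitive ideal and irreducible representation attached to each $x$ coincide on the nose with $M(x)$ and $[\mathrm{ev}_x]$.
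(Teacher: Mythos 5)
Your proposal is correct, and its skeleton is the same as the paper's: induction on the length, the ideal $I=\ker p_{n,n-1}$ identified with $C_0(X_n\setminus X_n^{(0)},\KO)$, and a case split according to whether a representation vanishes on $I$. The difference is in how the correspondence is certified. The paper works everything out by hand: it shows each ${\mathrm{ev}}_x$ is surjective onto $\KO$ (hence irreducible), explicitly extends an irreducible representation of $I$ to $A^{(n)}$ via a unitary conjugate of an evaluation and nondegeneracy, and proves injectivity of $M$ and $S$ by constructing, for each pair $x\neq y$, concrete elements of $A$ separating them (a construction it reuses in Corollary \ref{SRSHASeparatesPoints}). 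You instead cite the standard decomposition $\Prim(A)=\Prim(I)\sqcup\Prim(A/I)$ (and its analogue for spectra) for an extension, which absorbs both the extension step and the injectivity of $M$ into general theory, and you recover the bijectivity of $S$ from that of $M$ by observing that $A$ is type I (closed under extensions, base case $C(X_1,\KO)$) so that $\widehat{A}\to\Prim(A)$ is bijective. Both routes are sound; yours is shorter but leans on more machinery, while the paper's explicit separating elements are needed later anyway. You are also right to isolate the surjectivity of $p_{n,n-1}$, equivalently the surjectivity of the restriction $C(X_n,\KO)\to C(X_n^{(0)},\KO)$, as the technical point that must be invoked: the paper asserts this surjectivity as ``easily verified'' in the discussion following Definition \ref{RSHASystem} and uses it repeatedly inside this proof, so making the $\KO$-valued Tietze extension explicit is an improvement rather than a deviation.
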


\begin{proof}
Induct on $n.$  The case when $n=1$ is given by Lemma
\ref{IrrRepC(X,K)}.
Suppose that statement holds for some $n,$  let 
\vskip 1em plus 5pt minus 3pt
$$\left(X_1, A^{(1)}, \left(X_k, X_k^{(0)}, \psi_k, R_k, A^{(k)}\right)_{k=2}^{n+1}\right)$$ 
be a SRSH system of length
$n+1$ and let $A=A^{(n+1)}.$

 Let $1\leq i\leq n+1$ and let $x\in X_i\setminus X_i^{(0)}.$
 Define
 $\pi\colon A^{(n+1)}\rightarrow \KO$ by $\pi(f_1, \ldots, f_{n+1})=f_i(x).$
 Then $\pi$ is a clearly a *-homomorphism. Let $a\in \KO.$
Choose $h\in C(X_i)$ such that  $h(x)=1$ and $\supp\,
 h\subseteq X_i\setminus X_i^{(0)},$ and let $f\in C(X_i, \KO)$ be defined by
 $f(y)=h(y)a.$  
 Then $\supp\, f\subseteq X_i\setminus X_i^{(0)}.$  
Hence 
 $R_i(f)=f|_{X_i^{(0)}}=0=\psi_i(0),$ and so $(0, 
 \ldots, 0, f)\in A^{(i)}.$  Since the map $A^{(n+1)}\rightarrow
 A^{(i)}$ defined by $(g_1, \ldots, g_{n+1})\mapsto (g_1, \ldots, g_i)$ is
 surjective, there exist $g_{i+1}, \ldots, g_{n+1}$ such that 
$\xi=(0, \ldots,
 0, f, g_{i+1}, \ldots, g_{n+1})\in A^{(n+1)}.$
 Then $\pi(\xi)=f(x)=a.$  Thus $\pi={\mathrm{ev}}_x$ maps onto $\KO,$ 
 and so $\pi$ is non-zero and irreducible.  This shows that the map $S$
defined in part 2
 of the statement of the lemma is well defined.  Further, this also shows that
 $$\{(g_1, \ldots, g_{n+1})\in A^{(n+1)}\colon g_i(x)=0\}=\ker \pi\in
\text{Prim}(A^{(n+1)}),$$ and so $M$ defined in part 1 of the statement of the
lemma is well defined.  

Now consider $$I_{n+1}=\{(f_1,
\ldots, f_n, f_{n+1})\in A^{(n+1)}\colon (f_1, \ldots, f_n)=0\}.$$  Then it is
clear that $I_{n+1}$ is a closed two sided ideal of $A.$  Note that
if $(f_1, \ldots, f_{n+1})\in I_{n+1},$ then $0=\psi_{n+1}(f_1, \ldots,
f_n)=R_{n+1}(f_{n+1}),$ and so $f_{n+1}$ vanishes on $X_{n+1}^{(0)}.$ Define
$$\phi\colon I_{n+1}\rightarrow C_0(X_{n+1}\setminus X_{n+1}^{(0)}, \KO)$$ by
$\phi(f_1, \ldots, f_{n+1})=f_{n+1}|_{{X_{n+1}}\setminus X_{n+1}^{(0)}}.$
This map
is well defined because if $(f_1, \ldots, f_{n+1})\in I_{n+1},$ then
$f_{n+1}$ vanishes on $X_{n+1}^{(0)},$ so $f_{n+1}\in C_0(X_{n+1}\setminus
X_{n+1}^{(0)}, \KO).$  Then it is clear that $\phi$ is a *-isomorphism.

Now let $\pi\colon A\rightarrow B(H)$
be a non-zero irreducible representation.  First
assume that $\pi|_{I_{n+1}}\colon I_{n+1}\rightarrow B(H)$ is not the zero
representation.  Then $\pi|_{In_{n+1}}$ is also irreducible.  Thus $\pi\circ
\phi^{-1}$ is an irreducible representation of $C_0(X_{n+1}\setminus
X_{n+1}^{(0)}, \KO),$ and so by Lemma \ref{IrrRepC(X,K)} there exists $x\in
X_{n+1}\setminus
X_{n+1}^{(0)},$
such that $[\pi\circ\phi^{-1}]=[{\mathrm{ev}}_x].$  Then there exists a unitary $u$
such that $\pi\circ\phi^{-1}=\Ad(u)\circ {\mathrm{ev}}_x,$ where $
\Ad(u)\colon\KO\rightarrow\KO$
is defined by $\Ad(u)(a)=uau^*.$  Define $\pi'\colon A\rightarrow
B(H)$ by
$\pi'(f_1, \ldots, f_{n+1})=\Ad(u)(f_{n+1}(x)).$  Then
$\pi|_{I_{n+1}}=\pi'|_{I_{n+1}}.$  Since $\pi|_{I_{n+1}}=\pi'|_{I_{n+1}}$
is irreducible, hence non-degenerate, we have  $\pi=\pi'.$  Then $S(x)
=[\pi']=[\pi].$

Now suppose that $\pi|_{I_{n+1}}=0.$  
Define $\psi\colon A^{(n+1)}\rightarrow A^{(n)}$ by $\psi(f_1, \ldots,
f_{n+1})=(f_1, \ldots, f_n).$
Consider the short exact sequence
$$ 0\rightarrow I_{n+1}\rightarrow A^{(n+1)}\stackrel{\psi}{\longrightarrow}
A^{(n)}\rightarrow
0.$$
  Since $\pi$
restricts to zero on $I_{n+1},$ $\pi$ factors through $A^{(n)}.$  That is, there
exists $\widetilde\pi\colon A^{(n)}\rightarrow B(H)$ such that  $\widetilde\pi\circ
\psi=\pi.$  Then $\Ima \pi=\Ima \widetilde\pi.$  Since $\pi$ is irreducible, we
see that $\widetilde\pi$ is also irreducible.  Thus by the inductive hypothesis,
we see that there exists some $1\leq i\leq n$ and some $x\in X_i\setminus
X_i^{(0)}$ such that  $[\widetilde \pi]=[{\mathrm{ev}}_x].$  So there exists a unitary such
that $\widetilde\pi(f)=\Ad(u)(f(x))$ for all $f\in A^{(n)}.$  Then for all $f=(f_1,
\ldots, f_n, f_{n+1})\in
A^{(n+1)},$ we have $\pi(f)=\widetilde\pi(\psi(f))=\widetilde\pi(f_1, \ldots,
f_n)=\Ad(u)(f_i(x)).$  Thus $[\pi]=S(x),$ and hence $S$ is surjective.
If $J\in \Prim(A),$ then there exists some irreducible representation
$\pi$ of $A$ such that $J=\ker \pi.$  So there exists $x\in \bigsqcup_{k=1}^{n+1}
\left(X_k\setminus X_k^{(0)}\right)$ such that $[{\mathrm{ev}}_x]=[\pi].$  It follows
that
$$J=\ker\pi=\ker
{\mathrm{ev}}_x=\{a\in A\colon  a(x)=0\}=M(x).$$ Thus $M$ is also surjective.

Next we show that $M$ and $S$ are injective.  Let $x, y\in \bigsqcup_{k=1}^{n+1}
(X_k\setminus X_k^{(0)})$ and suppose that $x\neq y.$  First assume that there exist
$1\leq j<k\leq n$ such that $x\in X_j\setminus X_j^{(0)}$ and  $y\in
X_k\setminus X_k^{(0)}.$  Let $h\in C(X_k)$ satisfy 
$h(y)=1$ and $\supp h\subseteq X_k\setminus X_k^{(0)},$ let $a\in \KO$
be a non-zero element,  let $f=ah,$ and let $b=(0,
\ldots, 0, f)\in A^{(k)}.$  Let $f_{k+1}, \ldots, f_{n+1}$ be such that $g=(b,
f_{k+1}, \ldots, f_{n+1})\in A^{(n+1)}.$  Then $g(x)=0,$ but $g(y)=a\neq 0.$
Thus $g\in M(x),$ but $g\notin M(y),$ and so $M(x)\neq M(y).$  Since $M(x)=\ker
{\mathrm{ev}}_x$ and $M(y)=\ker {\mathrm{ev}}_y,$ we have $S(y)=[{\mathrm{ev}}_y]
\neq [{\mathrm{ev}}_x]=S(x).$  
Now suppose that $x, y\in X_k\setminus X_k^{(0)}$ for some $1\leq k\leq n.$
Since $x, y$ are different, there exists an open $U\subseteq X_k\setminus
X_k^{(0)}$ such that $y\in U,$ but $x\notin U.$  Choose $h\in C(X_k)$
such that $h(y)=1$ and $h$ vanishes outside of $U.$  Let $a\in
\KO$ be non-zero.  Let $f=ah.$ Then $f$ vanishes on $X_k^{(0)}$.  So there exist
$$g_{k+1}\in C(X_{k+1}, \KO), \ldots, g_{n+1}\in C(X_{n+1}, \KO)$$ such that
$g=(0, \ldots, 0, f, g_{k+1}, \ldots, g_{n+1})$ belongs to $A$.
Then $g(x)=f(x)=0$ and $g(y)$$=f(y)=a.$  It follows that 
$g\in M(x),$ but $g\notin M(y).$
So $M(y)\neq M(x),$ and consequently $S(x)\neq S(y).$
\end{proof}

\begin{cor}
\label{SRSHASeparatesPoints}
Let $$\left(X_1, A^{(1)}, \left(X_k, X_k^{(0)},
\psi_k, R_k, A^{(k)}\right)_{k=2}^n\right)$$ be a stable recursive sub-homogeneous
system and let $A=A^{(n)}.$  Let $X_1^{(0)}=\varnothing.$ 
Then for all $x, y\in \bigsqcup_{k=1}^n (X_k\setminus X_k^{(0)})$ with
$x\neq y,$ there exist some $a, b\in A$ such that $a(x)=0,$ $a(y)\neq
0,$ $b(x)\neq 0,$ and $b(y)=0.$
\end{cor}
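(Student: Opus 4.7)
The plan is to deduce this from Lemma \ref{IrreRepSRSHA} together with the simplicity of $\KO.$ The key observation is that $M(x)$ and $M(y)$ are two distinct primitive ideals of $A$ by the injectivity part of Lemma \ref{IrreRepSRSHA}, and the quotients $A/M(x)$ and $A/M(y)$ are each isomorphic to the simple algebra $\KO,$ because the proof of Lemma \ref{IrreRepSRSHA} already shows that each evaluation map ${\mathrm{ev}}_x\colon A\to\KO$ is surjective.

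From here I would argue by contradiction that neither ideal contains the other. Suppose $M(x)\subseteq M(y).$ Then the canonical quotient map
$$A/M(x)\twoheadrightarrow A/M(y)$$
is a nonzero *-homomorphism from $\KO$ onto $\KO,$ with kernel $M(y)/M(x).$ Since $\KO$ is simple, this kernel must be either $0$ or all of $\KO;$ being the kernel of a nonzero map it is $0,$ so $M(x)=M(y),$ contradicting the distinctness already guaranteed by Lemma \ref{IrreRepSRSHA}. By symmetry $M(y)\not\subseteq M(x)$ either.

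With incomparability in hand, both set differences $M(x)\setminus M(y)$ and $M(y)\setminus M(x)$ are nonempty. Any $a\in M(x)\setminus M(y)$ satisfies $a(x)=0$ and $a(y)\neq 0,$ and any $b\in M(y)\setminus M(x)$ satisfies $b(x)\neq 0$ and $b(y)=0,$ which is exactly the required pair.

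There is no substantial obstacle to overcome: all the hard work was done in proving Lemma \ref{IrreRepSRSHA}, and what remains is only the standard remark that two distinct primitive ideals with simple quotients must be incomparable. A more concrete alternative would be to bypass the quotient argument and instead reproduce the construction used in the injectivity part of the proof of Lemma \ref{IrreRepSRSHA} twice, once with the roles of $x$ and $y$ swapped; this works equally well but forces one to reproduce the case split according to whether $x$ and $y$ lie in the same $X_k$ or in different ones, and is strictly longer than the abstract argument above.
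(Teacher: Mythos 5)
Your argument is correct, but it takes a genuinely different route from the paper. The paper proves the corollary by explicit construction: for the element $a$ it reuses the functions built in the last paragraph of the proof of Lemma \ref{IrreRepSRSHA}, and for $b$ it carries out a second construction by hand, choosing bump functions, pushing an element of $A^{(j)}$ forward through the attaching map $\phi_k$, and extending the result over $X_k$ via a cutoff supported near $X_k^{(0)}$ (with a separate, easier case when $x$ and $y$ lie in the same $X_k$). You instead observe that $M(x)=\ker{\mathrm{ev}}_x$ and $M(y)=\ker{\mathrm{ev}}_y$ are distinct primitive ideals with simple quotients $A/M(x)\cong A/M(y)\cong\KO$ (surjectivity of ${\mathrm{ev}}_x$ is indeed established in the proof of Lemma \ref{IrreRepSRSHA}), and that two distinct ideals whose quotients are simple and nonzero must be incomparable, since a containment $M(x)\subseteq M(y)$ would make $M(y)/M(x)$ a nontrivial proper closed two-sided ideal of the simple algebra $A/M(x)$. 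Incomparability hands you both $a\in M(x)\setminus M(y)$ and $b\in M(y)\setminus M(x)$ at once. Your abstract argument is shorter and avoids the case split on whether $x$ and $y$ lie in the same base space; what the paper's concrete construction buys is explicit control over the witnesses (they are bump functions times a fixed compact operator, supported in prescribed open sets), a form of output that the paper leans on in several neighboring proofs even though the corollary as stated only asserts existence.
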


\begin{proof}
First suppose that $x\in X_j\setminus X_j^{(0)}$ and $y\in X_k\setminus
X_k^{(0)},$ where $1\leq j<k\leq n.$  Then the element $a\in A$
needed is constructed in the last paragraph of the proof of \ref{IrreRepSRSHA}.  Next
we construct the element $b.$  Let
$h\in C(X_j)$ be such that $h(x)=1$ and $h$ vanishes on $X_j^{(0)},$  let
$\xi\in \KO$ be non-zero, and let $f=h\xi.$  Then $(0, \ldots, 0, f)\in
A^{(j)}.$  Choose $b'\in A^{(k-1)}$ such that the first $j$ entries of $b'$ are $(0,
\ldots, 0, f).$ Let $c=\phi_k(b').$  Let $V$ be an open neighborhood of
$X_k^{(0)}$ that does not contain $y,$ and choose $h'\in C(X_k)$ such that
$h'|_{X_k^{(0)}}=1$ and $h'$ vanishes outside of $V.$  Let $c'$ be any
extension of $c$ over $X_k,$ and let $f'=h'c'.$  Then
$f'|_{X_k^{(0)}}=c=\phi_k(b').$  So $(b', f')\in A^{(k)}.$  Choose $b\in A$
such that the first $k$ entries of $b$ are $(b', f').$  Then
$b(x)=f(x)=\xi\neq 0,$ and $b(y)=f'(y)=h'(y)c'(y)=0.$  

Now suppose that $x, y\in X_k\setminus X_k^{(0)}.$  Let $U_x$ and $U_y$ be
two disjoint open sets contained in $X_k\setminus X_k^{(0)}$ such that
$x\in U_x$ and $y\in U_y.$  Choose $h_x\in C(X_k)$ and $h_y\in C(X_k)$ such
that $h_x(x)=1$ and $h_y(y)=1,$ $h_x$ vanishes outside of $U_x,$ and 
$h_y$ vanishes outside of $U_y.$  Let $\xi\in \KO$ be non-zero.  Let
$f_x=h_x\xi,$ and $f_y=h_y\xi.$  Then $a'=(0, \ldots, f_y)\in A^{(k)}$ and
$b'=(0, \ldots, 0, f_x)\in A^{(k)}.$  Let $a, b\in A$ be such that the
first $k$ entries of $a$ and $b$ are, respectively, $a'$ and $b'.$  Then
\begin{align*}
&a(x)=a'(x)=f_y(x)=0,\\
&a(y)=a'(y)=f_y(y)=\xi\neq 0,\\
&b(x)=b'(x)=f_x(x)=\xi\neq 0,\\
&b(y)=b'(y)=f_x(y)=0.
\end{align*}
\end{proof}

\begin{cor}
\label{SRSHAIdealClosedSet}
Let $n$ be a positive integer.  Let $$\left(X_1, A^{(1)}, \left(X_k, X_k^{(0)},
\psi_k, R_k, A^{(k)}\right)_{k=2}^n\right)$$ be a stable recursive sub-homogeneous
system, and let $A=A^{(n)}.$  Let $X_1^{(0)}=\varnothing.$  Let $I\subseteq A$ be
a closed two sided ideal of $A.$  Then there exists a closed set $F\subseteq
X=\bigsqcup_{k=1}^n X_k$ such that $I=\{a\in A\colon  a|_F=0\}.$
\end{cor}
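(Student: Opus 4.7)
The plan is to realize $I$ as the intersection of the primitive ideals containing it, translate that intersection into a vanishing condition on a subset of $X$ via Lemma \ref{IrreRepSRSHA}, and then close up that subset in $X$ using continuity.

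First I would use the standard $\Cstar$-algebra fact that $I = \bigcap\{P\in\Prim(A): I\subseteq P\}$. By Lemma \ref{IrreRepSRSHA}(1), every primitive ideal of $A$ has the form $M(x) = \{a\in A: a(x)=0\}$ for a unique $x\in E_0 := \bigsqcup_{k=1}^n (X_k\setminus X_k^{(0)})$. Define
\[
E = \{x\in E_0 : a(x)=0 \text{ for all } a\in I\} = \{x\in E_0 : I\subseteq M(x)\}.
\]
Then the bijectivity in Lemma \ref{IrreRepSRSHA}(1) gives $\{P\in\Prim(A): I\subseteq P\} = \{M(x): x\in E\}$, so
\[
I = \bigcap_{x\in E} M(x) = \{a\in A : a(x)=0 \text{ for all } x\in E\}.
\]

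Next I would set $F = \overline{E}$, the closure taken inside $X = \bigsqcup_{k=1}^n X_k$ with the disjoint union topology. Since each $X_k$ is clopen in $X$, we have $F = \bigsqcup_{k=1}^n \overline{E\cap X_k}^{\,X_k}$, so $F$ is indeed closed in $X$. The inclusion $\{a\in A: a|_F=0\}\subseteq \{a\in A: a|_E=0\} = I$ is immediate from $E\subseteq F$. For the reverse inclusion, fix $a=(a_1,\ldots,a_n)\in I$. For each $k$, the component $a_k\in C(X_k,\KO)$ is continuous, and by definition of $E$ it vanishes on $E\cap X_k$; continuity then forces $a_k$ to vanish on the closure $\overline{E\cap X_k}$ in $X_k$, which is $F\cap X_k$. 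Hence $a|_F=0$, giving $I = \{a\in A: a|_F=0\}$ as required.

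There is no real obstacle here; the only subtlety is making sure the closure is taken in the correct ambient space. Because $X$ is a disjoint union, closure is computed stratum by stratum, and the continuity of each component $a_k$ on $X_k$ is what upgrades ``vanishing on $E$'' to ``vanishing on $F$.'' The heavy lifting has already been done in Lemma \ref{IrreRepSRSHA}, which identifies $\Prim(A)$ with $E_0$; after that, this corollary is essentially just the general fact that every closed ideal in a $\Cstar$-algebra is a hull of primitive ideals, repackaged in the coordinates provided by the SRSH decomposition.
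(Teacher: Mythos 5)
Your proof is correct and follows essentially the same route as the paper: both invoke $I=\ker(\mathrm{hull}(I))$, use Lemma \ref{IrreRepSRSHA} to identify $\mathrm{hull}(I)$ with a subset $E$ of $\bigsqcup_k (X_k\setminus X_k^{(0)})$, take $F=\overline{E}$, and upgrade vanishing on $E$ to vanishing on $F$ by continuity. The only cosmetic difference is that the paper disposes of the cases $I=0$ and $I=A$ separately, whereas your uniform treatment (with the empty-intersection convention) covers them automatically.
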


\begin{proof}
Let $I$ be a closed two sided ideal of $A.$   If $I=0,$ then take
$F=X.$ If $I=A,$ then take $F=\varnothing.$
Now assume that $I$ is proper and non-zero. Recall that for any $\Cstar$-algebra $B$ 
and for any closed two sided ideal $I$ of $B$, the hull of $I$, denoted by $\text{hull}(I)$, 
is the set of all primitive ideals of $B$ that contain $I$; and for any subset $S\subseteq \Prim(B)$, 
the kernel of $S$, denoted by $\ker(S)$ is the intersection of all the members of $S$.
We know that $I=\ker(\text{hull}(I)).$  
Let $M$ be as in Lemma \ref{IrreRepSRSHA}.
Let 
$F=\overline{M^{-1}(\text{hull}(I))}.$
We will verify that $I=\{a\in A\colon a|_F=0\}.$
Let $J$ denote $\{a\in A\colon a|_F=0\}.$

Let $a\in I,$ and let $x\in
M^{-1}(\text{hull}(I)).$  Then $M(x)\in \text{hull}(I),$ and so
$a\in I\subseteq M(x).$  So $a(x)=0.$  This holds for all $x\in
M^{-1}(\text{hull}(I)).$  Thus $a$ vanishes on
$M^{-1}(\text{hull}(I)).$  Since $a$ is continuous,
$a|_F=0.$  So $a\in J,$ and so 
$I\subseteq J.$ 
Now suppose that $a\in J.$  Let $L\in \text{hull}(I).$
Then there exists $x\in X$ such that
$L=M(x),$ and so $x\in
M^{-1}(\text{hull}(I))\subseteq F.$  The condition $a\in
J$ implies that $a(x)=0,$ which implies that $a\in
M(x)=L.$  This holds for all $L\in \text{hull}(I),$ so 
$a\in \ker(\text{hull}(I))=I.$  Thus $J\subseteq I,$ and so $I=J.$
\end{proof}

The next theorem is a restatement of Theorem 1.4.4 in \cite{ArvesonText}.
\begin{thm}
\label{RepSubAlgOfK}
Let $H$ be an arbitrary Hilbert space, and let $A\subseteq K(H)$ be a
non-zero $\Cstar$-subalgebra.  Then there exists an index set $I$ and a
family $(p_i)_{i\in I}$ of mutually orthogonal projections in $B(H),$
indexed by $I$, such that
\begin{enumerate}
\item $p_i\in A'$ for all $i\in I,$ where $A'$ denotes the commutant of $A;$
\item $p_iAp_i=K(p_iH)$ for all $i\in I$ (we identify $K(p_iH)$  with
$p_iK(H)p_i$ in an obvious way);
\item $\|a\|=\sup_{i\in I} \|p_iap_i\|$ for all $a\in A;$
\item $\sum_{i\in I} p_iap_i$ converges to $a$ in norm for all $a\in A;$
\item for all $a\in A$ and for all $\epsilon>0,$ there exists a finite
subset $F\subseteq I$ such that $\|p_iap_i\|<\epsilon$ for all $i\notin F.$
\end{enumerate}
\end{thm}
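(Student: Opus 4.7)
The plan is to assemble $(p_i)$ by repeatedly extracting irreducible sub-representations and exhausting via Zorn's lemma. First I would show that any non-zero $\Cstar$-subalgebra $A\subseteq K(H)$ admits a non-zero projection $p\in A'$ with $pAp=K(pH)$. Then a Zorn argument yields a maximal mutually orthogonal family of such projections; maximality forces $\sum_i p_i$ to be the projection onto $\overline{AH}$; finally (3)--(5) follow from compactness of elements of $A$.

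\textbf{Extracting one irreducible block.} The block is constructed in two substeps. First, I would produce a projection $e_0\in A$ satisfying $e_0Ae_0=\K e_0$. Pick any non-zero self-adjoint $a\in A$. Since $a$ is compact, $\Sp(a)$ has $0$ as its only possible accumulation point, so for a non-zero eigenvalue $\lambda$ the characteristic function $\chi_{\{\lambda\}}$ is a uniform limit on $\Sp(a)$ of continuous functions vanishing near $0$; hence the finite-rank spectral projection $e=\chi_{\{\lambda\}}(a)$ lies in $C^*(a)\subseteq A$. The finite-dimensional $\Cstar$-algebra $eAe\subseteq B(eH)$ contains a minimal projection $e_0$, and $e_0\le e$ yields $e_0Ae_0=e_0(eAe)e_0=\K e_0$. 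Second, fix a unit vector $\xi\in e_0 H$ and set $V=\overline{A\xi}$; I claim the action of $A$ on $V$ is topologically irreducible. Equivalently, the vector state $\omega_\xi(a)=\langle a\xi,\xi\rangle$ is pure: if $\omega_\xi=t\omega_1+(1-t)\omega_2$ with $0<t<1$ and $\omega_i$ states, then $\omega_i(e_0)=1$, and Cauchy--Schwarz applied in the unitization to $1-e_0$ forces $\omega_i(a)=\omega_i(e_0 a e_0)$; since $e_0 a e_0\in\K e_0$ this gives $\omega_1=\omega_2=\omega_\xi$. The GNS representation of a pure state is irreducible and is unitarily equivalent to $A$ acting on $\overline{A\xi}=V$, so $A$ acts irreducibly on $V$. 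Let $p$ project onto $V$; then $p\in A'$ (since $V$ and $V^\perp$ are $A$-invariant), and $pAp\subseteq K(pH)$ is a non-zero $\Cstar$-algebra acting irreducibly on $pH$, whence $pAp=K(pH)$ by the classical theorem that an irreducibly-acting non-zero $\Cstar$-subalgebra of $K(pH)$ coincides with $K(pH)$.

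\textbf{Zorn exhaustion and norm identities.} Partially order, by inclusion, the families of mutually orthogonal non-zero projections in $A'$ satisfying $p_iAp_i=K(p_iH)$; a maximal family $(p_i)_{i\in I}$ exists by Zorn. Set $q=1-\sum_i p_i$ (strong limit). Then $q\in A'$, and if $qAq\ne 0$, the first step applied inside $B(qH)$ produces an additional projection $p\le q$ with $p\in (qAq)'\cap B(qH)$; one checks $p\in A'$ globally using $aq=qa=qaq$ for $a\in A$, and $pAp=p(qAq)p=K(pH)$, contradicting maximality. Hence $qAq=0$, which combined with $q\in A'$ gives $qA=Aq=0$, so $\sum_i p_i$ is the projection onto $\overline{AH}$; this settles (1) and (2). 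For (3)--(5), fix $a\in A$ and $\epsilon>0$. Orthogonality of the $p_i$ together with $p_i\in A'$ gives $p_ia=ap_i=p_iap_i$ and $\|\sum_{i\in F}p_iap_i\|=\max_{i\in F}\|p_iap_i\|$ for any finite $F\subseteq I$. Compactness of $a$ together with orthogonality of the subspaces $p_iH$ forces $\{i:\|p_iap_i\|\ge\epsilon\}$ to be finite --- otherwise unit vectors $\xi_i\in p_iH$ with $\|a\xi_i\|\ge\epsilon$ would produce a bounded sequence whose images lie in pairwise orthogonal subspaces of $\Ima a$ and admit no Cauchy subsequence, contradicting compactness of $a$. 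This is (5); combined with $a=(\sum_i p_i)a=\sum_i p_iap_i$ (valid because $1-\sum p_i$ annihilates $A$), (5) upgrades the strong sum to a norm sum, giving (4); and (3) follows from the finite-$F$ norm identity.

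The decisive and least routine step is the first one, and its two engines are (i) the fact that finite-rank spectral projections of compact self-adjoint elements already live inside the norm-closed $\Cstar$-algebra they generate, so $e$ genuinely belongs to $A$, and (ii) the fact that a minimal projection $e_0\in A$ automatically produces pure vector states through the identity $e_0Ae_0=\K e_0$. These two features, available precisely because $A\subseteq K(H)$, are what make the block decomposition go through; everything downstream is a routine combination of Zorn's lemma with the definition of compactness.
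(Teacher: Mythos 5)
Your argument is correct. The paper does not prove this theorem at all --- it states it as a restatement of Theorem 1.4.4 in Arveson's \emph{An Invitation to $\Cstar$-Algebras} --- and your proof is essentially a faithful reconstruction of Arveson's own argument: extract a projection $e_0$ with $e_0Ae_0=\K e_0$ from the spectral projections of a compact self-adjoint element, show the cyclic subspace $\overline{A\xi}$ for $\xi\in e_0H$ carries an irreducible subrepresentation via purity of the vector state, exhaust by Zorn's lemma, and deduce the norm statements from compactness.
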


\begin{prop}
\label{CharaterOfKMapsFromSRSHA}
Let $H$ be a separable infinite dimension Hilbert space and let $\KO$ denote the 
set of all compact operators on $H$.
Let 
$$\left(X_1, A^{(1)}, \left(X_k, X_k^{(0)}, \phi_k, R_k, A^{(k)}\right)_{k=2}^n\right)$$ be a
SRSH system whose underlying Hilbert space is $H$.
 Let $A=A^{(n)}.$  Let $X_1^{(0)}=\varnothing.$  Let
\mbox{$\phi\colon A\rightarrow K(H)$} be a non-zero 
*-homomorphism.  Then there exists an index
set $I,$ a family $(p_i)_{i\in I}$ of mutually orthogonal projections
in $B(H),$  a family $(w_i)_{i\in I}$ of isometries  in
$B(H),$ and a family $(x_i)_{i\in I}$ of elements  in
$\bigsqcup_{k=1}^n (X_k\setminus X_k^{(0)})$ (note that we do not assume that
the $x_i$ are mutually distinct) such that
\begin{enumerate}
\item $p_i\in \phi(A)'$ for all $i\in I,$ where $\phi(A)'$ denotes the commutant of
$\phi(A);$
\item $w_i^*w_i=1$ and  $w_iw_i^*=p_i$ for all $i\in I;$
\item $\phi(a)=\sum_{i\in I} w_ia(x_i)w_i^*$ for all $a\in A,$ where the
convergence is in norm;
\item $\|\phi(a)\|=\sup_{i\in i}\|a(x_i)\|$ for all $a\in A;$ 
\item $I$ is a finite set.
\end{enumerate}
\end{prop}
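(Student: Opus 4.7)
The plan is to apply Arveson's Theorem \ref{RepSubAlgOfK} to the $\Cstar$-subalgebra $\phi(A)\subseteq K(H)$, identify each resulting orthogonal summand via the classification of irreducible representations given by Lemma \ref{IrreRepSRSHA}, and finally force finiteness of the index set by a compactness argument. Applying Theorem \ref{RepSubAlgOfK} produces non-zero mutually orthogonal projections $(p_i)_{i\in I}\subseteq\phi(A)'$ with $p_i\phi(A)p_i=K(p_iH)$ and with the properties (3)--(5) of that theorem. Because $p_i$ commutes with $\phi(A)$, the compression $\pi_i\colon A\to K(p_iH)$, $a\mapsto p_i\phi(a)p_i$, is a surjective *-homomorphism, hence an irreducible representation of $A$; Lemma \ref{IrreRepSRSHA}(2) then supplies a point $x_i\in\bigsqcup_{k=1}^n(X_k\setminus X_k^{(0)})$ with $[\pi_i]=[{\mathrm{ev}}_{x_i}]$. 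Since ${\mathrm{ev}}_{x_i}$ surjects onto $\KO$, we must have $\dim p_iH=\aleph_0$, and a Hilbert space isomorphism $H\to p_iH$ implementing the unitary equivalence, viewed as an element $w_i\in B(H)$, satisfies $w_i^*w_i=1$, $w_iw_i^*=p_i$, and $p_i\phi(a)p_i=w_ia(x_i)w_i^*$ for every $a\in A$.

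With this setup, conclusions (1)--(4) of the proposition are straightforward: (1) and (2) hold by construction; (3) combines Theorem \ref{RepSubAlgOfK}(4) with the identity $p_i\phi(a)p_i=w_ia(x_i)w_i^*$; and (4) combines Theorem \ref{RepSubAlgOfK}(3) with $\|w_ia(x_i)w_i^*\|=\|a(x_i)\|$, which holds because $w_i^*w_i=1$.

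The genuine work is in (5), the finiteness of $I$. I plan to argue by contradiction using compactness of $X=\bigsqcup_{k=1}^n X_k$ together with the auxiliary claim that \emph{for every $y\in X$ there exists $f\in A$ with $f(y)\neq 0$}. When $y\in X_k\setminus X_k^{(0)}$ this element is constructed exactly as in the first half of the proof of Lemma \ref{IrreRepSRSHA}. When $y\in X_k^{(0)}$, the non-vanishing hypothesis on $\phi_k$ yields $g\in A^{(k-1)}$ with $\phi_k(g)(y)\neq 0$; Tietze's theorem (i.e.\ surjectivity of $R_k\colon C(X_k,\KO)\to C(X_k^{(0)},\KO)$) produces $b\in C(X_k,\KO)$ with $R_k(b)=\phi_k(g)$ so that $(g,b)\in A^{(k)}$, and then surjectivity of the projection $p_{n,k}\colon A\to A^{(k)}$ lifts this pair to the desired $f\in A$. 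Given the claim, assume $I$ is infinite, choose distinct indices $(i_m)_{m\geq 1}$, pass to a subsequence with $x_{i_m}\to y_*\in X$ by compactness, and pick $f\in A$ with $f(y_*)\neq 0$. Since each $X_k$ is clopen in $X$, the points $x_{i_m}$ eventually lie in the same $X_k$ as $y_*$, so continuity of the $k$-th coordinate of $f$ gives $\|p_{i_m}\phi(f)p_{i_m}\|=\|f(x_{i_m})\|\to\|f(y_*)\|>0$ along the subsequence, contradicting Theorem \ref{RepSubAlgOfK}(5). The main obstacle is really the auxiliary claim for $y\in X_k^{(0)}$, which is the only place where the non-vanishing hypothesis on the attaching maps is used.
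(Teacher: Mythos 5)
Your proposal is correct and follows essentially the same route as the paper: apply Arveson's Theorem \ref{RepSubAlgOfK} to $\phi(A)$, identify each compression $p_i\phi(\cdot)p_i$ with a point evaluation via Lemma \ref{IrreRepSRSHA}, and obtain finiteness of $I$ by compactness of the total space together with the non-vanishing of the attaching maps. Your only deviation is cosmetic: you prove up front that every point of $X$ (including points of the $X_k^{(0)}$) admits a non-vanishing element and then contradict part (5) of Theorem \ref{RepSubAlgOfK} directly, whereas the paper first deduces $a(x_0)=0$ for all $a$ at the limit point and then contradicts the non-vanishing hypothesis -- the same ingredients in a different order.
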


\begin{proof}
It is clear that $\phi(A)$ is a non-zero $\Cstar$-subalgebra of $\KO.$  
Apply Theorem \ref{RepSubAlgOfK} to $\phi(A)$ to get the index set $I$ and
the family of mutually orthogonal projections $(p_i)_{i\in I}.$  Then part 1 of the
proposition  holds 
holds.  For each $i\in I,$ define $\phi_i\colon A\rightarrow K(p_iH)$ by
$\phi_i(a)=p_i\phi(a)p_i.$  
By part 1 of this proposition, $\phi_i$ is a well defined
*-homomorphism.  It is clear that $$\phi_i(A)=p_i\phi(A)p_i\subseteq
p_iK(H)p_i=K(p_iH).$$ Then part 2 of Theorem \ref{RepSubAlgOfK} implies that 
$\phi_i(A)=K(p_iH).$  Thus $(\phi_i, p_iH)$ is an irreducible
representation of $A.$  So by Lemma \ref{IrreRepSRSHA}, there exists a
unitary $w_i\colon H\rightarrow p_iH$ and some \mbox{$x_i\in
\bigsqcup_{k=1}^n(X_k\setminus X_k^{(0)})$} such that
$\phi_i(a)=w_ia(x_i)w_i^*$ for all $a\in A.$  Identifying $w_i$ as an element
of $B(H)$ in the obvious way (identify $w_i$ 
with the composition inclusion $p_iH\rightarrow H$ followed by $w_i$), 
the element $w_i$ is an isometry in $B(H).$
Then it is clear that part 2 of this proposition  holds.  By part 4 of 
Theorem \ref{RepSubAlgOfK}, we
have $$\phi(a)=\sum_{i\in I} p_i\phi(a)p_i=\sum_{i\in
I}\phi_i(a)=\sum_{i\in I} w_ia(x_i)w_i^*$$ for all $a\in A,$ where the
convergence is in norm.  So part 3 holds.  By part 3 of Theorem
\ref{RepSubAlgOfK}, we have  $$\|\phi(a)\|=\sup_{i\in I}
\|p_i\phi(a)p_i\|=\sup_{i\in I} \|\phi_i(a)\|=\sup_{i\in
I}\|w_ia(x_i)w_i^*\|=\sup_{i\in I}\|a(x_i)\|.$$  So 4 holds. 

 Finally we
show that $I$ is a finite set by contradiction. 
Suppose that $I$ is an infinite set.  Let $S$ denote the set $\{x_i\in X\colon 
i\in I\},$ where $X=\bigsqcup_{k=1}^n X_k.$  
We claim that there are distinct $i_l\in I$ for $l\in \N$ such that
$i_l\neq i_{l'}$ if $l\neq
l',$ and that the sequence $(x_{i_l})_{l=1}^\infty$ converges to some
$x_0\in X.$  To prove this claim, if $S$ is finite, then there exists some 
$y\in S$ such that
the set $\{i\in I\colon  x_i=y\}$ is infinite.  In this case take a sequence of 
mutually distinct indices $(i_l)_{l=1}^\infty$ in $\{i\in I\colon x_i=y\}.$
Then clearly $x_{i_l}=y\rightarrow y.$  If $S$ is infinite, then, since $X$ is
compact, we can  pick a countable mutually distinct
subset elements $y_1, y_2, \ldots \in \subseteq S$ such that $y_n\rightarrow x_0$
for some $x\in X.$ For each $l\geq 1,$ choose $i_l\in I$ such that
$x_{i_l}=y_l.$ Then the indices $i_1, i_2, \ldots$ are necessarily mutually
distinct, and $x_{i_l}=y_l\rightarrow x_0.$   This proves the claim.

Now we show that for all
$a\in A,$
$\|a(x_{i_l})\|\rightarrow 0.$
Let $a\in A,$ and let $\epsilon>0.$ By part 5 of Theorem 
\ref{RepSubAlgOfK}, there exists a finite subset $F\subseteq I$ such that
$i\notin F$ implies that
$$\|p_i\phi(a)p_i\|=\|\phi_i(a)\|=\|w_ia(x_i)w_i\|=\|a(x_i)\|<\epsilon.$$
Since $F$ is finite, there exists $l_0\geq 1$ such that if $l\geq l_0$ then
$i_l\notin F.$  Thus for all $l\geq l_0,$ we have 
$\|a(x_{i_l})\|<\epsilon.$  This shows that $\|a(x_{i_l})\|\rightarrow 0$
for all $a\in A.$

Since $a$ is continuous for all $a\in A,$ we have $a(x_0)=0$ for
all $a\in A.$  Then the map $A\rightarrow \KO$ defined by $a\mapsto a(x_0)$
is the zero map, hence $x_0\in \bigsqcup_{k=1}^n X_k^{(0)},$ because  
by Lemma \ref{IrreRepSRSHA}, for all
$y\in X\setminus \left(\bigsqcup_{k=1}^n X_k^{(0)}\right),$ the map $a\mapsto a(y)$ is an
irreducible representation and hence cannot be the zero map.
Suppose that $x_0\in X_k^{(0)}$ for some $k \in \{1, \ldots , n\}.$  Now, we
assumed that the map $\phi_k\colon A^{(k-1)}\rightarrow C(X_k^{(0)}, \KO)$ is
non-vanishing, so there exists some $b\in A^{(k-1)}$ such that
$\phi_k(b)(x_0)\neq 0.$  Then, since the map $A^{(n)}\rightarrow A^{(k-1)}$ 
defined
by $(a_1, \ldots, a_n)\mapsto (a_1, \ldots, a_{k-1})$ is surjective, there
exists some $a=(a_1, \ldots, a_n)\in A$ such that $(a_1, \ldots,
a_{k-1})=b.$  Thus  $$a(x_0)=R_k(a_k)(x_0)=\phi_k(b)(x_0)\neq 0.$$
This contradicts the fact that $a(x_0)=0$ for all $a\in A.$  This means that $I$ has
to be finite.
\end{proof}

\begin{defn}
\label{SpectrumOfHom}
Let $$\left(X_1, A^{(1)}, \left(X_k, X_k^{(0)}, \psi_k, R_k, A^{(k)}\right)_{k=2}^n\right)$$ 
be a SRSH system, and let $A=A^{(n)}.$  Let 
$\phi\colon A\rightarrow \KO$ be a non-zero *-homomorphism.  Then by Proposition
\ref{CharaterOfKMapsFromSRSHA}, there exists $x_1, \ldots, x_m\in
\bigsqcup_{k=1}^n (X_k\setminus X_k^{(0)})$ and isometries $w_1, \ldots, w_m$
with orthogonal ranges such that $\phi(a)=\sum_{i=1}^m w_ia(x_i)w_i^*$ for
all $a\in A.$  We call the set $\{x_1, \ldots, x_n\}$ (not counting multiplicity)
the {\emph{spectrum of
$\phi$}}, and we will denote the spectrum of $\phi$ by $\Sp(\phi).$
Let $$\left(Y_1, B^{(1)}, \left(Y_k, Y_k^{(k)}, \phi_k, Q_k, B^{(k)}\right)_{k=2}^m\right)$$ be 
another SRSH system, let $B=B^{(m)},$ and let $\phi\colon A\rightarrow B$ be a
*-homomorphism.  We say that $\phi$ is {\emph{non-vanishing}} if, for all $y\in
\bigsqcup_{k=1}^m Y_k,$ the map $A\rightarrow \KO$ defined by ${\mathrm{ev}}_y\circ\phi$
is not the zero map.  In this case, will call $\Sp({\mathrm{ev}}_y\circ\phi)$ the {\emph{
spectrum of $\phi$ at $y$}} and write $\Sp_y(\phi).$
\end{defn}

In the previous definition, it is not necessary to insist on $\phi$ being
non-vanishing to define
$\Sp_y(\phi).$  If ${\mathrm{ev}}_y\circ \phi=0$ for some $y,$ then $\Sp_y(\phi)$
would simply be the empty set.  The condition that $\phi$ is non-vanishing
guarantees that $\Sp_y(\phi)\neq\varnothing$ for all $y\in \bigsqcup_{i=1}^m
Y_i.$

The spectrum of a *-homomorphism between homogeneous algebras was used in 
\cite{DNNPStableRankReduction} to show that simple inductive
limits of homogeneous algebras with no dimension growth have topological stable 
rank one.
One of the key steps is that if the inductive limit is simple, then the
spectra
of the connecting *-homomorphisms of the inductive system, in a sense, become more 
and more ``dense'' when we follow
the connecting maps of the inductive limit further and further out.  We will prove a
similar result in our situation.  We will first need a few preliminary results, and some 
results that will be used later in this paper.

\begin{lem}
\label{CompoIsNon-Vanishing}
Let $$\left(X_1, A^{(1)}, \left(X_k, X_k^{(0)}, \phi_k, R_k, A^{(k)}\right)_{k=2}^n\right),$$ 
$$\left(Y_1, B^{(1)}, \left(Y_k, Y_k^{(0)}, \psi_k, T_k, B^{(k)}\right)_{k=2}^m\right),$$ and 
$$\left(Z_1, C^{(1)}, \left(Z_k, Z_k^{(0)}, \theta_k, S_k, C^{(k)}\right)_{k=2}^l\right)$$ be three
SRSH systems, and let $A=A^{(n)},$ $B=B^{(m)},$ and $C=C^{(l)}.$  Let
$\phi\colon A\rightarrow B$ and $\psi\colon B\rightarrow C$ be non-vanishing
*-homomorphisms.  Then $\psi\circ \phi$ is non-vanishing.
\end{lem}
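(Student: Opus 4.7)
The plan is to fix an arbitrary $z$ in the total space $\bigsqcup_{k=1}^l Z_k$ of $C$ and show that $\mathrm{ev}_z \circ \psi \circ \phi \colon A \to \KO$ is not the zero map, using Proposition \ref{CharaterOfKMapsFromSRSHA} to reduce non-vanishing at $z$ to non-vanishing at a collection of points in the total space of $B$.

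First, by the hypothesis that $\psi$ is non-vanishing, the composite $\mathrm{ev}_z \circ \psi \colon B \to \KO$ is a non-zero $*$-homomorphism. Therefore Proposition \ref{CharaterOfKMapsFromSRSHA} applies to it and produces a finite (non-empty) index set, a family of isometries $w_1, \ldots, w_r$ with mutually orthogonal range projections, and points $y_1, \ldots, y_r \in \bigsqcup_{k=1}^m (Y_k \setminus Y_k^{(0)})$ such that, for every $b \in B$,
\[
(\mathrm{ev}_z \circ \psi)(b) = \sum_{i=1}^r w_i\, b(y_i)\, w_i^*,
\qquad
\bigl\|(\mathrm{ev}_z \circ \psi)(b)\bigr\| = \max_{1 \le i \le r} \|b(y_i)\|.
\]

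Next I use that $\phi$ is non-vanishing. Since $y_1$ belongs to the total space of $B$, there exists some $a \in A$ with $\mathrm{ev}_{y_1}(\phi(a)) = \phi(a)(y_1) \neq 0$. Substituting $b = \phi(a)$ into the displayed norm identity gives
\[
\bigl\|(\mathrm{ev}_z \circ \psi \circ \phi)(a)\bigr\| = \max_{1 \le i \le r} \|\phi(a)(y_i)\| \ge \|\phi(a)(y_1)\| > 0,
\]
so $\mathrm{ev}_z \circ \psi \circ \phi$ is not the zero map. As $z$ was an arbitrary point of the total space of $C$, this shows that $\psi \circ \phi$ is non-vanishing.

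There is no real obstacle here: the only subtle point is making sure the decomposition produced by Proposition \ref{CharaterOfKMapsFromSRSHA} lands in $\bigsqcup (Y_k \setminus Y_k^{(0)})$, which is exactly what the proposition guarantees, and then using the orthogonality of the range projections of the $w_i$ (equivalently, part 4 of that proposition) to rule out cancellation in the sum $\sum_i w_i \phi(a)(y_i) w_i^*$.
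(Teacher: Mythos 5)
Your proof is correct and follows essentially the same route as the paper's: both decompose $\mathrm{ev}_z\circ\psi$ via Proposition \ref{CharaterOfKMapsFromSRSHA} into $\sum_i w_i\,(\cdot)(y_i)\,w_i^*$ over points $y_i$ of the total space of $B$, then apply non-vanishing of $\phi$ at $y_1$ and the resulting norm estimate $\|(\mathrm{ev}_z\circ\psi\circ\phi)(a)\|\geq\|\phi(a)(y_1)\|>0$. No gaps.
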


\begin{proof}
Let $z\in \bigsqcup_{i=1}^l Z_k.$  Since $\psi$ is non-vanishing, the map
${\mathrm{ev}}_z\circ \psi$ is non-zero.  So there exists $t\in \N$ with $t>0,$ and 
isometries
$w_1, \ldots, w_t,$ with orthogonal ranges
such that $\psi(b)(z)=\sum_{i=1}^t
w_ib(y_i)w_i^*$ for all $b\in B,$ where $\{y_1, \ldots,
y_t\}=\Sp_z(\psi)\neq\varnothing.$
Since $\phi$ is non-vanishing, there exists some $a\in A$ such that
$\phi(a)(y_1)\neq 0.$  Then $\|\psi(\phi(a))(z)\|\geq \|\phi(a)(y_1)\|>0,$ and hence
$\psi\circ \phi$ is non-vanishing.
\end{proof}

\begin{lem}
\label{SRSHAINon0dealOpenSet}
Let $n$ be a positive integer.  Let $$\left(X_1, A^{(1)}, \left(X_k, X_k^{(0)},
\phi_k, R_k, A^{(k)}\right)_{k=2}^n\right)$$ be a SRSH
system and let $A=A^{(n)}.$  Let $X_1^{(0)}=\varnothing$ and let
$X=\bigsqcup_{k=1}^n X_k.$  
\begin{enumerate}
\item 
Let $U\subseteq X$ be an open subset.  Then
$I_U=\{a\in
A\colon  a|_{U^c}=0\}$ is a closed two sided ideal of~$A.$  Further, let
$U_k=U\cap X_k$ for $k \in \{1, \ldots, n\},$ and let $$W_k=\left\{x\in X_k^{(0)}\colon 
\Sp_x(\phi_k)\cap \left(\bigsqcup_{i=1}^{k-1}U_i\right)\neq \varnothing\right\}$$ 
for each $k=2,
\ldots, n.$  Suppose that
\begin{align}
\label{AttachingSpectrumIntersection}
U\neq\varnothing \text{ and }
W_k=U_k\cap X_k^{(0)} \text{ for } k=2, \ldots, n.\tag*{(*)}
\end{align}
 Then $I_U\neq 0,$ and 
$$U=\{x\in X\colon  \text{there exists some }a\in I_U \text{ such that } a(x)\neq
0\}.$$
\item Let $I\subseteq A$ be a non-zero ideal. Then the set $$U=\{x\in X\colon 
\text{there exists some } a\in A \text{ such that } a(x)\neq 0\}$$ is open
in $X$ and satisfies
the condition \ref{AttachingSpectrumIntersection} in part 1. Also $I_U=I.$
\end{enumerate}
\end{lem}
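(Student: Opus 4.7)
The plan is to handle the two parts separately, with both resting on the interplay between ideal structure and attaching conditions encoded in \ref{AttachingSpectrumIntersection}. That $I_U$ is a closed two-sided ideal is routine: via the embedding $A\hookrightarrow \bigoplus_k C(X_k,\KO)$ the $\Cstar$-algebra operations are pointwise, so the condition $a|_{U^c}=0$ is preserved under addition, scalar multiplication, the $*$-operation, two-sided multiplication by elements of $A$, and norm limits.

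For the nontrivial assertion in part~1, the $\supseteq$ direction is immediate. For the reverse, I fix $x\in U$ and construct $a\in I_U$ with $a(x)\ne 0$ in two stages. \emph{Stage one: place a bump at the right level.} If $x\in U_k\setminus X_k^{(0)}$ for some $k$, use normality of $X_k$ to choose $a_k\in C(X_k,\KO)$ with $a_k(x)\ne 0$ that vanishes on the closed set $(X_k\setminus U_k)\cup X_k^{(0)}$, and set earlier components to zero, giving $(0,\ldots,0,a_k)\in A^{(k)}$. If instead $x\in U_k\cap X_k^{(0)}$, then $x\in W_k$ by \ref{AttachingSpectrumIntersection}, and Proposition \ref{CharaterOfKMapsFromSRSHA} supplies $x'\in\Sp_x(\phi_k)\cap U_j$ with $x'\in X_j\setminus X_j^{(0)}$ for some $j<k$; apply the previous case to $x'$ at level $j$, run the upward extension below up to level $k-1$, and note that the orthogonality of the isometries in the representation of $\mathrm{ev}_x\circ\phi_k$ forces $\phi_k(a_1,\ldots,a_{k-1})(x)\ne 0$, whence any Tietze extension of this value yields a valid $a_k$. \emph{Stage two: extend upward.} Given $(a_1,\ldots,a_{m-1})\in A^{(m-1)}$ with each $a_i$ vanishing on $X_i\setminus U_i$, put $g=\phi_m(a_1,\ldots,a_{m-1})\in C(X_m^{(0)},\KO)$. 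For $y\in X_m^{(0)}\setminus U_m$, \ref{AttachingSpectrumIntersection} gives $y\notin W_m$, so $\Sp_y(\phi_m)$ avoids $\bigsqcup_{i<m}U_i$; Proposition \ref{CharaterOfKMapsFromSRSHA} then forces $g(y)=0$ since each $a_i$ vanishes off $U_i$. Hence the piecewise definition $a_m=g$ on $X_m^{(0)}$ and $a_m=0$ on $X_m\setminus U_m$ is consistent on the closed set $X_m^{(0)}\cup(X_m\setminus U_m)$, and a vector-valued Tietze extension produces the required $a_m\in C(X_m,\KO)$. Iterating up to $m=n$ yields $a\in I_U$ with $a(x)\ne 0$.

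For part~2, openness of $U$ is immediate: any $a\in I$ is continuous on each $X_k$, hence on the disjoint union $X$, so $\{y:a(y)\ne 0\}$ is open. Condition \ref{AttachingSpectrumIntersection} follows from the formula $a(y)=\sum_i w_i a(x_i) w_i^*$ (for $y\in X_k^{(0)}$ and $x_i\in\Sp_y(\phi_k)$) of Proposition \ref{CharaterOfKMapsFromSRSHA}: the orthogonality of the ranges $w_iw_i^*$ makes $a(y)\ne 0$ equivalent to $a(x_i)\ne 0$ for some~$i$, and quantifying over $a\in I$ translates directly into $y\in U_k\cap X_k^{(0)}\iff y\in W_k$. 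For $I=I_U$, apply Corollary \ref{SRSHAIdealClosedSet} to obtain a closed $F\subseteq X$ with $I=\{a\in A:a|_F=0\}$; every $a\in I$ vanishes on $F$, so $F\subseteq U^c$, which gives $I_U\subseteq I$, while $I\subseteq I_U$ is immediate from the definition of $U$.

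The main obstacle is the upward extension in part~1: we need both that $g=\phi_m(a_1,\ldots,a_{m-1})$ vanishes on $X_m^{(0)}\setminus U_m$, so that the piecewise prescription patches continuously, and that $\KO$-valued Tietze extension is available on compact Hausdorff spaces. The first is precisely what \ref{AttachingSpectrumIntersection} buys us, and the second is standard for separable Banach targets like $\KO$.
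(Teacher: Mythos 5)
Your proposal is correct and follows essentially the same route as the paper: part 1 is the same level-by-level construction in which condition (*) guarantees that $\phi_m$ of an element supported in $\bigsqcup_{i<m}U_i$ vanishes on $X_m^{(0)}\setminus U_m$, so that it can be patched with zero off $U_m$ and extended (the paper organizes this as an induction on the length rather than an explicit iteration, which is only a cosmetic difference), and part 2 is the same argument via the spectral decomposition of $\mathrm{ev}_y\circ\phi_k$ and Corollary \ref{SRSHAIdealClosedSet}. The only omission is the trivial remark that $U\neq\varnothing$ in part 2, which is needed for condition (*) and follows immediately from $I\neq 0$.
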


\begin{proof}
For part 1, we induct on the length of the SRSH system.  If $n=1,$ then result is
trivial.  Suppose that result holds for systems of length $n,$ and let
$$\left(X_1, A^{(1)}, \left(X_k, X_k^{(0)}, \phi_k, R_k, A^{(k)}\right)_{k=2}^{n+1}\right)$$ be a
system of length $n+1.$  Let $U,$ $U_1, \ldots, U_{n+1}$ and $W_1, \ldots,
W_{n+1}$ be as given in the statement of the lemma.

It is clear that $I_U$ is a closed two sided ideal of $A.$  Let
$V=\bigsqcup_{k=1}^n U_k.$  First suppose that $V\neq \varnothing.$  Then by
the induction hypothesis, $J_V=\{a\in A^{(n)}\colon a|_{V^c}=0\}$ is a non-zero
ideal.  So let $b\in J_V$ be nonzero.  Now, for all $x\in
X_{n+1}^{(0)}\setminus W_{n+1},$ we have $\Sp_x(\phi_{n+1})\subseteq 
V^c.$  Since $b$ vanishes on $V^c,$ the function $\phi_{n+1}(b)$ also 
vanishes outside of $W_{n+1}.$  If
$W_{n+1}=\varnothing,$ then $\phi_{n+1}(b)=0.$ Thus $(b, 0)\in I_U$ and
$(b, 0)\neq 0.$  So assume that $W_{n+1}\neq\varnothing.$
Since $W_k$ is closed in $U_{n+1},$ we can extend
$\phi_{n+1}(b)$ to some $f\in C_0(U_{n+1}, \KO).$  Since $U_{n+1}\subseteq
X_{n+1}$ is
open,
we can define $f(x)=0$ for all $x\notin U_{n+1},$ so that $f\in C(X_{n+1},
\KO).$  Then $R_{n+1}(f)=\phi_{n+1}(b),$ and so $(b, f)\in I_U$ and $(b,
f)\neq 0.$  Thus $I_U\neq 0.$

Now suppose that $V=\varnothing.$  Then $W_{n+1}=\varnothing,$ and so
$U_{n+1}\subseteq X_{n+1}\setminus X_{n+1}^{(0)}.$  Since $U_{n+1}\neq\varnothing$
(otherwise $U=\varnothing$), there exists $f\in C(X_{n+1}, \KO)$ such that $f$
vanishes outside of $U_{n+1}$ and $f\neq 0.$  Then $(0, \ldots, 0, f)\in I_U$
and $(0, \ldots, 0, f)\neq 0.$  So $I_U\neq 0.$

It is clear that $$\{x\in X\colon  \text{there exists some }a\in I_U \text{ such
that } a(x)\neq 0\}\subseteq U.$$
Now let $x\in U.$  Let $k$ be the integer such that $x\in U_k.$  First
suppose that $1\leq k\leq n.$  Let $W=\bigsqcup_{i=1}^n U_i.$  Then by the
induction hypothesis, we have $$W=\{x\in X\colon  \text{there exists some
}a\in I_W\subseteq A^{(n)} \text{ such that } a(x)\neq 0\}.$$  So there
exists some $b\in I_W$ such that $b(x)\neq 0.$  An argument similar to
the one given in the second paragraph of this proof give some $f\in
C(X_{n+1}, \KO)$ such that $a=(b, f)\in I_U.$ Then $a(x)=b(x)\neq 0.$
Therefore
$$x\in \{y\in X\colon  \text{there exists some }a\in I_U \text{ such
that } a(y)\neq 0\}.$$

Now suppose that $k=n+1.$  Assume that $x\in X_{n+1}^{(0)}.$  Then $x\in
W_{n+1},$ which means that  there exists some $y\in \Sp_x(\phi_{n+1})\cap
\left(\bigsqcup_{i=1}^n U_i\right).$  By what is shown in the previous paragraph,
there exists some $a\in I_U$ such that $a(y)\neq 0.$  Then
$$\|a(x)\|=\sup_{z\in \Sp_x(\phi_{n+1})}\|a(z)\|\geq \|a(y)\|>0,$$ so
$a(x)\neq 0,$ and so $$x\in \{y\in X\colon  \text{there exists some }a\in I_U
\text{ such that } a(y)\neq 0\}.$$  Finally assume that $x\notin
X_{n+1}^{(0)}.$  Let $\xi\in \KO$ be non-zero and choose $h\in C(X_{n+1})$
such that $h(x)=1$ and  $h$ vanishes outside of $U_{n+1}\cap
(X_{n+1}\setminus X_{n+1}^{(0)}).$  Let $f=\xi h.$  Then $a=(0, \ldots, 0,
f)\in A,$ and $a$ vanishes outside of $U.$  So $a\in I_U,$ and
$a(x)=f(x)=\xi\neq 0.$  Therefore $$x\in \{y\in X\colon  \text{there exists some }a\in
I_U \text{ such that } a(y)\neq 0\}.$$  Thus $$U= \{x\in X\colon 
\text{there exists some }a\in I_U \text{ such that } a(x)\neq 0\}.$$

For part 2, we first  note that $U=\bigcup_{a\in I}\{x\in X\colon  a(x)\neq 0\}$ is open in
$X,$ and that
$U$ cannot be empty.  Let $U_1,
\ldots, U_{n+1}$ and $W_2, \ldots, W_n$ be as given in part 1.  Let $k\in \{2,
\ldots, n\}.$  Let $x\in W_k$ and let $y\in \Sp_x(\phi_k)\cap\left(\bigsqcup_{i=1}^{k-1}
U_i\right).$  Let $a\in I$ satisfy $a(y)\neq 0.$  
Then 
$$\|a(x)\|=\sup_{z\in \Sp_x(\phi_k)} \|a(z)\|\geq \|a(y)\|>0.$$  
Thus $a(x)\neq 0.$  So $x\in U_k,$ and
so $x\in U_k\cap X_k^{(0)}.$

Now suppose that $x\in U_k\cap X_k^{(0)}.$  Then $a(x)\neq 0$ for some $a\in I.$
Let $a=(b, g_1, \ldots, g_l),$ where $b\in A^{(k-1)}.$
Then $\|a(x)\|=\sup_{z\in \Sp_x(\phi_k)}\|b(z)\|.$  Now, since
$b$ vanishes outside of $\bigsqcup_{i=1}^{k-1} U_i,$ if $\Sp_x(\phi_k)\subseteq
\left(\bigsqcup_{i=1}^{k-1} U_i\right)^c,$ then $\|a(x)\|=0,$ and so $a(x)=0.$  Since
$a(x)\neq 0,$ we have $$\Sp_x(\phi_k)\cap \left(\bigsqcup_{i=1}^{k-1}
U_i\right)\neq\varnothing.$$  So $x\in W_k.$  Thus $W_k=U_k\cap X_k^{(0)}.$

It is clear that $I\subseteq I_U.$  Now we know that there exists some closed
subset $F\subseteq X$ such that $I=\{a\in A\colon a|_F=0\}.$  Since for all $x\in U,$
there exists some $a\in I$ such that $a(x)\neq 0,$ we have $F\subseteq
U^c.$  Then $a$ belonging to $I_U$ implies $a$ vanishes on $U^c,$ and so $a$ vanishes on
$F.$  So $a\in I.$  Thus $I_U\subseteq I,$ and hence $I=I_U.$
\end{proof}

\begin{lem}
\label{NonVanishingElement}
Let $$\left(X_1, A^{(1)}, \left(X_k, X_k^{(0)},
\phi_k, R_k, A^{(k)}\right)_{k=2}^n\right)$$ be a SRSH
system, and let $A=A^{(n)}.$  Let $X=\bigsqcup_{k=1}^n X_k.$ Then there exists
some $a\in A$ such that $a(x)\neq 0$ for all $x\in X.$
\end{lem}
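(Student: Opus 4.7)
My plan is to induct on the length $n$, but with the strengthened hypothesis that the element $a$ can be chosen to be pointwise positive, i.e., $a(x) \in \KO_+$ for all $x \in X$. The positivity is what makes the extension step in the induction tractable.

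For the base case $n = 1$, we have $A = C(X_1, \KO)$, and we simply take $a$ to be any nonzero positive constant. For the inductive step, given a system of length $n+1$, the induction hypothesis applied to the truncated system of length $n$ yields $b \in A^{(n)}$ that is pointwise positive and nowhere vanishing on $\bigsqcup_{k=1}^{n} X_k$. Set $c := \phi_{n+1}(b) \in C(X_{n+1}^{(0)}, \KO)$. Since $\phi_{n+1}$ is a $*$-homomorphism, $c$ is pointwise positive. To see that $c$ is nowhere vanishing on $X_{n+1}^{(0)}$, fix $x \in X_{n+1}^{(0)}$ and apply Proposition \ref{CharaterOfKMapsFromSRSHA} to the $*$-homomorphism $\mathrm{ev}_x \circ \phi_{n+1}\colon A^{(n)} \to \KO$, which is nonzero because $\phi_{n+1}$ is non-vanishing at $x$. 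This gives $c(x) = \sum_{i} w_i b(y_i) w_i^*$ with $y_i \in \bigsqcup_{k=1}^{n} (X_k \setminus X_k^{(0)})$ and isometries $w_i$ satisfying $w_i^* w_j = \delta_{ij}$. Each summand $w_i b(y_i) w_i^*$ is positive and nonzero (since $b(y_i) \neq 0$ and $w_i$ is an isometry), and the summands have pairwise orthogonal support projections $w_i w_i^*$, so their sum $c(x)$ is nonzero.

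The crux is then to extend $c$ to a continuous $f\colon X_{n+1} \to \KO_+$ which is nowhere vanishing and satisfies $f|_{X_{n+1}^{(0)}} = c$; given this, $a := (b, f)$ lies in $A^{(n+1)}$ (because $R_{n+1}(f) = c = \phi_{n+1}(b)$) and is positive and nowhere vanishing on the total space, completing the induction. To construct $f$, apply Tietze's theorem (for Banach-space valued continuous functions on a normal space) to get some continuous extension $\tilde c_0 \in C(X_{n+1}, \KO)$ of $c$. Set $\tilde c_1 := \tfrac{1}{2}(\tilde c_0 + \tilde c_0^*)$ and then $\tilde c_2(x) := (\tilde c_1(x))_+$. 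Continuity of the positive-part function on self-adjoints makes $\tilde c_2$ a continuous map into $\KO_+$, and since $c = c^* \geq 0$, the restriction $\tilde c_2|_{X_{n+1}^{(0)}}$ is still $c$. The closed set $F := \{x \in X_{n+1} : \tilde c_2(x) = 0\}$ is disjoint from $X_{n+1}^{(0)}$, so by Urysohn's lemma there is $h \in C(X_{n+1}, [0,1])$ with $h|_{X_{n+1}^{(0)}} = 0$ and $h|_F = 1$. Fixing any nonzero $\xi \in \KO_+$, the function $f := \tilde c_2 + h \xi$ satisfies $f|_{X_{n+1}^{(0)}} = c$, is positive, and is nowhere vanishing: on $F$ it equals $\xi$, and off $F$ we have $f \geq \tilde c_2 \neq 0$.

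The main obstacle is the extension step, and this is precisely why I strengthened the inductive statement to positivity. If one inducted only on the weaker property of being nonzero, then a natural Tietze-plus-perturbation formula like $\tilde c_0 + h \xi$ could vanish outside $X_{n+1}^{(0)}$ due to cancellation between $\tilde c_0(x)$ and $h(x)\xi$. With positivity in hand, two positive elements cannot cancel, and adding a positive $h \xi$ to the positive $\tilde c_2$ can only help, which makes the extension step clean.
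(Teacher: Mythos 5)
Your proposal is correct and follows essentially the same route as the paper: induct on the length, use positivity so that the extension over $X_{n+1}$ cannot vanish by cancellation, and patch the extended $\phi_{n+1}(b)$ with a positive bump supported away from $X_{n+1}^{(0)}$ (the paper uses a partition of unity where you use a Urysohn function, and squares the inductive element rather than carrying positivity in the inductive hypothesis, but these are cosmetic differences). Your explicit justification via Proposition \ref{CharaterOfKMapsFromSRSHA} that $\phi_{n+1}(b)$ is nowhere vanishing is actually slightly more careful than the paper's one-line assertion.
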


\begin{proof}
Induct on the length of the system.  The result clearly holds for $n=1.$
Suppose that result holds for systems of length $n,$  let
$$\left(X_1, A^{(1)}, \left(X_k, X_k^{(0)}, \phi_k, R_k, A^{(k)}\right)_{k=2}^{n+1}\right)$$ be a SRSH
system, and let $A=A^{(n+1)}.$  

Now, 
$$\left(X_1, A^{(1)}, \left(X_k, X_k^{(0)}, \phi_k, R_k, A^{(k)}\right)_{k=2}^n\right)$$ is a
system of length $n,$ so by inductive hypothesis, $A^{(n)}$ contains some
$a_0$ such that $a_0(x)\neq 0$ for all $x\in \bigsqcup_{k=1}^n X_k.$  Let
$a=a_0^*a_0.$  Then $a(x)\geq 0$ for all $x\in X,$ and
$a(x)\neq 0$ for all $x\in X.$  Let $b=\phi_{n+1}(a).$  Because $a$
vanishes nowhere, and because $\phi_{n+1}$ is non-vanishing, we have 
$b(x)\neq 0$ and $b(x)\geq 0$ for all $x\in X_{n+1}^{(0)}.$  Extend $b$ to
some positive element $b'\in C(X_{n+1}, \KO).$  Let $$U=\{x\in X_{n+1}\colon 
b'(x)\neq 0\}.$$  It is clear that $U$ is an open neighborhood of $X_{n+1}^{(0)}.$  Then
$\{U, X_{n+1}\setminus X_{n+1}^{(0)}\}$ is an open cover for $X_{n+1}.$  Let
$\{h_1, h_2\}$ be a partition of unity subordinate to $\{U,
X_{n+1}\setminus X_{n+1}^{(0)}\}.$  (Without loss of generality, assume that
$\supp h_1\subseteq U,$ and $\supp h_2\subseteq X_{n+1}\setminus
X_{n+1}^{(0)}$.)
Let $\xi\in
\KO$ be a non-zero positive element.  
Let $f=h_1b'+h_2\xi.$  Then if
$x\in X_{n+1}^{(0)},$ we have $$f(x)=h_1(x)b'(x)+h_2(x)\xi
=b'(x)=b(x)=\phi_{n+1}(a)(x).$$  Thus $(a, f)\in A.$  Now let $x\in
X_{n+1}.$  If $h_1(x)\neq 0,$ then $x\in U,$ and then $h_1(x)b'(x)\neq
0.$  Since $f(x)\geq h_1(x)b'(x),$ we have $f(x)\neq 0.$  If
$h_1(x)=0,$ then $h_2(x)=1,$ and so $h_2(x)\xi=\xi\neq 0.$  Since $f(x)\geq
h_2(x)\xi,$ we have $f(x)\neq 0.$  Thus $f$ vanishes nowhere.  Then
the element $(a, f)$ vanishes nowhere on $X.$  (That is $(a, f)$ is not
contained in any non-zero proper ideal of $A.$)
\end{proof}

The next proposition shows that in a simple inductive limit in which the connecting 
maps are injective and non-vanishing, the spectra of the connecting maps
become more and more dense, in some sense.  If $A$ is a set and if $B$ is a
subset of $A$, we use $B^c$ to denote the complement of $B$.

\begin{prop}
\label{SpectrumStructureSimplIndLimSRSHA}
Let $(A_n, \psi_n)$ be an inductive system of SRSHAs and let $A$ be the
inductive limit.  Let $X_n$ be the total space for $A_n.$
Suppose that $\psi_n$ is injective for all $n,$ 
that $\psi_n$ is non-vanishing for all $n,$ and that $A$ is
simple.  Then for all $n\geq
1,$ and for all open set $U\subseteq X_n$ such that $I_U=\{a\in
A_n\colon a|_{U^c}=0\}$ is a non-zero
ideal, there exists $n_0\geq n$ such that for all $k\geq n_0$ and for all
$x\in
X_k,$ we have $\Sp_x(\psi_{n, k})\cap U\neq \varnothing,$ where $\psi_{i,
j}=\psi_{j-1}\circ\cdots \circ\psi_{i+1}\circ\psi_i$ for $i\leq j.$
\end{prop}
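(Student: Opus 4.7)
The plan is a proof by contradiction. If the conclusion fails, there is a strictly increasing sequence $(k_l)_{l\geq 1}$ with $k_l\geq n$ and points $x_l\in X_{k_l}$ satisfying $\Sp_{x_l}(\psi_{n,k_l})\subseteq U^c$. Applying Proposition \ref{CharaterOfKMapsFromSRSHA} to the *-homomorphism ${\mathrm{ev}}_{x_l}\circ\psi_{n,k_l}\colon A_n\to\KO$, any $a\in I_U$ satisfies
\[
\psi_{n,k_l}(a)(x_l)=\sum_i w_i\,a(y_i)\,w_i^*=0,
\]
since each $y_i\in\Sp_{x_l}(\psi_{n,k_l})\subseteq U^c$ and $a|_{U^c}=0$. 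Because point evaluation is a *-homomorphism, every element of the closed two-sided ideal $J_{k_l}\subseteq A_{k_l}$ generated by $\psi_{n,k_l}(I_U)$ also vanishes at $x_l$.

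To produce a uniform lower bound, apply Lemma \ref{NonVanishingElement} to $A_n$ to obtain $e_0\in A_n$ with $e_0(x)\neq 0$ for all $x\in X_n$, and set $e=e_0^*e_0$. The function $x\mapsto\|e(x)\|$ is continuous on each base space and strictly positive everywhere, so compactness of $X_n$ gives $\delta:=\inf_{x\in X_n}\|e(x)\|>0$. By Lemma \ref{CompoIsNon-Vanishing} (iterated) every $\psi_{n,k}$ is non-vanishing, so for each $y\in X_k$ the set $\Sp_y(\psi_{n,k})$ is non-empty, and part 4 of Proposition \ref{CharaterOfKMapsFromSRSHA} yields
\[
\|\psi_{n,k}(e)(y)\|=\sup\{\|e(z)\|\colon z\in\Sp_y(\psi_{n,k})\}\geq\delta.
\]
In particular $\|\psi_{n,k_l}(e)(x_l)\|\geq\delta$ for every $l$.

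Finally, invoke simplicity. Let $\psi_{n,\infty}\colon A_n\to A$ denote the canonical map, and let $J$ be the closed two-sided ideal of $A$ generated by $\psi_{n,\infty}(I_U)$. Since the $\psi_n$ are injective, so is $\psi_{n,\infty}$; hence $J\neq 0$, and simplicity forces $J=A$. Thus $\psi_{n,\infty}(e)\in J$, and because $J$ is the closure of $\bigcup_{k\geq n}\psi_{k,\infty}(J_k)$ (any generating product $c\,\psi_{n,\infty}(a)\,d$ with $a\in I_U$ is approximated by $\psi_{k,\infty}(c'\psi_{n,k}(a)d')$ for $k\geq n$ large), there exist $k_0\geq n$ and $b\in J_{k_0}$ with $\|\psi_{k_0,\infty}(b)-\psi_{n,\infty}(e)\|<\delta/2$; isometry of $\psi_{k_0,\infty}$ gives $\|b-\psi_{n,k_0}(e)\|<\delta/2$. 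Choose $l$ with $k_l\geq k_0$. Then $\psi_{k_0,k_l}(b)\in J_{k_l}$ vanishes at $x_l$ by the first paragraph, while
\[
\|\psi_{k_0,k_l}(b)-\psi_{n,k_l}(e)\|\leq\|b-\psi_{n,k_0}(e)\|<\delta/2.
\]
Evaluating at $x_l$ forces $\|\psi_{n,k_l}(e)(x_l)\|<\delta/2$, contradicting the lower bound $\delta$.

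The main obstacle is converting the qualitative statement ``$J=A$'' from simplicity into a quantitative bound on the bad sequence $(x_l)$. The uniform positivity $\delta>0$, obtained by combining Lemma \ref{NonVanishingElement} with the block-diagonal description of ${\mathrm{ev}}_y\circ\psi_{n,k}$ afforded by Proposition \ref{CharaterOfKMapsFromSRSHA}, is precisely what turns the density supplied by simplicity into a contradiction; without such a uniform bound, the approximation $\|\psi_{k_0,k_l}(b)-\psi_{n,k_l}(e)\|<\delta/2$ would be compatible with the bad points persisting.
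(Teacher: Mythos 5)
Your proof is correct. It shares the overall strategy of the paper's argument --- contradiction, an ideal manufactured from $I_U$, simplicity, and a uniform lower bound $\delta>0$ coming from the nowhere-vanishing element of Lemma \ref{NonVanishingElement}, compactness of the total space, and the formula $\|\psi_{n,k}(e)(y)\|=\sup_{z\in\Sp_y(\psi_{n,k})}\|e(z)\|$ from Proposition \ref{CharaterOfKMapsFromSRSHA} --- but the middle of the argument is organized differently. The paper first replaces $U$ by the set $V$ of points where some element of $I_U$ is nonzero (so that Lemma \ref{SRSHAINon0dealOpenSet} gives $I_V=I_U$), takes for each $k$ the ideal $I_k$ of functions vanishing on the closure $F_k$ of the bad set $\{x\in X_k\colon \Sp_x(\psi_{1,k})\cap V=\varnothing\}$, and must then prove $\psi_k(I_k)\subseteq I_{k+1}$, which requires showing that badness of a point $x_0\in X_{k+1}$ propagates back to every point of $\Sp_{x_0}(\psi_k)$; the resulting ideal $J=\overline{\bigcup_k \psi^k(I_k)}$ is shown to be nonzero and, via the $\delta$-estimate, proper, contradicting simplicity. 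You instead take $J$ to be the ideal of $A$ generated by $\psi^n(I_U)$, so that $J=A$ is immediate from simplicity, and you defer the $\delta$-estimate to the end: every element of the locally generated ideal $J_{k_l}\subseteq A_{k_l}$ is killed by $\mathrm{ev}_{x_l}$ because $\Sp_{x_l}(\psi_{n,k_l})\subseteq U^c$, so the approximation $\|\psi_{k_0,k_l}(b)-\psi_{n,k_l}(e)\|<\delta/2$ is incompatible with $\|\psi_{n,k_l}(e)(x_l)\|\geq\delta$. This trades the paper's spectrum-propagation step and the $F_k$/$V$ bookkeeping for the standard fact that the ideal of an inductive limit generated by $\psi^n(I_U)$ is the closed increasing union of the images of the ideals generated by $\psi_{n,k}(I_U)$ in the $A_k$; both facts are routine, but your route is leaner and touches fewer of the paper's auxiliary lemmas. (If you write it up, do spell out the two small inclusions you use implicitly: $\psi_{k,k+1}(J_k)\subseteq J_{k+1}$, and that ${\mathrm{ev}}_{x_l}$, being a $*$-homomorphism annihilating the generating set, annihilates all of $J_{k_l}$.)
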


\begin{proof}
This will be a proof by contradiction.  Suppose that there exists $m\geq 1$
and some open set $U\subseteq X_m$ with $I_U\neq 0,$ such that  for all
$n\geq m,$ there exists some $k_n\geq n$ and some $x\in X_{k_n}$ such that
$\Sp_x(\psi_{m, k_n})\cap U=\varnothing.$  Then $U$ certainly cannot be the
entire space $X_n.$
Without loss of generality, we can 
assume that $k_n<k_{n+1}<k_{n+2}<\cdots.$  Then, passing to a subsequence
of the inductive system and truncating if necessary, we can assume that
$m=1,$ and that $k_n=n$ for all $n\geq 1.$  Thus we are assuming that there
exists some open subset $U\subseteq X_1$ with $I_U\neq 0$ such that for all
$n\geq 1,$ there exists some $x\in X_n$ such that $\Sp_x(\psi_{1, n})\cap
U=\varnothing.$  It is clear that $U\neq X_1.$ 

For each $n\geq 1,$ let $\psi^n\colon A_n\rightarrow A$ be the
natural injection that comes with the inductive limit.  
Also let 
$$V=\{x\in X_1\colon  \text{ there exists some } b\in I_U \text{ such that } b(x)\neq 0\}.$$
It is clear that $V\subseteq U.$ Then for all $n\geq 1,$ there exists some $x\in X_n$
such that $$\Sp_x(\psi_{1, n})\cap V\subseteq \Sp_x(\psi_{1, n})\cap U=\varnothing.$$
 By Lemma 
\ref{SRSHAINon0dealOpenSet}, we have $I_V=I_U\neq 0.$
 For each $n\geq 2,$
let $$F_n=\overline{\{x\in X_n\colon \Sp_x(\psi_{1, n})\cap V=\varnothing\}}.$$
Then
$F_n\neq \varnothing$ for all $n\geq 2.$
Let $I_n=\{a\in A_n\colon a|_{F_n}=0\}.$ Let $I_1=I_V.$ 
For each $n\geq 1,$ let
$J_n=\psi^n(I_n),$ and let $B_n=\psi^n(A_n).$  Then $J_n$ is a closed
two sided ideal of $B_n.$  We first show that $J_1\subseteq J_2\subseteq
J_3\subseteq\cdots.$  Fix $n\geq 1,$ and let $a\in I_n.$   Let $x_0\in
\{x\in X_{n+1}\colon  \Sp_x(\psi_{1, n+1})\cap V=\varnothing\}.$  
Let $y\in \Sp_{x_0}(\psi_n).$

Suppose that $\Sp_y(\psi_{1, n})\cap V\neq \varnothing.$  
Let $z\in \Sp_y(\psi_{1, n})\cap V,$ and let
$b\in I_1=I_V$ be such that $b(z)\neq 0.$  
Then $$\|\psi_{1,
n+1}(b)(x_0)\|=\|\psi_n(\psi_{1, n}(b))(x_0)\|\geq \|\psi_{1,
n}(b)(y)\|\geq \|b(z)\|>0.$$  But $b$ vanishes outside of $V,$ so if $x\in
X_{n+1}$ satisfies $\Sp_x(\psi_{1, n+1})\cap V=\varnothing,$ then 

$$\|\psi_{1, n+1}(b)(x)\|=\sup_{z'\in \Sp_x(\psi_{1, n+1})}\|b(z')\|=0;$$
hence
in particular $\psi_{1, n+1}(b)(x_0)=0.$  
 This contradicts the fact that
$\|\psi_{1, n+1}(b)(x_0)\|>0.$  Thus $\Sp_y(\psi_{1, n})\cap V=\varnothing.$

Then $y\in F_n,$ and so $a(y)=0.$  This holds for all $y\in
\Sp_{x_0}(\psi_n),$ so $\psi_n(a)(x_0)=0.$  This holds for all $x_0\in
X_{n+1}$ such that $\Sp_{x_0}(\psi_{1, n+1})\cap U=\varnothing,$ so
$\psi_n(a)|_{F_{n+1}}=0,$ and so $\psi_n(a)\in I_{n+1}.$  Then
$\psi^n(a)=\psi^{n+1}(\psi_n(a))\in \psi^{n+1}(I_{n+1})=J_{n+1}.$  This
holds for all $a\in I_n,$ so $J_n=\psi^n(I_n)\subseteq J_{n+1}.$
This holds for all $n\geq 1,$ so we have $J_1\subseteq J_2\subseteq
\cdots.$ 

Then $J=\overline{\bigcup_{n\geq 1}J_n}$ is an ideal of $A.$  The ideal 
$J$ cannot be 0,
because $\psi^1$ is injective and $I_1\neq 0.$  Finally we show that
$J\neq A.$  Let $a\in A_1$ satisfy $a(x)\neq 0$ for all $x\in X_1.$  Then
compactness of $X_1$ gives that there exists $\epsilon>0$ such that
$\|a(x)\|\geq \epsilon$ for all $x\in X_1.$  For all $n\geq 2$ and for all
$x\in X_n,$ we have $\|\psi_{1, n}(a)(x)\|=\sup_{y\in \Sp_x(\psi_{1,
n})}\|a(y)\|\geq \epsilon.$  For all $n\geq 2,$ and  for
all $b\in I_n,$ we have $$\|\psi_{1, n}(a)-b\|\geq \|\psi_{1,
n}(a)|_{F_n}-b|_{F_n}\|=\|\psi_{1, n}(a)|_{F_n}\|\geq \epsilon.$$  Then
for all $n\geq 1$ and for all $b\in I_n,$ we have
$$\|\psi^1(a)-\psi^n(b)\|=\|\psi^n(\psi_{1, n}(a))-\psi^n(b)\|=\|\psi_{1,
n}(a)-b\|\geq \epsilon.$$  Thus $\psi^1(a)\notin J.$  So $J\neq A.$  

This shows that $J$ is a non-zero proper ideal of $A,$ which
contradicts the simplicity of $A.$

\end{proof}

\section{Topological Stable Rank of Simple Inductive Limits of SRSHAs}

We begin this section by writing down some results about semi-continuity 
of spectral projections at 
self-adjoint elements in $\KO,$ which we will use later on. 
Then, through several lemmas, we adapt Lemma 3.3 in 
\cite{PhillipsStableRankRSHA}, which is the key lemma
in showing that simple inductive limits of RSHAs with no dimension growth have
topological stable rank one, to our situation. The last portion of the section will
be dedicated to showing that if $A$ is simple inductive limit of SRSHAs
with no dimension growth such that all the connecting maps are injective 
and non-vanishing, then $A$ has topological stable rank one.

\begin{lem}
\label{ShiftedFuncCalc}
Let $A$ be a $\Cstar$-algebra,  let $\widetilde A$ denote the unitization of
$A,$ and let $1$ be the adjoined identity.  (Here, we add a new identity to 
$A$ even if $A$ is already unital.) Let $a\in A$ be self-adjoint and
let
$\widetilde a=a+1.$ Then
\begin{enumerate}
\item $\Sp(a)+1=\Sp(\widetilde a)$ where both spectra are taken with respect to
$\widetilde A.$
\item Let $h\colon \Sp(\widetilde a )\rightarrow \Sp(a)$ be defined by $h(\xi)=\xi-1$
and let $h^*\colon C(\Sp(a))\rightarrow C(\Sp(\widetilde a))$ be defined by
$h^*(f)=f\circ h.$  Let $F\colon C(\Sp(a))\rightarrow \widetilde A$ and let $\widetilde
F\colon C(\Sp(\widetilde a))\rightarrow \widetilde A$ be the functional calculus (with
respect to $\widetilde A$) at $a$
and $\widetilde a$ respectively.  Then $F=\widetilde F\circ h^*.$
\end{enumerate}
\end{lem}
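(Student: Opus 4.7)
The plan is to treat the two parts of Lemma \ref{ShiftedFuncCalc} separately, with part 1 an essentially one-line computation about invertibility and part 2 an application of the uniqueness of the continuous functional calculus on a self-adjoint element.

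For part 1, I would fix $\lambda\in\K$ and observe that in $\widetilde A$ we have $\widetilde a-\lambda\cdot 1=(a+1)-\lambda\cdot 1=a-(\lambda-1)\cdot 1$. Thus $\widetilde a-\lambda$ is invertible in $\widetilde A$ if and only if $a-(\lambda-1)$ is invertible in $\widetilde A$, which by definition is equivalent to $\lambda-1\notin \Sp(a)$, i.e.\ $\lambda\notin \Sp(a)+1$. Taking complements of both sides gives $\Sp(\widetilde a)=\Sp(a)+1.$ In particular $h\colon\Sp(\widetilde a)\to \Sp(a)$, $h(\xi)=\xi-1$, is a well defined homeomorphism, so $h^*\colon C(\Sp(a))\to C(\Sp(\widetilde a))$ is a unital $*$-isomorphism.

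For part 2, I would use the characterization that $F$ is the unique unital $*$-homomorphism $C(\Sp(a))\to\widetilde A$ sending the coordinate function $\iota_a\colon \Sp(a)\to \K$, $\iota_a(t)=t$, to $a$. The composition $\widetilde F\circ h^*$ is a composition of unital $*$-homomorphisms, hence is itself a unital $*$-homomorphism $C(\Sp(a))\to \widetilde A$. I would then compute on the generator: $h^*(\iota_a)$ is the function $\eta\mapsto \eta-1$ on $\Sp(\widetilde a)$, which equals $\iota_{\widetilde a}-1\in C(\Sp(\widetilde a))$. Applying $\widetilde F$ gives $\widetilde F(\iota_{\widetilde a}-1)=\widetilde a-1=a$, since $\widetilde F$ sends $\iota_{\widetilde a}$ to $\widetilde a$ and the constant function $1$ to the unit of $\widetilde A$. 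Thus $\widetilde F\circ h^*$ also sends $\iota_a$ to $a$, and uniqueness of the functional calculus forces $F=\widetilde F\circ h^*$.

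There is no real obstacle here; the only thing to be careful about is to consistently take $\Sp(a)$ with respect to $\widetilde A$ (so that the functional calculus $F$ is well defined and the spectra in part 1 and part 2 match up), and to observe that $h$ is a homeomorphism, which makes $h^*$ a $*$-isomorphism and legitimizes its use inside the uniqueness argument.
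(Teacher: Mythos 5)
Your proof is correct and follows essentially the same route as the paper: part 1 is the immediate translation-of-spectrum computation, and part 2 reduces to standard functoriality of the continuous functional calculus. The paper phrases the key step as the composition rule $\widetilde F\circ h^*(f)=(f\circ h)(h^{-1}(a))=f(a)$ with $\widetilde a=h^{-1}(a)$, whereas you check agreement on the generator $\iota_a$ and invoke uniqueness of the unital $*$-homomorphism sending $\iota_a$ to $a$; these are two wordings of the same fact.
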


\begin{proof}
Part 1 is trivial. To prove part 2, note that $\widetilde a = h^{-1}(a)$.  Then
if $f \in C( \Sp (a)),$ we have 
\begin{align*}
\widetilde F \circ h^* (f) 
&= h^*(f) (\widetilde a) = h^*(f) ( h^{-1} (a) )\\
&= (f \circ h) ( h^{-1} (a) ) = (f \circ h \circ h^{-1}) (a) = f(a)
= F(f).
\end{align*} 
\end{proof}

 For all $\Cstar$-algebras $A$ and all $a\in A,$ we use
$|a|$ to denote $(a^*a)^{1/2}.$   
We use
$\chi_\alpha\colon \R \rightarrow \R$ to denote the characteristic function of 
$(-\infty, \alpha)$ for all $\alpha\in \R.$
Also, for all $\Cstar$-algebras $A$ and all  self-adjoint $a\in A$, we use
$p_\alpha(a)$ to denote $\chi_\alpha(a).$  Even though $p_\alpha(a)$ may
not be in $A$ for some combinations of $a,$ $A$ and $\alpha,$ it is still in
the double commutant of $A$ when $A$ is faithfully represented on a 
Hilbert space.  For our purposes, $A$ will be either the
algebras of compact
operators on separable Hilbert spaces, or their unitization; 
and $\alpha$ will be less then the limit point of $\Sp(a)$ (if any).  In
these cases $p_\alpha(a)$ will  be a finite rank projection, and hence in $A.$
Then the next corollary follows immediately from Lemma \ref{ShiftedFuncCalc}.

\begin{cor}
\label{ShiftedSpProj}
Let $a\in \KO_{s.a.},$ let $1>\alpha>0,$ and let $\widetilde a=a+1.$  Then
$p_\alpha(\widetilde a)=p_{\alpha-1}(a).$
\end{cor}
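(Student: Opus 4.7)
The plan is to deduce the identity directly from Lemma \ref{ShiftedFuncCalc} by applying it to the relevant characteristic functions, after first checking that these functions are in fact continuous on the appropriate spectra so that the lemma (stated for continuous functional calculus) really applies.

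First I would observe that since $a\in\KO$ is compact self-adjoint, the spectrum $\Sp(a)$ is a countable compact subset of $\R$ whose only possible accumulation point is $0$, and correspondingly $\Sp(\widetilde a)=\Sp(a)+1$ has only $\{1\}$ as a possible accumulation point. Because $0<\alpha<1$, the cutoff value $\alpha-1$ lies in $(-1,0)$, which is bounded away from the accumulation point $0$ of $\Sp(a)$. Hence if $\alpha-1$ happens to lie in $\Sp(a)$ it is an isolated eigenvalue, and $\chi_{\alpha-1}$ is identically $0$ on a neighborhood of $0$ intersected with $\Sp(a)$. A short check then shows $\chi_{\alpha-1}|_{\Sp(a)}$ is continuous, and by the same reasoning $\chi_{\alpha}|_{\Sp(\widetilde a)}$ is continuous. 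Thus both $p_{\alpha-1}(a)$ and $p_{\alpha}(\widetilde a)$ are honest (finite rank) elements of $\KO$ obtained from continuous functional calculus with respect to $\widetilde{\KO}$.

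Next I would set $f=\chi_{\alpha-1}|_{\Sp(a)}$ and compute $h^*(f)\in C(\Sp(\widetilde a))$, where $h(\xi)=\xi-1$ as in Lemma \ref{ShiftedFuncCalc}. A direct substitution gives $(f\circ h)(\xi)=\chi_{\alpha-1}(\xi-1)=1$ iff $\xi-1<\alpha-1$ iff $\xi<\alpha$, so $h^*(f)=\chi_{\alpha}|_{\Sp(\widetilde a)}$. Applying Lemma \ref{ShiftedFuncCalc} then yields
$$p_{\alpha-1}(a)=\chi_{\alpha-1}(a)=F(f)=\widetilde F(h^*(f))=\chi_{\alpha}(\widetilde a)=p_{\alpha}(\widetilde a),$$
which is the required identity.

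The only real obstacle is the continuity verification for the characteristic functions, and this is routine given the spectral structure of compact self-adjoint operators; the rest is a one-line substitution into the identity provided by the preceding lemma.
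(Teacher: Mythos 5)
Your argument is correct and is exactly the route the paper intends: the paper derives Corollary \ref{ShiftedSpProj} immediately from Lemma \ref{ShiftedFuncCalc} by applying the identity $F=\widetilde F\circ h^*$ to the characteristic function, just as you do. Your added verification that $\chi_{\alpha-1}|_{\Sp(a)}$ and $\chi_{\alpha}|_{\Sp(\widetilde a)}$ are continuous (because $\alpha-1<0<1$ keeps the cutoffs away from the only accumulation points of the respective spectra, so the relevant spectral sets are finite and clopen) is the one detail the paper leaves implicit, and you have handled it correctly.
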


\begin{lem}
\label{FunCalcOfTwoDirectSum}
Let $A$ be a unital $\Cstar$-algebra and let $p_1, p_2\in A$ be orthogonal
projections such that $p_1+p_2=1.$  Let $A_1$ and $A_2$ be
$\Cstar$-subalgebras of $A$ such that $p_i$ is the identity of $A_i$ for
$i=1, 2.$ Let $a_1\in A_1$ and $a_2\in A_2.$  
\begin{enumerate}
\item Then $\Sp_A(a_1+a_2)=\Sp_{A_1}(a_1)\cup \Sp_{A_2}(a_2),$ where $\Sp_B(b)$
denotes the spectrum of $b$ with respect to $B$ for all $\Cstar$-algebra
$B$ and any $b\in B.$
\item Suppose that $a_1$ and $a_2$ are self-adjoint.  
Let $F_i$ be the functional
calculus of $a_i$ with respect to $A_i,$ for $i=1, 2,$ and let $F$ be the
functional calculus of $a_1+a_2$ with respect to $A.$ 
Then for all $f\in C(\Sp_A(a_1+a_2)),$ we have $F(f)=F_1(f)+F_2(f),$ that is,
$f(a_1+a_2)=f(a_1)+f(a_2).$
\end{enumerate}
\end{lem}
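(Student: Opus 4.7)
The plan is to first establish part 1 by exploiting that $a_1$ and $a_2$ live in complementary corners cut out by the projections $p_1,p_2$, and then deduce part 2 by checking the formula on polynomials and extending by continuity.

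For part 1, I would begin by noting that since $p_i$ is the identity of $A_i$ we have $a_i=p_ia_ip_i$, so the cross products $a_1a_2$, $a_2a_1$, $a_1p_2$ and $a_2p_1$ all vanish. Writing $1_A=p_1+p_2$, this yields the block-diagonal decomposition
\begin{equation*}
\lambda\cdot 1_A-(a_1+a_2)=(\lambda p_1-a_1)+(\lambda p_2-a_2),\qquad \lambda\in\K.
\end{equation*}
If $b_i\in A_i$ inverts $\lambda p_i-a_i$ inside $A_i$, then using $p_1p_2=0$ and $b_i=p_ib_ip_i$ a direct check shows that $b_1+b_2$ inverts $\lambda\cdot 1_A-(a_1+a_2)$ in $A$. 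Conversely, if $c\in A$ is such an inverse, then $p_icp_i$ inverts $\lambda p_i-a_i$ inside the corner $p_iAp_i$. Combined with the standard fact that the spectrum of an element of a unital $\Cstar$-subalgebra agrees with its spectrum in the ambient algebra---so that $\Sp_{A_i}(a_i)=\Sp_{p_iAp_i}(a_i)$---this gives $\Sp_A(a_1+a_2)=\Sp_{A_1}(a_1)\cup\Sp_{A_2}(a_2)$.

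For part 2, the identities $a_1a_2=a_2a_1=0$ and $p_ia_j=0$ for $i\neq j$ yield $(a_1+a_2)^n=a_1^n+a_2^n$ for all $n\geq 1$, while $(a_1+a_2)^0=1_A=p_1+p_2$. Hence for every polynomial $q(t)=\sum_kc_kt^k$,
\begin{equation*}
q(a_1+a_2)=\Bigl(c_0p_1+\sum_{k\geq 1}c_ka_1^k\Bigr)+\Bigl(c_0p_2+\sum_{k\geq 1}c_ka_2^k\Bigr)=F_1\bigl(q|_{\Sp_{A_1}(a_1)}\bigr)+F_2\bigl(q|_{\Sp_{A_2}(a_2)}\bigr),
\end{equation*}
where the restrictions make sense because part 1 gives $\Sp_{A_i}(a_i)\subseteq\Sp_A(a_1+a_2)$. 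Since $\Sp_A(a_1+a_2)\subseteq\R$ is compact, the Stone--Weierstrass theorem makes polynomials dense in $C(\Sp_A(a_1+a_2))$, and the displayed identity extends to every continuous $f$ by continuity of $F$, $F_1$, $F_2$ and of the restriction maps onto the subsets $\Sp_{A_i}(a_i)$.

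The only subtlety I anticipate is the invariance of the spectrum under the passage between $A_i$ and the corner $p_iAp_i$ invoked in part 1; once this is granted, the remainder of the argument is bookkeeping with orthogonal idempotents together with the uniqueness of the continuous functional calculus for commuting self-adjoint elements.
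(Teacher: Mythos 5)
Your proposal is correct and follows essentially the same route as the paper: reduce part 1 to the corners $p_iAp_i$ via the block decomposition of $\lambda\cdot 1_A-(a_1+a_2)$ and invoke spectral permanence for the unital subalgebras $A_i\subseteq p_iAp_i$, then prove part 2 on polynomials and extend by density. Your treatment of the constant term via $1_A=p_1+p_2$ is in fact slightly more explicit than the paper's.
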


\begin{proof}
First assume that $A_i=p_iAp_i$ for $i=1, 2.$  Let $\lambda\in \K.$
If $\lambda-(a_1+a_2)$ is invertible in $A,$ then there exists some $b\in
A$ such that $b(\lambda-a_1-a_2)=(\lambda-a_1-a_2)b=1=p_1+p_2,$ and
$b$ commutes with $p_1$ and $p_2.$ So  
$p_1bp_1$ and $p_2bp_2$ are the inverses of $\lambda p_1-a_1$ and $\lambda
p_2-a_2$ in $A_1$ and $A_2,$ respectively, and so $\lambda p_1-a_1$ and
$\lambda p_2-a_2$ are both invertible.  On the other hand, if both
$\lambda p_1-a_1$ and $\lambda p_2-a_2$ are invertible, then there exists
$b_i\in A_i$ such that $b_i=(\lambda p_i-a_i)^{-1}$ for $i=1, 2.$  Then
$b_1+b_2=(\lambda-a_1-a_2)^{-1}.$  Thus $\lambda\notin \Sp_A(a_1+a_2)$ if
and only if $\lambda\notin \Sp_{A_1}(a_1)\cap \Sp_{A_2}(a_2).$  So result
follows.  
Now assume that $A_i$ is an arbitrary $\Cstar$-algebra of $A$ that contains
$p_i$ as its identity, for $i=1, 2.$  Then for $i=1, 2,$ $A_i$ is a
$\Cstar$-algebra of $p_iAp_i$ that contains the identity of $p_iAp_i,$ so
$\Sp_{p_iAp_i}(a_i)=\Sp_{A_i}(a_i).$  Thus  
$$\Sp_A(a_1+a_2)=\Sp_{p_1Ap_1}(a_1)\cup\Sp_{p_2Ap_2}(a_2)
=\Sp_{A_1}(a_1)\cup\Sp_{A_2}(a_2),$$ and part 1 or the lemma is proven.

Since $a_1a_2=a_2a_1=0,$ it is easy to verify that if $\pi$ is a
polynomial on $\Sp_{A}(a_1+a_2),$ then $\pi(a_1)+\pi(a_2)=\pi(a_1+a_2),$
where functional calculus on the left side of the equation is taken in the
subalgebras $A_i,$ $i=1, 2,$ and the functional calculus on the right side
of the equation is taken in $A.$  So the continuous map
$C(\Sp_A(a_1+a_2))\rightarrow A$ defined by $f\mapsto f(a_1)+f(a_2),$ where
the respective functional calculus is taken in the subalgebra, agrees with
the map $f\mapsto f(a_1+a_2)$  on the
set of all polynomials, which is dense in $C(\Sp_A(a_1+a_2)).$
Hence the result follows.
\end{proof}

From \ref{FunCalcOfTwoDirectSum}, a standard induction argument shows the
following:
\begin{cor}
\label{FunCalcOfMultipleDirectSum}
Let $A$ be a unital $\Cstar$-algebra, and let $p_1,\ldots, p_n\in A$ be orthogonal
projections such that $p_1+p_2+\cdots +p_n=1.$  Let $A_i$ be a
$\Cstar$-subalgebra of $A$ such that $p_i$ is the identity of $A_i$ for
$i=1, 2, \ldots, n.$ Let $a_i\in A_i,$ $i \in \{1, \ldots, n\}.$
\begin{enumerate}
\item Then $\Sp_A\left(\sum_{i=1}^nf
a_i\right)=\bigcup_{i=1}^n\Sp_{A_i}(a_i).$
\item Suppose that $a_i$ is self-adjoint for $i \in \{1, \ldots, n\}.$
Let $F_i$ be the functional
calculus of $a_i$ with respect to $A_i$ for $i \in \{1, \ldots, n\}$ 
and let $F$ be the
functional calculus of $\sum_{i=1}^na_i$ with respect to $A.$ 
Then for all $f\in C\left(\Sp_A\left(\sum_{i=1}^n
a_i\right)\right),$ 
we have $F(f)=\sum_{i=1}^n F_i(f),$ that is,
$f\left(\sum_{i=1}^n a_i\right)=\sum_{i=1}^n
f(a_i).$
\end{enumerate}
\end{cor}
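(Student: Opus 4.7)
The proof will be by induction on $n,$ with the base case $n = 2$ being exactly Lemma \ref{FunCalcOfTwoDirectSum}. (The case $n = 1$ is vacuous.) The main task is to set things up so that Lemma \ref{FunCalcOfTwoDirectSum} can be applied cleanly in the inductive step.

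Suppose the corollary holds for some $n \geq 2,$ and consider projections $p_1, \ldots, p_{n+1}$ satisfying the hypotheses, along with subalgebras $A_i$ and elements $a_i \in A_i.$ Set $q = p_1 + \cdots + p_n$ and $B = qAq.$ Then $q$ and $p_{n+1}$ are orthogonal projections in $A$ with $q + p_{n+1} = 1,$ and $B$ is a unital $\Cstar$-subalgebra of $A$ with identity $q,$ while $A_{n+1}$ has identity $p_{n+1}.$ Moreover, since each $p_i \leq q$ for $i \leq n,$ each $A_i$ is contained in $p_i A p_i \subseteq qAq = B,$ so the collection $(B, p_1, \ldots, p_n, A_1, \ldots, A_n, a_1, \ldots, a_n)$ satisfies the hypotheses of the inductive assertion with ambient algebra $B.$

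Set $b = \sum_{i=1}^n a_i \in B.$ The inductive hypothesis applied inside $B$ yields $\Sp_B(b) = \bigcup_{i=1}^n \Sp_{A_i}(a_i)$ and, when the $a_i$ are self-adjoint, $f(b) = \sum_{i=1}^n f(a_i)$ for all $f \in C(\Sp_B(b)),$ where the functional calculus of $b$ is taken in $B$ and of each $a_i$ in $A_i.$ Now apply Lemma \ref{FunCalcOfTwoDirectSum} to the ambient algebra $A$ with the two orthogonal projections $q, p_{n+1},$ the subalgebras $B, A_{n+1},$ and the elements $b, a_{n+1}.$ This gives
\[
\Sp_A\!\left(\sum_{i=1}^{n+1} a_i\right) = \Sp_A(b + a_{n+1}) = \Sp_B(b) \cup \Sp_{A_{n+1}}(a_{n+1}) = \bigcup_{i=1}^{n+1}\Sp_{A_i}(a_i),
\]
and for self-adjoint $a_i$ and $f$ continuous on this spectrum, $f(b + a_{n+1}) = f(b) + f(a_{n+1}) = \sum_{i=1}^{n+1} f(a_i),$ completing the induction.

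There is no real obstacle here: the only subtlety is making sure that when we invoke the inductive hypothesis we do so with the correct ambient algebra, namely $B = qAq,$ and that the spectrum of $b$ computed in $B$ coincides with what we need in order to apply Lemma \ref{FunCalcOfTwoDirectSum} in $A.$ This is precisely the content of the final paragraph of the proof of Lemma \ref{FunCalcOfTwoDirectSum} (the fact that $\Sp_{p_iAp_i}(a_i) = \Sp_{A_i}(a_i)$ when $A_i$ contains the identity of $p_iAp_i$), so no additional argument is required.
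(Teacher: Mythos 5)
Your proof is correct and is exactly the ``standard induction argument'' from Lemma \ref{FunCalcOfTwoDirectSum} that the paper invokes without writing out: grouping $p_1+\cdots+p_n$ into the corner $q A q$ and applying the two-summand lemma to the pair $(qAq, A_{n+1})$ is the intended argument. The care you take with the ambient algebra for the inductive hypothesis (and the observation that $\Sp_{p_iAp_i}(a_i)=\Sp_{A_i}(a_i)$ is already handled in the lemma) is exactly the right bookkeeping.
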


The next few results are about the semicontinuity of spectral projections.

\begin{lem}
\label{SemiContinuityOfSpectrumProjOfCptOp}
Let $\epsilon>0,$ let $0<\alpha_1<\alpha_2<1,$ and let $M\geq 1$ be a
real
number.  Then there exists some
$\delta>0$ such that if $a, b\in \KO_{s.a.},$ $\widetilde a=a+1,$ 
$\widetilde b=b+1,$ $\|\widetilde a\|\leq M,$ $\|\widetilde b\|\leq M,$ 
and $\|\widetilde a-\widetilde b\|<\delta,$ then 
$$\|p_{\alpha_1}(\widetilde a)p_{\alpha_2}(\widetilde
b)- p_{\alpha_1}(\widetilde a)\|<\epsilon$$
and 
$$\rank(p_{\alpha_1}(\widetilde a))\leq \rank(p_{\alpha_2}(\widetilde b)).$$
\end{lem}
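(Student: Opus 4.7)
The plan is to apply the continuous functional calculus to a suitable interpolating function. First I note that because $a, b \in \KO_{s.a.},$ each of $\widetilde a$ and $\widetilde b$ is a self-adjoint element whose spectrum is contained in $[-M,M]$ and accumulates only at $1.$ Since $\alpha_1 < \alpha_2 < 1,$ both $\pi_1 := p_{\alpha_1}(\widetilde a)$ and $\pi_2 := p_{\alpha_2}(\widetilde b)$ are therefore finite-rank projections onto the sum of the finitely many eigenspaces with eigenvalues strictly below $\alpha_1$ (respectively $\alpha_2$).

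Next I choose a continuous function $f\colon \R\to [0,1]$ with $f = 1$ on $(-\infty,\alpha_1]$ and $f = 0$ on $[\alpha_2,\infty).$ The pointwise identities $\chi_{\alpha_1}\cdot f = \chi_{\alpha_1}$ and $(1-\chi_{\alpha_2})\cdot f = 0$ translate through Borel functional calculus into $f(\widetilde a)\pi_1 = \pi_1$ and $(1-\pi_2)f(\widetilde b) = 0.$ Consequently
$$(1-\pi_2)\pi_1 \;=\; (1-\pi_2)f(\widetilde a)\pi_1 \;=\; (1-\pi_2)\bigl(f(\widetilde a)-f(\widetilde b)\bigr)\pi_1,$$
and passing to adjoints yields $\|\pi_1\pi_2 - \pi_1\| = \|(1-\pi_2)\pi_1\| \leq \|f(\widetilde a) - f(\widetilde b)\|.$

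I then invoke the standard uniform continuity of the continuous functional calculus on self-adjoint elements of norm at most $M$: approximating $f$ uniformly on $[-M,M]$ by a polynomial and using the obvious Lipschitz estimate for polynomial functional calculus, I can find $\delta > 0,$ depending only on $\epsilon,$ $f,$ and $M,$ such that $\|\widetilde a - \widetilde b\| < \delta$ forces $\|f(\widetilde a) - f(\widetilde b)\| < \min\{\epsilon,\tfrac12\}.$ The first conclusion of the lemma follows immediately. For the rank inequality I argue as follows: if $\xi$ lies in the range of $\pi_1$ and $\pi_2\xi = 0,$ then $\xi = \pi_1\xi - \pi_1\pi_2\xi,$ so $\|\xi\| \leq \|\pi_1 - \pi_1\pi_2\|\cdot\|\xi\| \leq \tfrac12\|\xi\|,$ forcing $\xi = 0.$ Hence $\pi_2$ restricted to the finite-dimensional range of $\pi_1$ is injective, which yields $\rank(\pi_1) \leq \rank(\pi_2).$

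I do not expect a genuine obstacle here; the only point requiring attention is that $\delta$ must be chosen uniformly in $\widetilde a$ and $\widetilde b$ subject to $\|\widetilde a\|,\|\widetilde b\| \leq M,$ but this uniformity is a standard consequence of the polynomial-approximation proof of continuity of the functional calculus and depends only on $f$ and $M.$
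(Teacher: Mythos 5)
Your proof is correct and follows essentially the same route as the paper's: both interpolate between $\chi_{\alpha_1}$ and $\chi_{\alpha_2}$ by a continuous function $f$, reduce the norm estimate to $\|f(\widetilde a)-f(\widetilde b)\|$, and obtain $\delta$ from the uniform (polynomial-approximation) continuity of the functional calculus on self-adjoint elements of norm at most $M$. The only cosmetic difference is that you prove the rank comparison directly, by showing $\pi_2$ is injective on the finite-dimensional range of $\pi_1$, whereas the paper invokes the standard fact that there is a $\sigma_0>0$ with $\|pq-q\|<\sigma_0$ forcing $\rank(q)\leq\rank(p)$.
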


\begin{proof}
We know that there exists a $\sigma_0>0$ such that 
if $p, q$ are projections in $\KO$ 
such that
$\|pq-q\|<\sigma_0,$ then $\rank(q)\leq \rank(p).$ Let
$\sigma=\min\{\epsilon, \sigma_0\}.$

Define $f\colon [-M, M]\rightarrow [0, 1]$ by 
$$f(t)=\begin{cases}
1&t\in [-M, \alpha_1]\\
\frac{\alpha_2-t}{\alpha_2-\alpha_1}&
t\in [\alpha_1, \alpha_2]\\
0&t\in [\alpha_2, M].
\end{cases}$$
Then it is clear that $f\in C([-M, M]).$  
Use functional calculus to obtain a positive real number $\delta$ such that if $A$ is any unital
$\Cstar$-algebra, and if $a, b \in A$ are self-adjoint elements with $\|a\| \leq M,$ $\|b\|
\leq M,$ and $\| a-b\| < \delta,$ then $\| f(a) - f(b) \| < \sigma/2.$
Let $a, b\in \KO_{s.a.},$  $\widetilde a=a+1,$ and
$\widetilde b=b+1.$  Then $\widetilde a, \widetilde b\in \widetilde \KO,$ which is unital.
Suppose that $\|\widetilde a\|\leq M,$ $\|\widetilde b\|\leq M,$ and that $\|\widetilde
a-\widetilde b\|<\delta.$  By the choice of $\delta,$ we have $\|f(\widetilde
a)-f(\widetilde b)\|<\sigma/2.$  Now, $\chi_{\alpha_1}f=\chi_{\alpha_1}$ 
and $\chi_{\alpha_2}f=f$ on
$[-M, M].$  Thus $p_{\alpha_1}(\widetilde a)f(\widetilde
a)=p_{\alpha_1}(\widetilde a),$ and $p_{\alpha_2}(\widetilde b)f(\widetilde
b)=f(\widetilde b).$
Then we have
\begin{align*}
\|p_{\alpha_1}(\widetilde a)-p_{\alpha_1}(\widetilde a)p_{\alpha_2}(\widetilde b)\|
&=\|p_{\alpha_1}(\widetilde a)f(\widetilde a)-p_{\alpha_1}(\widetilde a)f(\widetilde
a)p_{\alpha_2}(\widetilde
b)\|\\
&\leq\|p_{\alpha_1}(\widetilde a)f(\widetilde a)-p_{\alpha_1}(\widetilde a)f(\widetilde b)\|\\
&\hspace*{3em}\mbox{}+\|p_{\alpha_1}(\widetilde a)f(\widetilde b)-p_{\alpha_1}(\widetilde a)f(\widetilde
a)p_{\alpha_2}(\widetilde
b)\|\\
&\leq\|f(\widetilde a)-f(\widetilde b)\|
+\|f(\widetilde b)-f(\widetilde a)p_{\alpha_2}(\widetilde b)\|\\
&= \|f(\widetilde a)-f(\widetilde b)\|
+\|f(\widetilde b)p_{\alpha_2}(\widetilde b)-f(\widetilde a)p_{\alpha_2}(\widetilde b)\|\\
&\leq \|f(\widetilde a)-f(\widetilde b)\| +\|f(\widetilde b)-f(\widetilde a)\|\\
&< \sigma\leq \epsilon.
\end{align*}
Then by the choice of $\sigma,$ we have 
$\rank(p_{\alpha_1}(\widetilde a))\leq \rank(p_{\alpha_2}(\widetilde b)).$
\end{proof}

\begin{cor}
\label{SemiContinuityOfSpectrumProjOfC(X,K)}
Let $\epsilon>0,$ let $0\leq\alpha_1<\alpha_2<1,$ and let $M\geq 1$ be a
real number.  Then there exists $\delta>0$ such that
if $X$ is compact Hausdorff space, and if $a, b\in C(X, \KO)_{s.a.},$ $\widetilde
a=a+1,$  $\widetilde b=b+1,$ $\|\widetilde a\|\leq M,$ $\|\widetilde b\|\leq M,$ and
$\|\widetilde a-\widetilde b\|<\delta,$  then 
$$\|p_{\alpha_1}(\widetilde a(x))p_{\alpha_2}(\widetilde b(x))-p_{\alpha_1}(\widetilde
a(x))\|<\epsilon,\;\;\;\;
\text{ for all }x\in X;$$
and $$\rank(p_{\alpha_1}(\widetilde a(x)))
\leq \rank(p_{\alpha_2}(\widetilde b(x))),\;\;\;\;
\text{ for all }x\in X..$$
\end{cor}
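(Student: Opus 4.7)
The plan is to reduce the corollary to Lemma \ref{SemiContinuityOfSpectrumProjOfCptOp} by evaluating pointwise. First I would take the $\delta > 0$ produced by Lemma \ref{SemiContinuityOfSpectrumProjOfCptOp} for the given $\epsilon$, $\alpha_1$, $\alpha_2$, and $M$. (I would note in passing that the proof of that lemma never actually used $\alpha_1 > 0$: the function $f$ defined there remains continuous when $\alpha_1 = 0$, and the identities $\chi_{\alpha_1} f = \chi_{\alpha_1}$ and $\chi_{\alpha_2} f = f$ still hold, so the same $\delta$ works for $0 \le \alpha_1 < \alpha_2 < 1$.)

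Next, given $a, b \in C(X, \KO)_{s.a.}$ with $\|\widetilde a\|, \|\widetilde b\| \le M$ and $\|\widetilde a - \widetilde b\| < \delta$, I would fix an arbitrary $x \in X$ and observe that evaluation at $x$ extends to a unital $*$-homomorphism from the unitization of $C(X, \KO)$ into the unitization $\widetilde\KO$, sending $\widetilde a = a + 1$ to $a(x) + 1 = \widetilde{a(x)}$ and likewise for $b$. Because a $*$-homomorphism is norm-decreasing, this gives
\[
\|\widetilde{a(x)}\| \le \|\widetilde a\| \le M, \quad \|\widetilde{b(x)}\| \le M, \quad \|\widetilde{a(x)} - \widetilde{b(x)}\| \le \|\widetilde a - \widetilde b\| < \delta.
\]
Since $a(x), b(x) \in \KO_{s.a.}$, Lemma \ref{SemiContinuityOfSpectrumProjOfCptOp} applies and yields both $\|p_{\alpha_1}(\widetilde{a(x)}) p_{\alpha_2}(\widetilde{b(x)}) - p_{\alpha_1}(\widetilde{a(x)})\| < \epsilon$ and $\rank(p_{\alpha_1}(\widetilde{a(x)})) \le \rank(p_{\alpha_2}(\widetilde{b(x)}))$. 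Since $x$ was arbitrary, the corollary follows.

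There is essentially no substantial obstacle here; the argument is a uniform pointwise transfer. The only minor point to be careful about is the convention $\widetilde a(x) = \widetilde{a(x)}$, i.e.\ that evaluation commutes with unitization in the natural way, and the extension of the previous lemma to allow $\alpha_1 = 0$. Both of these are immediate from the setup.
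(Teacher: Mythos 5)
Your proof is correct and is essentially the same as the paper's: the paper likewise identifies $\widetilde{C(X,\KO)}$ inside $C(X,\widetilde{\KO})$ so that $\widetilde a(x)=\widetilde{a(x)}$, and then applies Lemma \ref{SemiContinuityOfSpectrumProjOfCptOp} pointwise with the same $\delta$. Your added remark that the lemma's proof still works when $\alpha_1=0$ is a worthwhile observation, since the corollary's hypothesis $0\leq\alpha_1$ is slightly weaker than the lemma's $0<\alpha_1$ and the paper does not comment on this.
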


\begin{proof}
First of all, we identify $\widetilde{C(X, \KO)}$ as a subalgebra of $C(X,
\widetilde{\KO})$ by identifying $(a, \lambda)\in \widetilde{C(X, \KO)}$ with
$a+\lambda 1_X,$
where $1_X$ is the constant function on $X$ at $id_H.$  Then it is clear
that $\widetilde a(x)=\widetilde{a(x)}$ for all $x\in X.$

Apply \ref{SemiContinuityOfSpectrumProjOfCptOp} to $\epsilon, \alpha_1,
\alpha_2$ and $M$ to get a $\delta>0.$  The result follows.
\end{proof}

\begin{cor}
\label{RankBdnessSpProj}
Let $X$ be a compact Hausdorff space, let $0<\alpha<1,$ let $a\in C(X,
\KO)_{s.a.},$ let $\widetilde a=a+1.$  Then there exists some $n\in \N$ such
that $\rank(p_{\alpha}(\widetilde a(x)))\leq n$ for all $x\in
X.$
\end{cor}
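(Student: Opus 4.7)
The plan is to combine a pointwise compactness-of-operators argument with compactness of the base space $X$ and the semicontinuity of spectral projections established in Lemma \ref{SemiContinuityOfSpectrumProjOfCptOp}. The pointwise observation is that for each fixed $x \in X$, the operator $a(x) \in \KO$ is self-adjoint and compact, so its spectrum is a countable set having $0$ as its only possible accumulation point; consequently $\widetilde a(x) = a(x)+1$ has spectrum accumulating only at $1$, and for any $\alpha_2 \in (\alpha, 1)$ the projection $p_{\alpha_2}(\widetilde a(x))$ has finite rank because only finitely many eigenvalues of $\widetilde a(x)$ lie in $(-\infty, \alpha_2)$.

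Next I would pick any $\alpha_2$ with $\alpha < \alpha_2 < 1$, set $M = \|\widetilde a\| + 1$ so that $\|\widetilde{a(y)}\| \leq M$ for all $y \in X$, and feed $\epsilon = 1$, the pair $(\alpha, \alpha_2)$, and $M$ into Lemma \ref{SemiContinuityOfSpectrumProjOfCptOp}. This yields a $\delta > 0$ with the property that whenever $c, d \in \KO_{s.a.}$ satisfy $\|\widetilde c\|, \|\widetilde d\| \leq M$ and $\|\widetilde c - \widetilde d\| < \delta$, one has $\rank(p_\alpha(\widetilde c)) \leq \rank(p_{\alpha_2}(\widetilde d))$.

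Then, using continuity of the map $a \colon X \to \KO$ together with compactness of $X$, I cover $X$ by finitely many open sets $U_1, \ldots, U_m$ with chosen centers $x_1, \ldots, x_m$ such that $\|a(y) - a(x_i)\| < \delta$ for every $y \in U_i$. Applying the semicontinuity estimate at each $y \in U_i$ with $c = a(y)$ and $d = a(x_i)$ (noting $\|\widetilde{a(y)} - \widetilde{a(x_i)}\| = \|a(y) - a(x_i)\|$) yields $\rank(p_\alpha(\widetilde a(y))) \leq \rank(p_{\alpha_2}(\widetilde a(x_i)))$ for every $y \in U_i$. Setting $n = \max_{1 \leq i \leq m} \rank(p_{\alpha_2}(\widetilde a(x_i)))$, which is finite by the first paragraph, gives the uniform bound claimed.

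The only substantive step is the pointwise finiteness, which is the standard fact that compact self-adjoint operators have discrete spectrum away from $0$ combined with the gap between $\alpha$ and $1$; everything else is a routine finite-subcover argument applied to the semicontinuity lemma, so I do not expect any real obstacle.
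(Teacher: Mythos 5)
Your proposal is correct and follows essentially the same route as the paper: both invoke Lemma \ref{SemiContinuityOfSpectrumProjOfCptOp} with $\epsilon=1$ and an intermediate cutoff $\alpha<\alpha_2<1$ to get a $\delta$, then use continuity of $a$ and compactness of $X$ to extract a finite subcover and take the maximum of the finitely many (finite) ranks at the centers. The only cosmetic difference is your choice $M=\|\widetilde a\|+1$ versus the paper's $M=\|\widetilde a\|$, which changes nothing.
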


\begin{proof}
If $a=0,$ then nothing to prove.  So assume $a\neq 0.$

Let $\alpha<\sigma<1.$  Apply Corollary
\ref{SemiContinuityOfSpectrumProjOfCptOp} to $\epsilon=1,$
$0<\alpha<\sigma<1,$ and $M=\|\widetilde a\|,$ to get $\delta>0.$  For each $x\in
X,$ let $U_x=\{y\in X\colon  \|\widetilde a(x)-\widetilde a(y)\|<\delta\}.$  Then
there
exists $x_1, \ldots, x_m\in X$ such that $\bigcup_{i=1}^m U_{x_i}=X.$  Let
$n=\max\{\rank(p_{\sigma}(\widetilde a(x_i)))\colon i=1, \ldots, m\}.$
Let $x\in X.$  Then $x\in U_{x_k}$ for some $k.$  So $\|\widetilde a(x)-\widetilde
a(x_k)\|<\delta.$  Also $\|\widetilde a(x)\|\leq \|\widetilde a\|$ and $\|\widetilde
a(x_k)\|\leq \|\widetilde a\|.$  So by the choice of $\delta,$ we have 
$\rank(p_{\alpha}(\widetilde a(x)))\leq \rank(p_{\sigma}(\widetilde a(x_k)))\leq n.$
\end{proof}

\begin{lem}
\label{HorizontalCtractSpProj}
Let $n\geq \N,$ let $\alpha>0,$ let $M>0$ be a real number, and let $a\in
M_n$ be self-adjoint.  Then
$p_{\alpha}(a)=p_{\alpha/M}(a/M).$
\end{lem}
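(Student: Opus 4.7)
The plan is to reduce this to an identity of functions and invoke the composition law for the functional calculus at the self-adjoint element $a$. Since $a \in M_n$ is self-adjoint and $M_n$ is a finite-dimensional $\Cstar$-algebra, $p_\alpha(a) = \chi_\alpha(a)$ makes sense as a genuine element of $M_n$ (a spectral projection), and likewise for $p_{\alpha/M}(a/M)$.

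The key observation is purely about the two characteristic functions. Define $g \colon \R \to \R$ by $g(t) = t/M$, so that $a/M = g(a)$. Because $M > 0,$ the inequality $t < \alpha$ is equivalent to $t/M < \alpha/M,$ and hence
\begin{equation*}
\chi_{\alpha/M}(g(t)) = \chi_{\alpha/M}(t/M) = \chi_\alpha(t) \quad \text{for all } t \in \R.
\end{equation*}
Restricting to the (finite) spectrum $\Sp(a),$ this gives an equality of continuous functions on the discrete set $\Sp(a)$ (or, more precisely, of bounded Borel functions on $\R$ that agree at every point of $\Sp(a)$).

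First I would apply the composition property of the Borel functional calculus at $a$ to obtain
\begin{equation*}
p_{\alpha/M}(a/M) = \chi_{\alpha/M}(g(a)) = (\chi_{\alpha/M} \circ g)(a) = \chi_\alpha(a) = p_\alpha(a),
\end{equation*}
which is the desired equality. The only subtlety is that the functions involved are not continuous, but since $a$ is a self-adjoint element of $M_n$ its spectrum is finite and the Borel functional calculus is unambiguous (equivalently, one may diagonalise $a = U D U^*$ with $D = \diag(\lambda_1, \ldots, \lambda_n),$ note that $a/M = U (D/M) U^*,$ and observe that $\chi_\alpha(\lambda_i) = \chi_{\alpha/M}(\lambda_i/M)$ entry by entry).

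No step here is a real obstacle; the hypotheses $n \in \N$ and the positivity of $M$ are used only to ensure (respectively) that the spectral projections are legitimate elements of $M_n$ and that scaling by $1/M$ is order-preserving so the characteristic functions match. The parameter $M \geq 1$ plays no role beyond $M > 0.$
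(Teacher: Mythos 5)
Your proof is correct and is essentially the paper's argument in slightly more abstract packaging: the paper lists the eigenvalues of $a$ below $\alpha$, notes they scale to the eigenvalues of $a/M$ below $\alpha/M$ with identical eigenspaces, and sums the corresponding eigenprojections, which is exactly your diagonalization/composition-of-functional-calculus computation. Nothing further is needed.
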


\begin{proof}
Let $\Sp(a)\cap (-\infty, \alpha)=\{r_1, \ldots,
r_k\}.$  Then $$\Sp(a/M)\cap (-\infty, \alpha/M)
=\{r_1/M, r_2/M, \ldots,
r_k/M\}.$$  Then $p_{\alpha}(a)=\sum_{i=1}^k p_i,$ where $p_i$
is the projection to the eigenspace of $a$ corresponding to $r_i,$ and
$p_{\alpha/M}(a/M)=\sum_{i=1}^k q_i,$ where $q_i$ is the
projection onto the eigenspace of $a/M$ corresponding to $r_i/M.$  But for
all $i \in \{1, \ldots, k\}$ and 
all $\xi\in \K^n,$ $a(\xi)=r_i\xi$ if and only if $(a/M)(\xi)=(r_i/M)\xi.$
So $p_i=q_i$ for all $i \in \{1, \ldots, n\},$ and so the result follows.
\end{proof}

\begin{lem}
\label{SemiContOfSpProj2}
Let $1>\alpha>0,$ let $a\in \KO_{s.a.},$ and let $\widetilde a=a+1\in
\widetilde{\KO}.$  Then there exists a $\delta>0$ such that if $b\in
\widetilde{\KO}_{s.a.},$ and if $\|b-\widetilde a\|<\delta,$ then 
$\rank(p_\alpha(\widetilde a))\leq \rank(p_\alpha(b)).$
\end{lem}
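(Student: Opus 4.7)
By Corollary \ref{ShiftedSpProj}, $p_\alpha(\widetilde a) = p_{\alpha - 1}(a),$ which has finite rank because $a \in \KO$ is self-adjoint and $\alpha - 1 < 0$ forces $\Sp(a) \cap (-\infty, \alpha - 1)$ to be finite. If this rank is $0$ the conclusion is trivial, so I assume otherwise and set
$$\mu^* = \max\left(\Sp(\widetilde a) \cap (-\infty, \alpha)\right) < \alpha, \qquad \alpha_* = \frac{\mu^* + \alpha}{2}, \qquad d = \alpha - \alpha_* = \alpha_* - \mu^* > 0.$$
By maximality of $\mu^*,$ every point of $\Sp(\widetilde a)$ is either at most $\mu^*$ or at least $\alpha,$ so the distance from $\alpha_*$ to $\Sp(\widetilde a)$ is at least $d.$

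The plan is then to pick $\delta$ small relative to $d$ and to run a continuous functional calculus argument in the spirit of Lemma \ref{SemiContinuityOfSpectrumProjOfCptOp}. First I would take $\delta \in (0, d/3).$ By the standard upper semicontinuity of the spectrum for self-adjoint elements in a unital $\Cstar$-algebra (if $d(\lambda, \Sp(\widetilde a)) > \delta,$ then $\lambda - \widetilde a$ is invertible with the norm of its inverse less than $1/\delta,$ hence $\lambda - b$ is invertible whenever $\|b - \widetilde a\| < \delta$), any $b \in \widetilde{\KO}_{s.a.}$ with $\|b - \widetilde a\| < \delta$ satisfies $\Sp(b) \subseteq \{x \in \R : d(x, \Sp(\widetilde a)) \leq \delta\},$ and consequently $\Sp(b) \cap [\mu^* + \delta, \alpha - \delta] = \varnothing.$

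Next I would define a continuous piecewise linear $f\colon \R \to [0, 1]$ equal to $1$ on $(-\infty, \mu^* + \delta],$ equal to $0$ on $[\alpha - \delta, \infty),$ and linear on the transition interval. Since $\Sp(\widetilde a) \cup \Sp(b)$ sits in the disjoint union $(-\infty, \mu^* + \delta] \cup [\alpha - \delta, \infty)$ on which $f$ takes only the values $0$ and $1,$ both $f(\widetilde a)$ and $f(b)$ are projections; direct inspection gives $f(\widetilde a) = p_\alpha(\widetilde a)$ and $f(b) = p_{\alpha_*}(b).$ Using norm continuity of the continuous functional calculus on the bounded set $[-\|\widetilde a\| - 1, \|\widetilde a\| + 1]$ and shrinking $\delta$ further if necessary, one arranges $\|f(b) - f(\widetilde a)\| < 1,$ and the standard fact that two projections at norm distance less than $1$ have the same rank then yields $\rank(p_{\alpha_*}(b)) = \rank(p_\alpha(\widetilde a)).$ Since $\alpha_* < \alpha$ gives $p_{\alpha_*}(b) \leq p_\alpha(b),$ one concludes
$$\rank(p_\alpha(\widetilde a)) = \rank(p_{\alpha_*}(b)) \leq \rank(p_\alpha(b)).$$

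The main obstacle, compared with Lemma \ref{SemiContinuityOfSpectrumProjOfCptOp}, is that $b$ need not be of the form $\widetilde c$ for a compact $c$: writing $b = c + \lambda$ with $c \in \KO_{s.a.}$ and $\lambda \in \R,$ the scalar $\lambda$ is close to but not equal to $1,$ and so $b - 1 = c + (\lambda - 1) \cdot 1$ is not compact. The spectral gap argument sidesteps this difficulty by working entirely inside the ambient unital algebra and using only the discreteness of $\Sp(\widetilde a)$ below $\alpha.$
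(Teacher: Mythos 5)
Your proof is correct and takes essentially the same route as the paper: both arguments exploit the finite spectral gap of $\Sp(\widetilde a)$ just below $\alpha,$ produce a continuous function that restricts to a characteristic function on both $\Sp(\widetilde a)$ and $\Sp(b),$ and conclude that the two resulting projections are unitarily equivalent (hence of equal rank) once their distance is below $1,$ with the smaller one dominated by $p_\alpha(b)$; the only difference is that the paper approximates $\chi_{F_1}$ by a polynomial on the disconnected compact set $F_1\cup F_2$ where you use a globally continuous piecewise linear $f,$ which is cosmetic. One presentational wrinkle: your $f$ depends on $\delta,$ so ``shrinking $\delta$ further'' is circular as written --- either fix the plateau boundaries of $f$ at $\mu^*+d/3$ and $\alpha-d/3$ once and for all before choosing $\delta,$ or observe that the Lipschitz constants of the functions $f$ are uniformly bounded for $\delta\in(0, d/3)$ so a single modulus-of-continuity estimate for the functional calculus applies.
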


\begin{proof}
Fix $1>\alpha>0$ and $a\in \KO_{s.a.}.$  Since $\alpha<1,$ $\Sp(\widetilde a)\cap
(-\infty, \alpha)$ is a finite set.  So there exists $\delta_1>0$ such that
$\Sp(\widetilde a)\cap (\alpha-3\delta_1, \alpha+3\delta_1)\subseteq
\{\alpha\}.$  Let $F_1=[-\|\widetilde a\|-\delta_1, \alpha-2\delta_1],$ and
$F_2=[\alpha-\delta_1, \|\widetilde a\|+\delta_1].$  Then $$\Sp(\widetilde
a)\subseteq (-\|\widetilde a\|-\delta_1, \alpha-2\delta_1)
\cup(\alpha-\delta_1, \|\widetilde a\|+\delta_1)\subseteq F_1\cup F_2.$$  Let
$K=F_1\cup F_2.$  Let $\phi= \chi_{F_1}.$  Then $\phi\in C(K).$  Since
$K\subseteq \R$ is compact, there exists a polynomial $\pi\in C(K)$ such
that
$\|\pi-\phi\|_\infty<1/3.$  The map $x\mapsto \pi(x)$ is continuous, so
there exists $\delta_2>0$ such that if $\|x-\widetilde a\|<\delta_2,$ then
$\|\pi(x)-\pi(\widetilde a)\|<1/4.$  Let $\delta=\min\{\delta_1/2, \delta_2\}.$  

Let $b\in \widetilde{\KO}_{s.a.}$ satisfy $\|b-\widetilde a\|<\delta.$  Then
$\Sp(b)\subseteq \cup\{(r-\delta, r+\delta)\colon r\in \Sp(\widetilde a)\}.$  If
$r\in \Sp(\widetilde a),$ then $-\|\widetilde a\|\leq r\leq \alpha-3\delta_1$ or
$\alpha\leq r\leq \|\widetilde a\|,$ and then $$(r-\delta, r+\delta)\subseteq 
(-\|\widetilde a\|-\delta, \alpha-3\delta_1+\delta)\cup (\alpha-\delta, \|\widetilde
a\|+\delta).$$  So 
\begin{align*}
\Sp(b)&\subseteq(-\|\widetilde a\|-\delta,
\alpha-3\delta_1+\delta)\cup (\alpha-\delta, \|\widetilde a\|+\delta)\\
&\subseteq (-\|\widetilde a\|-\delta_1, \alpha-2\delta_1)\cup (\alpha-\delta_1,
\|\widetilde a\|+\delta_1)\subseteq K.
\end{align*}
Then $$\|\phi(\widetilde a)-\phi(b)\|
\leq \|\phi(\widetilde a)-\pi(\widetilde a)\|+\|\pi(\widetilde
a)-\pi(b)\|+\|\pi(b)-\phi(b)\|<1.$$  Thus $\phi(\widetilde a)$ and 
$\phi(b)$ are unitarily equivalent projections, and so $\rank(\phi(\widetilde
a))=\rank(\phi(b)).$  But $\phi(\widetilde a)=p_\alpha(\widetilde a),$ so
$\rank(p_\alpha(\widetilde a))=\rank(\phi(b)).$  Also $\phi\leq \chi_{(-\infty,
\alpha)},$ so $\phi(b)\leq p_\alpha(b),$ and so $\rank(p_\alpha(\widetilde
a))= \rank(\phi(b))\leq p_\alpha(b).$
\end{proof}

The remaining portion of this section will be dedicated to obtaining a 
topological stable rank reduction theorem for SRSHAs.  The idea is to obtain an 
approximate polar decomposition for elements $a$ in a SRSHA
such that the dimensions of the eigenspaces of $|a(x)|$ corresponding to 
small eigenvalues are large enough for every $x\in X.$  
This can be easily done in $\widetilde{C(X, \KO)},$ where $X$ is just 
a one-point space and $\widetilde{C(X, \KO)}$ denotes the unitization of
$C(X, \KO),$ which can always
be taken to be the first base space of any SRSH system.  We then 
have an approximate polar decomposition for the image of the first
coordinate of $a$ under
the first attaching map.  In order to obtain an approximate polar
decomposition for $a,$ we will need to be able to extend the image of 
the unitary used in the approximate polar decomposition for the first 
coordinate of the element $a$ to a unitary in $\widetilde{C(X_2, \KO)},$ 
where $X_2$ is the second
base space in the SRSH system.  Thus we will need an extension result for 
such unitaries.  This extension result for RSHAs is given by Lemma 3.3 in 
\cite{PhillipsStableRankRSHA}.  We will modify this lemma to suit our 
situation.

The following lemma is a slight modification of Lemma 3.3 in \cite{PhillipsStableRankRSHA}.  
In fact, the original proof of Lemma 3.3 in \cite{PhillipsStableRankRSHA} 
also proves the following lemma.

\begin{lem}
\label{Lem3.3PhillipsRSHA2MatrixSizeRemoved}
Let $\epsilon, \alpha>0$ and let $n\in \N.$  Then there exists a
$\delta>0$ such that the following holds.  Let $X$ be a compact Hausdorff
space with $\dim(X)=d<\infty,$ and let $X^{(0)}\subseteq X$ be a closed
subspace.  Let $m\in \N,$ and let $a\in C(X, M_m)$ satisfy $\|a\|\leq
1.$  For each $x\in X,$ let 
$$p(x)=\chi_{(-\infty, \alpha)}([a(x)^*a(x)]^{1/2}).$$  
Suppose that $n\geq \rank(p(x))\geq d/2$ for all $x\in X.$ Let $u^{(0)}\in
U_0(C(X, M_m))$ be a unitary such that 
$$\|[u^{(0)}(x)[a(x)^*a(x)]^{1/2}-a(x)][1-p(x)]\|<\delta$$
for every $x\in X^{(0)}.$  Let $t\mapsto u_t^{(0)}$ be a homotopy from 1 to
$u^{(0)}$ in $U(C(X^{(0)}, M_m)).$ Then there exists a unitary $u\in U_0(C(X,
M_m))$ and a homotopy $t\rightarrow u_t$ in $U(C(X, M_m))$ from $1$ to $u$
such that
$u|_{X^{(0)}}=u^{(0)},$ $u_t|_{X^{(0)}}=u_t^{(0)}$ for all $t,$ and such that
$$\|[u(x)[a(x)^*a(x)]^{1/2}-a(x)][1-p(x)]\|<\epsilon$$
for all $x\in X.$  
\end{lem}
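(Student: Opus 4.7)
The author's remark immediately preceding the lemma signals that the proof of Lemma 3.3 in \cite{PhillipsStableRankRSHA} already establishes this statement. My plan is therefore to trace through that argument and verify that the ambient matrix size $m$ enters only cosmetically, while the genuine constraints are the upper bound $n$ on $\rank(p(x))$ and the lower bound $d/2$ that places us in the stable range of unitary groups.

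First I would use the semicontinuity results of Corollary \ref{SemiContinuityOfSpectrumProjOfC(X,K)} and Lemma \ref{SemiContOfSpProj2} to choose $\delta > 0$ small enough that the polar-decomposition estimate is stable under small perturbations of $a$ and that the rank of $p(x)$ is locally controlled. Next, at each $x$ I would split $M_m = \mathrm{range}(p(x)) \oplus \mathrm{range}(1-p(x))$ and observe that on $\mathrm{range}(1-p(x))$ the positive operator $|a(x)|$ is bounded below by $\alpha$, so the partial isometry $v(x)$ in the polar decomposition $a(x) = v(x)|a(x)|$ is uniquely determined there; any unitary $u(x)$ that agrees with $v(x)$ on $\mathrm{range}(1-p(x))$ would satisfy the target estimate exactly. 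Thus all freedom in extending $u^{(0)}$ is concentrated in the unitary group acting on a subspace of dimension roughly $2\rank(p(x))$, which is at most $2n$ by hypothesis.

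The extension of $u^{(0)}$ to $u$ now reduces to a section-extension problem over the pair $(X, X^{(0)})$ whose fibers are (up to homotopy) unitary groups $U(N)$ with $N \geq \rank(p(x)) \geq d/2$. By Bott periodicity the homotopy groups $\pi_k(U(N))$ are either $0$ or $\Z$ in the stable range $k \leq 2N-1$, so all relevant obstruction classes in $H^{k+1}(X, X^{(0)}; \pi_k(U(N)))$ with $k \leq d-1$ can be handled by the same cell-by-cell induction on a triangulation of $X \setminus X^{(0)}$ used in \cite{PhillipsStableRankRSHA}. The homotopy $u_t^{(0)}$ is extended to a homotopy $u_t$ from $1$ to $u$ by applying the identical obstruction-theoretic argument to $X \times [0,1]$ with boundary data prescribed on $X \times \{0,1\} \cup X^{(0)} \times [0,1]$.

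The main obstacle is not mathematical but verificatory: one must check that in the original Phillips proof every appearance of $m$ can be replaced by $N = \rank(p(x))$ without damage, i.e.\ that the obstruction classes naturally live in cohomology with coefficients in $\pi_*(U(N))$ rather than $\pi_*(U(m))$, and that the various approximation estimates Phillips manipulates depend on $m$ only through $N$. Granting this bookkeeping, no new ideas are required, and the $\delta$ produced depends on $\epsilon$, $\alpha$, $n$ and $d$ alone, as the statement claims.
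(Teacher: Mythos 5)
The paper offers no argument for this lemma beyond the remark preceding it that the proof of Lemma 3.3 of \cite{PhillipsStableRankRSHA} goes through verbatim, so your overall framing --- defer to Phillips and check that the matrix size $m$ enters only through the bound $n$ on $\rank(p(x))$ --- is exactly what the paper intends. Your reductions are also correct as far as they go: on $\mathrm{range}(1-p(x))$ the operator $|a(x)|$ is bounded below by $\alpha$, so any unitary agreeing there with the polar partial isometry satisfies the estimate exactly, and the residual freedom at each point is a torsor over $U(N)$ with $N=\rank(p(x))$.

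However, the obstruction-theoretic step of your sketch is wrong as stated, and it is precisely the step where the hypothesis $\rank(p(x))\geq d/2$ has to do its work. The fibers of the section-extension problem you set up are homotopy equivalent to $U(N)$, and $\pi_k(U(N))=\Z$ for every odd $k\leq 2N-1$; saying that obstruction classes in $H^{k+1}(X,X^{(0)};\pi_k(U(N)))$ ``can be handled'' because the coefficient groups are $0$ or $\Z$ is not an argument --- a nonzero obstruction with coefficients in $\Z$ is exactly the situation in which a cell-by-cell extension fails, and the ``stable range'' $k\leq 2N-1$ concerns the stabilization maps $U(N)\to U(N+1)$, not vanishing. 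The actual mechanism is relative: one first uses the given homotopy $t\mapsto u_t^{(0)}$ and the homotopy extension property of the pair $(X,X^{(0)})$ for maps into the unitary group of $M_m$ to produce some $w\in U_0(C(X,M_m))$ with $w|_{X^{(0)}}=u^{(0)}$, and only then deforms $w$ rel $X^{(0)}$ into the open ``good set'' $G(x)=\{u\colon \|[u|a(x)|-a(x)][1-p(x)]\|<\epsilon\}$. The obstructions to this compression lie in $\pi_j\bigl(U(m),G(x)\bigr)\cong\pi_j\bigl(U(m)/U(N)\bigr)$, the homotopy groups of a complex Stiefel manifold, which vanish for $j\leq 2N$; the hypothesis $N=\rank(p(x))\geq d/2$ gives $2N\geq d=\dim X$, which is what kills every obstruction. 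This also explains why the homotopy $u_t^{(0)}$ is part of the hypotheses rather than an afterthought: fiberwise connectivity of $G(x)\simeq U(N)$ alone would not suffice for $d\geq 2$. Finally, note that the lemma quantifies $\delta$ before $X$ and hence before $d$, so your closing claim that $\delta$ may depend on $d$ does not match the statement; this is harmless only because the hypotheses force $d\leq 2n$, so a $d$-dependent $\delta$ can be replaced by its minimum over $d\leq 2n$.
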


Now we remove the condition that the element $\|a\|$ has norm less or
equal to 1 from Lemma \ref{Lem3.3PhillipsRSHA2MatrixSizeRemoved}.

\begin{cor}
\label{Lem3.3PhillipsRSHA2Norm1MatrixSizeRemoved}
Let $\epsilon, \alpha>0,$ let $n\in \N,$ and let $M\geq 1$ be a real number.  
Then there exists a
$\delta>0$ such that the following holds.  Let $X$ be a compact Hausdorff
space with $\dim(X)=d<\infty,$ and let $X^{(0)}\subseteq X$ be a closed
subspace.  Let $m\in \N,$ and let $a\in C(X, M_m)$ satisfy $\|a\|\leq
M.$  For each $x\in X,$ let 
$$p(x)=p_{\alpha}(|a(x)|).$$  
Suppose that $n\geq \rank(p(x))\geq d/2$ for all $x\in X.$ Let $u^{(0)}\in
U_0(C(X^{(0)}, M_m))$ be a unitary such that 
$$\|[u^{(0)}(x)|a(x)|-a(x)][1-p(x)]\|<\delta$$
for every $x\in X^{(0)}.$ Let $t\mapsto u_t^{(0)}$ be a homotopy in
$U(C(X^{(0)}, M_m))$ from 1 to $u^{(0)}.$   Then there exists a unitary
$u\in U_0(C(X, M_m))$ and a homotopy $t\mapsto u_t$ in $U(C(X, M_m))$ from
1 to $u$ such that $u|_{X^{(0)}}=u^{(0)},$ $u_t|_{X^{(0)}}=u_t^{(0)}$ for
all $t,$ and that
$$\|[u(x)|a(x)|-a(x)][1-p(x)]\|<\epsilon$$
for all $x\in X.$
\end{cor}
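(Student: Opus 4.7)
The plan is to reduce to Lemma \ref{Lem3.3PhillipsRSHA2MatrixSizeRemoved} by rescaling. Given $\epsilon, \alpha > 0$, $n \in \N$, and $M \geq 1$, I would apply Lemma \ref{Lem3.3PhillipsRSHA2MatrixSizeRemoved} with the parameters $\epsilon' = \epsilon/M$ and $\alpha' = \alpha/M$, and the same $n$, obtaining a $\delta' > 0$. I then set $\delta = M\delta'$, and claim this $\delta$ works for the corollary.

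Now suppose $X$, $X^{(0)}$, $a \in C(X, M_m)$ with $\|a\| \leq M$, and a unitary $u^{(0)} \in U_0(C(X^{(0)}, M_m))$ with its homotopy $t \mapsto u_t^{(0)}$ are given, satisfying the rank condition and the hypothesis on $u^{(0)}$ at the level of $\delta$. Consider $a' = a/M \in C(X, M_m)$, which satisfies $\|a'\| \leq 1$. By Lemma \ref{HorizontalCtractSpProj}, for every $x \in X$,
\begin{equation*}
p'(x) := \chi_{(-\infty, \alpha')}(|a'(x)|) = p_{\alpha/M}(|a(x)|/M) = p_{\alpha}(|a(x)|) = p(x),
\end{equation*}
so the rank hypothesis $n \geq \rank(p'(x)) \geq d/2$ is automatic. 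Moreover, for each $x \in X^{(0)}$,
\begin{equation*}
\bigl\|[u^{(0)}(x)|a'(x)| - a'(x)][1 - p'(x)]\bigr\| = \tfrac{1}{M}\bigl\|[u^{(0)}(x)|a(x)| - a(x)][1 - p(x)]\bigr\| < \tfrac{\delta}{M} = \delta',
\end{equation*}
which is exactly the hypothesis of Lemma \ref{Lem3.3PhillipsRSHA2MatrixSizeRemoved} for $a'$ at the level of $\delta'$.

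Applying Lemma \ref{Lem3.3PhillipsRSHA2MatrixSizeRemoved} to $a'$, $u^{(0)}$, and $t \mapsto u_t^{(0)}$ produces a unitary $u \in U_0(C(X, M_m))$ and a homotopy $t \mapsto u_t$ from $1$ to $u$ in $U(C(X, M_m))$ such that $u|_{X^{(0)}} = u^{(0)}$, $u_t|_{X^{(0)}} = u_t^{(0)}$ for all $t$, and
\begin{equation*}
\bigl\|[u(x)|a'(x)| - a'(x)][1 - p'(x)]\bigr\| < \epsilon' = \epsilon/M
\end{equation*}
for all $x \in X$. Multiplying through by $M$ and using $p'(x) = p(x)$ and $|a(x)| = M|a'(x)|$, this rescales to
\begin{equation*}
\bigl\|[u(x)|a(x)| - a(x)][1 - p(x)]\bigr\| < \epsilon
\end{equation*}
for all $x \in X$, which is the desired conclusion.

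There is no genuine obstacle here; the only thing to check carefully is that the scaling identity for spectral projections (Lemma \ref{HorizontalCtractSpProj}) makes $p(x)$ invariant under replacing $(a, \alpha)$ by $(a/M, \alpha/M)$, so the rank hypothesis transfers without change and the unitary $u$ produced by the previous lemma works simultaneously for both $a$ and $a/M$.
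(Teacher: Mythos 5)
Your proof is correct and follows essentially the same route as the paper: rescale $a$ to $a/M$, use Lemma \ref{HorizontalCtractSpProj} to see that the spectral projection $p(x)$ is unchanged under replacing $(a,\alpha)$ by $(a/M,\alpha/M)$, apply Lemma \ref{Lem3.3PhillipsRSHA2MatrixSizeRemoved}, and rescale back. The only (immaterial) difference is that you take $\delta = M\delta'$ while the paper takes $\delta = \delta'$ and uses $M \geq 1$ to absorb the factor; both choices are valid.
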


\begin{proof}
Apply Lemma \ref{Lem3.3PhillipsRSHA2MatrixSizeRemoved} to $\epsilon/M,
\alpha/M,$ $n$ to get $\delta.$  Let $X,$ $X^{(0)},$ $m,$ $a,$ $p,$ $u^{(0)}$
be as given in the statement of this corollary.  Let $t\mapsto u_t^{(0)}$
be a path from 1 to $u^{(0)}.$

Let $b=a/M.$ Then $\|b\|\leq 1.$   
Let $q(x)=p_{\alpha/M}(|b(x)|).$
By Lemma \ref{HorizontalCtractSpProj}, we have
\mbox{$q(x)=p(x)$} for all $x\in X.$  Then we have 
$n\geq \rank(q(x))\geq d/2$ for all $x\in X.$ Also, 
$$\|[u^{(0)}(x)|b(x)|-b(x)][1-q(x)]\|
<\delta/M\leq \delta$$ for all $x\in X^{(0)}.$  So by the choice of
$\delta,$ there exists a unitary $u\in U_0(C(X, M_m)),$ and a homotopy
$t\mapsto u_t$ in $U(C(X, M_m))$ from 1 to $u$ such that
$u|_{X^{(0)}}=u^{(0)},$ $u_t|_{X^{(0)}}$ for all $t,$ and that 
$$\|[u(x)|b(x)|-b(x)][1-q(x)]\|<\epsilon/M.$$
Then $$\|[u(x)|a(x)|-a(x)][1-p(x)]\|<M\cdot \frac{\epsilon}{M}=\epsilon.$$
\end{proof}

The next lemma adapts the above to unitizations of $C(X)\otimes M_n.$

\begin{lem}
\label{UnitizedLem3.3PhillpsRSHA2}
Let $1>\alpha, \epsilon>0,$ let $n\in \N,$ and let $M\in [1, \infty).$  Then
there exists $\delta>0$ such that the following holds.
Let $X$ be a compact Hausdorff space such that $\dim(X)=d<\infty,$ and let
$Y$ be a closed subspace.  Let $m\in \N,$ let $a\in C(X, M_m),$
and let $\widetilde a=a+1_X\in {C(X, M_m)^\sim},$ where $1_X$ denotes the adjoined
identity.   Suppose that $\|\widetilde a\|\leq M.$  For each $x\in X,$ let 
$\widetilde p(x)=p_{\alpha}(|\widetilde a(x)|).$
Suppose that $n\geq \rank(\widetilde p(x))\geq d/2.$  Let $u_0\in
U_0({C(Y, M_m)^\sim})$ satisfy 
\begin{equation}
\label{Eq3.[UnitizedLem3.3PhillpsRSHA2].1}
\left\|[u_0(x)|\widetilde a(x)|-\widetilde
a(x)][1-\widetilde p(x)]\right\|<\delta\;\;\;\;\text{for all }x\in Y.
\end{equation}
Let $t\mapsto w_t$ be a homotopy in $U(C(Y, M_m)^\sim)$ from 1 to $u_0.$
Then there exists a unitary $u$ contained in $U_0({C(X, M_m)^\sim})$ and a homotopy
$t\rightarrow v_t$ in $U(C(X, M_m)^\sim)$ from 1 to $u$ such that
$u|_Y=u_0,$ $v_t|_Y=w_t$ for all $t,$ and that

\begin{equation}
\label{Eq3.[UnitizedLem3.3PhillpsRSHA2].2}
\left\|[u(x)|\widetilde a(x)|-\widetilde
a(x)][1-\widetilde p(x)]\right\|<\delta\;\;\;\;\text{for all }x\in X,
\end{equation}
\end{lem}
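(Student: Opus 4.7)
The plan is to reduce the problem to Corollary \ref{Lem3.3PhillipsRSHA2Norm1MatrixSizeRemoved} via the explicit direct-sum decomposition of the unitization. Since $C(X, M_m)$ is already unital, the forced unitization splits: the map $\psi\colon C(X, M_m)^\sim \to C(X, M_m) \oplus \K$ defined by $\psi(f, \lambda) = (f + \lambda I_{M_m}, \lambda)$ is a $\Cstar$-isomorphism when the target is given componentwise product and adjoint and the $C^*$-norm $\|(f, \lambda)\| = \max\{\|f\|, |\lambda|\}$. Under $\psi$, the adjoined identity $1_X$ corresponds to $(I_{M_m}, 1)$, the element $\widetilde a = a + 1_X$ corresponds to $(b, 1)$ with $b = a + I_{M_m} \in C(X, M_m)$, and $\|\widetilde a\| \leq M$ gives $\|b\| \leq M$. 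Because $\alpha < 1$ and the scalar part of $|\widetilde a(x)|$ is $1$, the spectral projection $\widetilde p(x) = p_\alpha(|\widetilde a(x)|)$ corresponds to $(p(x), 0)$ with $p(x) = p_\alpha(|b(x)|)$, so the rank condition on $\widetilde p$ transfers verbatim to $p$. Unitaries in $C(Y, M_m)^\sim$ decompose as $\psi^{-1}(\tilde u, \mu)$ with $\tilde u \in U(C(Y, M_m))$ and $\mu \in S^1$, and since $S^1$ is connected, $u \in U_0(C(Y, M_m)^\sim)$ corresponds to $\tilde u \in U_0(C(Y, M_m))$.

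A direct computation in the direct-sum picture shows that
$$[u(x)|\widetilde a(x)| - \widetilde a(x)][1 - \widetilde p(x)] \;\longleftrightarrow\; \bigl([\tilde u(x)|b(x)| - b(x)][I - p(x)],\; \mu - 1\bigr),$$
so its norm is the maximum of the two components. Thus the hypothesis $\|\cdot\| < \delta$ on $Y$ gives both $\|[\tilde u_0(x)|b(x)| - b(x)][I - p(x)]\| < \delta$ for all $x \in Y$ and $|\mu - 1| < \delta$. The homotopy $w_t$ decomposes similarly as $\psi^{-1}(\tilde w_t, \mu_t)$, where $\tilde w_t$ is a homotopy in $U(C(Y, M_m))$ from $I_{M_m}$ to $\tilde u_0$ and $\mu_t$ is a path in $S^1$ from $1$ to $\mu$.

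With this setup, I will choose $\delta$ as follows: apply Corollary \ref{Lem3.3PhillipsRSHA2Norm1MatrixSizeRemoved} to $\epsilon, \alpha, n, M$ to obtain some $\delta_1 > 0$, and set $\delta = \min\{\delta_1, \epsilon\}$. That corollary, applied to $b$, $\tilde u_0$, and $\tilde w_t$, will then produce a unitary $\tilde u \in U_0(C(X, M_m))$ extending $\tilde u_0$, together with a homotopy $\tilde v_t$ in $U(C(X, M_m))$ extending $\tilde w_t$, such that $\|[\tilde u(x)|b(x)| - b(x)][I - p(x)]\| < \epsilon$ for all $x \in X$. Setting $u = \psi^{-1}(\tilde u, \mu)$ and $v_t = \psi^{-1}(\tilde v_t, \mu_t)$ will then yield the required unitary and homotopy in $C(X, M_m)^\sim$; the restriction and endpoint conditions follow immediately from those of $\tilde u$ and $\tilde v_t$, and the norm bound follows from $\max\{\epsilon, |\mu - 1|\} \leq \max\{\epsilon, \delta\} \leq \epsilon$ (so that, assuming the $\delta$ in the conclusion is a typo for $\epsilon$, the conclusion holds).

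The main thing that will need care is the bookkeeping in the isomorphism $\psi$: verifying that $|\widetilde a(x)|$ really decomposes as $(|b(x)|, 1)$, so that because $\alpha < 1$ the spectral projection drops its scalar component; checking that $U_0$ of the unitization matches $U_0$ of the underlying matrix algebra (which is what allows me to invoke the non-unitized Corollary); and confirming that the $C^*$-norm on $C(X, M_m)^\sim$ coincides with the maximum of the two component norms. Once those identifications are verified, everything reduces mechanically to Corollary \ref{Lem3.3PhillipsRSHA2Norm1MatrixSizeRemoved}.
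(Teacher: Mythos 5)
Your proposal is correct and follows essentially the same route as the paper: both reduce to Corollary \ref{Lem3.3PhillipsRSHA2Norm1MatrixSizeRemoved} via the isomorphism $C(X,M_m)^\sim\cong C(X,M_m)\oplus\K$ (the paper's $\Phi_X$), observe that $\widetilde p(x)$ has zero scalar part because $\alpha<1$, and track the scalar $\mu$ and the matrix part separately so that the final norm is the maximum of the two components. Your reading of the $\delta$ in the conclusion as a typo for $\epsilon$ also matches what the paper's proof actually establishes.
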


\begin{proof}
Let $0<\epsilon, \alpha<1,$ $n\in \N,$ and $M\in [1, \infty)$ be given.
Apply Corollary \ref{Lem3.3PhillipsRSHA2Norm1MatrixSizeRemoved} to 
$\epsilon,$ $\alpha,$ $n,$ and $M$ to obtain $\delta'>0,$ and let
$\delta=\min\{\epsilon, \delta'/2\}.$  Let $X,$ $Y,$ $m,$ $a,$
$\widetilde p,$ and $u_0$ satisfy the conditions in the statement of the
lemma.  Let $t\mapsto w_t$ be a homotopy in $U(C(Y, M_m)^\sim)$ from $1$ to
$u_0.$

We set up some notations first.  
We use 1 to denote the adjoined identity of $\widetilde{M_m},$ and use
$e$ to
denote the identity of $M_m.$ Use $1_X$ and $1_Y$ to denote
the adjoined identity of $C(X, M_m)^\sim$ and $C(Y, M_m)^\sim,$
respectively.  Use $e_X$ and $e_Y$ to denote the identities of $C(X, M_m)$
and $C(Y, M_m)$ respectively.

For each $x\in X,$ or $Y,$ use
${\mathrm{ev}}_x$ to denote the map $C(X, M_m)\rightarrow M_m,$ or 
$C(Y, M_m)\rightarrow M_m,$ defined by ${\mathrm{ev}}_x(a)=a(x).$
By identifying $(a, \lambda)$ with $a+\lambda\cdot 1_X,$ or $a+\lambda\cdot
1_Y,$ we treat 
$C(X, M_m)^\sim$ and $C(Y, M_m)^\sim$ as subalgebras of 
$C(X, \widetilde{M_m})$ and $C(Y, \widetilde{M_m})$ respectively.  For each
$x\in X,$ or $Y,$ use $\widetilde{{\mathrm{ev}}_x}$ to denote the map
$C(X, M_m)^\sim\rightarrow \widetilde{M_m}$ or
$C(Y, M_m)^\sim\rightarrow \widetilde{M_m},$ defined by
$\widetilde{{\mathrm{ev}}_x}(a)=a(x).$
Let $\tau$ denote the standard map from the unitization of any
$\Cstar$-algebra to $\K.$

Define 
$$\Phi_X\colon  C(X, M_m)^\sim\rightarrow C(X, M_m)\oplus \K
\;\;\; \text{by }(a, \lambda)\mapsto (a+\lambda e_X, \lambda),$$ 
$$\Phi_Y\colon  C(Y, M_m)^\sim\rightarrow C(Y, M_m)\oplus \K
\;\;\; \text{by }(a, \lambda)\mapsto (a+\lambda e_Y, \lambda),$$ 
and
$$\Phi\colon  \widetilde{M_m}\rightarrow M_m\oplus \K
\;\;\; \text{by }(a, \lambda)\mapsto (a+\lambda e, \lambda).$$ 

Define $\widetilde R\colon C(X, M_m)^\sim\rightarrow C(Y, M_M)^\sim$ by 
$R(a+\lambda 1_X)=a|_Y+\lambda 1_Y,$ and 
define $R\colon C(X, M_m)\rightarrow C(Y, M_m)$ by
$R(a)=a|_Y.$
Then for every $x\in X$ and every $y\in Y,$ 
we have the following commutative diagram:

\begin{displaymath}
\begin{array}{ccccccc}
\widetilde{M_m}
&\xleftarrow{\widetilde{{\mathrm{ev}}_x}}
&C(X, M_m)^\sim   
&\xrightarrow{\widetilde R}
& C(Y, M_m)^\sim
&\xrightarrow{\widetilde{{\mathrm{ev}}_y}}
&\widetilde{M_m}\\
\downarrow \Phi
&&\downarrow \Phi_X
&&\downarrow \Phi_Y
&&\downarrow \Phi \\
M_m\oplus \K
&\xleftarrow{{\mathrm{ev}}_x\oplus id}
& C(X, M_m)\oplus \K
&\xrightarrow{R\oplus \K}
&C(Y, M_m)\oplus \K
&\xrightarrow{{\mathrm{ev}}_y\oplus id}
&M_m\oplus \K
\end{array}
\end{displaymath}

Now, since for all $x\in X,$ we have $$\tau(\widetilde
p(x))=\tau(\chi_\alpha(|\widetilde a(x)|)) =\chi_\alpha(\tau(|\widetilde
a(x)|))=\chi_\alpha(|\tau(\widetilde a(x))|)=\chi_\alpha(1)=0,$$ we see that for
all $x\in X,$ 
$\widetilde p(x)=(p(x), 0)$ for some projection $p(x)\in X.$  Since $u_0\in
C(Y, M_m)^\sim,$ there exists some $w_0\in C(Y, M_m)$ and some unitary
$\mu\in \K$ such that $u_0=(w_0, \mu).$  Note that
(\ref{Eq3.[UnitizedLem3.3PhillpsRSHA2].1}) implies that

\begin{equation}
\label{Eq3.[UnitizedLem3.3PhillpsRSHA2].3}
|\mu-1|=\left\|\tau\Bigl[[u_0(x)|\widetilde a(x)|-\widetilde
a(x)][1-\widetilde p(x)]\Bigr]\right\|<\delta\leq \epsilon,\;\;\;\text{for
all }x\in X.
\end{equation}

Let $\widehat v_0=w_0+\mu e_Y,$ so that $\Phi_Y(u_0)=(w_0+\mu e_Y, \mu)=(\widehat
v_0, \mu).$
Since $\Phi_Y$ is an isomorphism, we have $\widehat v_0\in U_0(C(Y,
M_m)).$  Let $\widehat a=a+e_X,$ so $(\widehat a, 1)=\Phi_X(\widetilde a).$
Next we compute: for each $x\in Y,$ we have
\begin{align*}
&\Phi\left(\left[u_0(x)|\widetilde a(x)|-\widetilde a(x)\right][1-\widetilde
p(x)]\right)\\
&\hspace*{3em}\mbox{}=\left[\Phi(u_0(x))|\Phi(\widetilde a(x))|-\Phi(\widetilde
a(x))\right]\Phi[1-\widetilde p(x))\\
&\hspace*{3em}\mbox{}= 
\Bigl[(\widehat v_0(x),\mu)\cdot
(|\widehat a(x)|,1)-(\widehat a(x),1)\Bigr](e-p(x), 1)\\
&\hspace*{3em}\mbox{}= 
\Bigl[(\widehat v_0(x)|\widehat a(x)|,\mu)
-(\widehat a(x),1)\Bigr](e-p(x), 1)\\
&\hspace*{3em}\mbox{}= 
\Bigl(\widehat v_0(x)|\widehat a(x)|-\widehat a(x),\mu-1\Bigr) \cdot(e-p(x), 1)\\
&\hspace*{3em}\mbox{}= 
\left(\Bigl[\widehat v_0(x)|\widehat a(x)|-\widehat
a(x)\Bigr]\Bigl[e-p(x)\Bigr],\mu-1\right).
\end{align*}
Thus, since $\Phi$ is isometric, we obtain the following from
(\ref{Eq3.[UnitizedLem3.3PhillpsRSHA2].1})
\begin{equation}
\label{Eq3.[UnitizedLem3.3PhillpsRSHA2].4}
\left\|\Bigl[\widehat v_0(x)|\widehat a(x)|-\widehat
a(x)\Bigr]\Bigl[e-p(x)\Bigr]\right\| <\delta<\delta',
\;\;\;\text{for all }x\in Y.
\end{equation}

Now, let $\pi\colon M_m\oplus \K\rightarrow M_m$ be the standard map.  Then we
compute again: for every $x\in X,$ we have
\begin{align*}
p(x)&=\pi(p(x), 0)=\pi\circ\Phi(p(x), 0)=\pi\circ\Phi(\widetilde p(x))\\
&=\pi\circ\Phi(\chi_\alpha(|\widetilde a(x)|))
=\chi_\alpha(\pi\circ\Phi(|\widetilde a(x)|))\\
&=\chi_\alpha\bigl(|\pi\circ\Phi(\widetilde a(x))|\bigr)
=\chi_\alpha\bigl(|\pi\circ\Phi(a(x),1)|\bigr)\\
&=\chi_\alpha\bigl(|\pi(a(x)+e,1)|\bigr)
=\chi_\alpha\bigl(|\pi(\widehat a(x),1)|\bigr)\\
&=\chi_\alpha(|\widehat a(x)|).
\end{align*}
Also, we have $n\geq \rank(p(x))=\rank(\widetilde p(x))\geq d/2$ and $\|\widehat
a\|\leq M.$  Let $\widehat w_t=\pi(\Phi_Y(w_t))$ for each $t.$  Then
$t\mapsto \widehat w_t$ is a homotopy in $U(C(Y, M_m))$ from $\widehat
w_t=\pi(\Phi_Y((0, 1))=\pi(e_Y, 1)=e_Y,$ to $\widehat
w_1=\pi(\Phi_Y(u_0))=\pi(\widehat v_0, \mu)=\widehat v_0.$

Thus by the choice of $\delta',$ there exist $\widehat v\in
U_0(C(X, M_m))$ and a homotopy $t\mapsto \widehat v_t$ in $U(C(X, M_m))$ for
$e_X$ to $\widehat v$ such that
$\widehat v|_Y=\widehat v_0,$ $\widehat v_t|_Y= \widehat w_t,$ and 
\begin{equation}
\label{Eq3.[UnitizedLem3.3PhillpsRSHA2].5}
\left\|\Bigl[\widehat v(x)|\widehat a(x)|-\widehat
a(x)\Bigr]\Bigl[e-p(x)\Bigr]\right\| <\epsilon,
\;\;\;\text{for all }x\in X.
\end{equation}

Let $u=(\widehat v-\mu e_X, \mu).$  Then $\Phi_X(u)=\Phi(\widehat v-\mu e_X, \mu)
=(\widehat v, \mu).$  Since $$(\widehat v, \mu)\in U_0(C(X, M_m)\oplus \K),$$ and
since $\Phi_X$ is a *-isomorphism, we have $u\in U_0(C(X, M_m)^\sim).$
Also for all $x\in Y,$ we have 
\begin{align*}
u(x)&=(\widehat v(x)-\mu e, \mu)=(\widehat
v_0(x)-\mu e, \mu)\\
&=(w_0(x)+\mu e-\mu e, \mu)=(w_0(x), \mu)=u_0(x).
\end{align*}
Thus $u|_Y=u_0.$

Then for all $x\in X,$ we have
\begin{align*}
&\Phi\left(\bigl[u(x)|\widetilde a(x)|-\widetilde a(x)\bigr]\bigl[1-\widetilde
p(x)\bigr]\right)\\
&\hspace*{3em}\mbox{}=
\bigl[\Phi(u(x))|\Phi(\widetilde a(x))|-\Phi(\widetilde
a(x))\bigr]\Phi\bigl(1-\widetilde p(x)\bigr)\\
&\hspace*{3em}\mbox{}=
\bigl[(\widehat v(x), \mu)(|\widehat a(x)|, 1)-(\widehat a(x),
1)\bigr](e-p(x), 1)\\
&\hspace*{3em}\mbox{}=
\bigl[(\widehat v(x)|\widehat a(x)|-\widehat a(x), \mu-1)\bigr] (e-p(x), 1)\\
&\hspace*{3em}\mbox{}=
\left(\bigl[\widehat v(x)|\widehat a(x)|-\widehat a(x)\bigr]\bigl[e-p(x)\bigr],
\mu-1\right).
\end{align*}
Thus for all $x\in X,$ we have, by
(\ref{Eq3.[UnitizedLem3.3PhillpsRSHA2].3}), 
(\ref{Eq3.[UnitizedLem3.3PhillpsRSHA2].5}), 
 and the fact that $\Phi$ is
isometric, 
\begin{align*}
&\left\|\bigl[u(x)|\widetilde a(x)|-\widetilde a(x)\bigr]\bigl[1-\widetilde
p(x)\bigr]\right\|\\
&\hspace*{3em}\mbox{}=\left\|
\left(\bigl[\widehat v(x)|\widehat a(x)|-\widehat a(x)\bigr]\bigl[e-p(x)\bigr],
\mu-1\right)
\right\|\\
&\hspace*{5em}\mbox{}(\text{the norm above is now taken in $M_m\oplus \K$})\\
&\hspace*{3em}\mbox{}=\max\left\{\left\|\Bigl[\widehat v(x)|\widehat a(x)|-\widehat
a(x)\Bigr]\Bigl[e-p(x)\Bigr]\right\|, |\mu-1|\right\}\\
&\hspace*{3em}\mbox{}<\epsilon.
\end{align*}

Let $v_t=\Phi_X^{-1}(\widehat v_t, \tau(w_t)).$  Then $t\mapsto v_t$ is a
homotopy in $U(C(X, M_m)^\sim).$  For each $t$ and each $y\in Y,$ 
we have $\widehat v_t(y)=\widehat w_t(y),$ so we have $(\widehat v_t(y), \tau(w_t))=
(\widehat w_t(y), \tau(w_t)).$  So $$R\oplus id(\widehat v_t, \tau(w_t))=(\widehat w_t,
\tau(w_t))=\Phi_Y(w_t)$$ and  $$\Phi_Y(w_t)=R\oplus
id(\Phi_X(v_t))=\Phi_Y(\widetilde R(v_t)).$$  Thus $w_t=\widetilde R(v_t).$  So
$w_t|_{Y}=v_t.$   Also $v_0=\Phi_X^{-1}(e_X, 1)=1_X$ and
$v_1=\Phi_X^{-1}(\widehat v, \tau(u_0)))=\Phi_X^{-1}(\widehat v, \mu)=u.$
This finishes the proof.
\end{proof}

The next lemma will ``stabilize'' the above lemma, and will be the 
one that we will need.
\begin{lem}
\label{StablizedLem3.3Phillips}
Let $0<\epsilon<1$ and let $0<\alpha_1<\alpha_2<1.$  Let
$X$ be a compact Hausdorff space with $\dim(X)=d<\infty.$  Let $Y\subseteq
X$ be a closed subset.  Let $a\in C(X, \KO)$ and let $\widetilde
a=a+1\in C(X, \KO)^\sim.$   For all $x\in X,$ let
$p_1(x)=p_{\alpha_1}(|\widetilde a(x)|)$ and let $p_2(x)=p_{\alpha_2}(|\widetilde
a(x)|).$  Suppose that for all $x\in X,$ $\rank(p_1(x))\geq d/2.$  Then
there exists $\delta>0$ such that: if $u_0\in U_0(C(Y, \KO)^\sim)$ is a
unitary and $h_0\colon [0, 1]\rightarrow U(C(Y, \KO)^\sim)$ is a homotopy such that 
$h_0(0)=1,$ $h_0(1)=u_0,$ and
\begin{equation}
\label{Eq3.[StablizedLem3.3Phillips].1}
\left\|[u_0(x)|\widetilde a(x)|-\widetilde
a(x)][1-p_1(x)]\right\|<\delta\;\;\;\text{for all }x\in Y,
\end{equation}
then there exists a unitary $u\in U_0(C(X, \KO)^\sim)$ and a homotopy
$h\colon [0, 1]\rightarrow U(C(X, \KO)^\sim)$  such that $h(0)=1,$ 
$h(1)=u,$ that $h(t)|_Y=h_0(t)$ for all $t,$ that $u|_Y=u_0,$
and that
$$\left\|[u(x)|\widetilde a(x)|-\widetilde
a(x)][1-p_2(x)]\right\|<\delta\;\;\;\text{for all }x\in X.$$
\end{lem}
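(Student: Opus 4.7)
The plan is to reduce everything to the matrix-valued Lemma \ref{UnitizedLem3.3PhillpsRSHA2} by compressing by a single finite-rank projection on $H$ that does not depend on $x\in X$. The gap $\alpha_1<\alpha_2$ is exactly the slack that will absorb the approximation errors introduced by this compression.

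The first step is to extract a uniform rank bound: by Corollary \ref{RankBdnessSpProj} applied to $a+1$ and the exponent $\alpha_2$, there exists $n\in\N$ with $\rank p_2(x)\le n$ for every $x\in X$. Next, because $\{a(x):x\in X\}$ is a compact subset of $\KO$ and finite rank operators are dense in $\KO,$ I can choose a fixed finite-rank projection $P_0\in \KO$ of some rank $N\ge n$ so that $\|P_0 a(x)P_0-a(x)\|$ is uniformly small. Using the continuous functional calculus together with Lemma \ref{SemiContOfSpProj2} and Corollary \ref{SemiContinuityOfSpectrumProjOfC(X,K)}, I can arrange (by shrinking the approximation constant further if necessary) that the range of $p_2(x)$ is contained in the range of $P_0$ for all $x\in X$, so that $P_0 p_i(x)=p_i(x)=p_i(x)P_0$ for $i=1,2$. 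In particular, in the splitting $H=P_0H\oplus(1-P_0)H,$ the element $\widetilde a(x)$ differs from $P_0\widetilde a(x)P_0 + (1-P_0)$ by something of uniformly small norm.

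Now I analyze the hypothesized unitary. Writing $u_0=v_0+\lambda\cdot 1_Y$ with $v_0\in C(Y,\KO)$ and $|\lambda|=1$ (viewing $C(Y,\KO)^\sim\subseteq C(Y,\widetilde\KO)$ as in the proof of Lemma \ref{UnitizedLem3.3PhillpsRSHA2}), the estimate (\ref{Eq3.[StablizedLem3.3Phillips].1}) forces $u_0(y)$ to act approximately as $\lambda$ on the range of $1-p_1(y)$. Combined with the fact that $v_0$ takes values in $\KO$ and the compactness of $Y$, this lets me conclude, after possibly enlarging $P_0,$ that $\|u_0(y)-[P_0 u_0(y)P_0+\lambda(1-P_0)]\|$ is uniformly small on $Y.$ An analogous compression is applied pointwise along the homotopy $h_0$ to produce an approximating path in $U(P_0 C(Y,\widetilde\KO)P_0)$ from the identity to $P_0 u_0 P_0.$

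With these reductions in place, the algebra $P_0 C(X,\widetilde\KO)P_0$ is naturally isomorphic to $C(X,M_N),$ inside which the compressed element $\widehat a(x)=P_0\widetilde a(x)P_0$ together with a constant scalar $1$ on $(1-P_0)H$ plays the role of $\widetilde a$ in Lemma \ref{UnitizedLem3.3PhillpsRSHA2}; the spectral projection $p_2$ of $\widehat a$ still has rank between $d/2$ and $n,$ because the relevant eigenvectors were already contained in $P_0H.$ I then apply Lemma \ref{UnitizedLem3.3PhillpsRSHA2} with matrix size $N,$ rank bound $n,$ and an intermediate threshold $\alpha'\in(\alpha_1,\alpha_2)$ chosen so that the error from compression is swallowed by the gap $\alpha_2-\alpha'.$ This produces a unitary $\widehat u\in U_0(C(X,M_N)^\sim)$ extending the compressed $u_0$ together with a homotopy extending the compressed $h_0.$ I then define the desired unitary as $u(x)=\widehat u(x)+\lambda(1-P_0),$ regarded inside $C(X,\KO)^\sim\subseteq C(X,\widetilde\KO),$ and set $h(t)(x)=\widehat{h}(t)(x)+\mu(t)(1-P_0)$ where $\mu(t)$ is a path from $1$ to $\lambda$ in $S^1.$ The \emph{main obstacle} will be the bookkeeping for the constant $\delta$: every compression (of $\widetilde a,$ of $u_0,$ and of $h_0$) introduces an error that has to be estimated against the gap $\alpha_2-\alpha_1$ and against the $\delta'$ supplied by the matrix-valued lemma, so $\delta$ must be chosen smaller than all of these simultaneously and small enough that the concluding estimate against $1-p_2$ (rather than $1-p_1$) absorbs every cross term.
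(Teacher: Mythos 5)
Your overall architecture matches the paper's: get a uniform rank bound from Corollary \ref{RankBdnessSpProj}, interpose a threshold $\alpha'\in(\alpha_1,\alpha_2)$, reduce to the matrix-valued Lemma \ref{UnitizedLem3.3PhillpsRSHA2}, and let the gap $\alpha_2-\alpha_1$ absorb the approximation errors via the semicontinuity results. But the specific reduction you propose --- compressing by one fixed finite-rank projection $P_0$ --- has a genuine gap at its central step. You assert that $P_0$ can be chosen so that the range of $p_2(x)=p_{\alpha_2}(|\widetilde a(x)|)$ is \emph{contained} in $P_0H$ for every $x\in X$. That is not achievable: the eigenvectors of $|\widetilde a(x)|$ for eigenvalues below $\alpha_2$ move with $x$ and in general have nonzero components in $(1-P_0)H$ no matter how well $P_0a(x)P_0$ approximates $a(x)$; Lemma \ref{SemiContOfSpProj2} and Corollary \ref{SemiContinuityOfSpectrumProjOfC(X,K)} only give rank inequalities and approximate absorption of the form $\|pq-p\|<\epsilon$, never exact containment. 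Your later claim that ``the spectral projection $p_2$ of $\widehat a$ still has rank between $d/2$ and $n$ because the relevant eigenvectors were already contained in $P_0H$'' leans on this false exactness. The paper avoids the issue by choosing a matrix approximant $b\in C(X,M_m)$ of $a$ (using density of $\bigcup_k C(X,M_k)$), working with the spectral projection $q'(x)=p_{\alpha'}(|\widetilde b(x)|)$ of the \emph{approximant} at the intermediate level, and invoking Corollary \ref{SemiContinuityOfSpectrumProjOfC(X,K)} twice to get $\rank p_1(x)\le\rank q'(x)\le\rank p_2(x)$ together with the estimates $\|p_1(x)q'(x)-p_1(x)\|$ and $\|q'(x)p_2(x)-q'(x)\|$ small, which is what actually makes the cross terms controllable.

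Two further points are unaddressed. First, $P_0u_0(y)P_0+\lambda(1-P_0)$ and the analogously compressed path are not unitaries, whereas Lemma \ref{UnitizedLem3.3PhillpsRSHA2} requires an honest $u_0\in U_0(C(Y,M_m)^\sim)$ and an honest homotopy as input; you would have to correct them to nearby genuine unitaries and verify the correction preserves the needed estimates (the paper instead directly approximates $h_0$ by a path $f_0$ in $U(C(Y,M_m)^\sim)$, which is the standard and cleaner route). Second, the conclusion demands $u|_Y=u_0$ and $h(t)|_Y=h_0(t)$ \emph{exactly}, but your reassembled $u(x)=\widehat u(x)+\lambda(1-P_0)$ only agrees with $u_0$ on $Y$ up to the compression error. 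The paper's proof ends with a perturbation step recovering exactness: since $f(t)|_Y=f_0(t)$ and $\|f_0-h_0\|$ is small, one can deform $f$ to a path $h$ with $h(t)|_Y=h_0(t)$ and $\|h-f\|$ small, and then re-run the estimate against $1-p_2$. Without such a step your construction proves a weaker statement than the lemma asserts.
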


\begin{proof}
Let $\epsilon,$ $\alpha_1,$ $\alpha_2,$ $X,$ $Y,$ $a,$ $p_1,$ and $p_2$
satisfy the hypothesis of the lemma, and let $M=2\|\widetilde a\|.$   Note that
$M\geq \|\widetilde a\|\geq 1.$

First of all, it is clear that there exists some $c\in C(X, \KO)_{s.a.}$
such that $|\widetilde a|=c+1.$  Denote $c+1$ by $\widetilde c.$  Note that
$\|\widetilde c\|=\|\widetilde a\|,$ since $(\widetilde c)^2=(\widetilde a)^*(\widetilde a).$
Let
$\alpha'=\frac{\alpha_1+\alpha_2}{2},$ and for each $x\in X,$ let
$p'(x)=p_{\alpha'}(|\widetilde a(x)|).$  Note that for all $x\in X,$ we have 
$p_2(x)\geq p'(x)\geq p_1(x)\geq d/2,$ and so we have $$\rank(p_2(x))\geq
\rank(p'(x))\geq \rank(p_1(x))\geq d/2.$$

By Lemma \ref{RankBdnessSpProj}, there exists $n\in \N$ such that
$\rank(p_2(x))=\rank(p_{\alpha_2}(\widetilde c))\leq n$ for all $x\in X.$
Apply Lemma \ref{UnitizedLem3.3PhillpsRSHA2} to $\epsilon/(16M)>0,$
$1>\alpha'>0,$ $n,$ and $M,$ to get $\delta_1>0.$  Without loss of
generality, assume that $\delta_1<\epsilon/(16M).$  Apply  Corollary
\ref{SemiContinuityOfSpectrumProjOfC(X,K)} to $\delta_1/(4M)$ in place of
$\epsilon,$ $\alpha_1,$ $\alpha'$ in place of $\alpha_2,$ and $M,$ to get
$\sigma_1>0.$  Apply Corollary
\ref{SemiContinuityOfSpectrumProjOfC(X,K)} again to $\delta_1/(4M)$ in place
of $\epsilon,$ $\alpha'$ in place of $\alpha_1,$ $\alpha_2,$ and $M$ to get
$\sigma_2>0.$  Let $$\delta=\min\{\epsilon/(16M), \delta_1/(16M),
\sigma_1/(16M),
\sigma_2/(16M), \alpha_2/(16M)\}.$$  Now let $u_0\in U_0(C(Y, \KO)^\sim)$ be a
unitary such that (\ref{Eq3.[StablizedLem3.3Phillips].1}) holds, and let
$h_0\colon [0, 1]\rightarrow U(C(Y, \KO)^\sim)$ be a homotopy from $1$ to $u_0.$

For each $k\in \N,$ embed $M_k$ into $M_{k+1}$ in the standard, and embed
$M_k$ into $\KO$ in the standard way.  Then we have 
$\KO=\overline{\bigcup_{k\geq 1}M_k}$ and
$\widetilde{\KO}=\overline{\bigcup_{k\geq 1}\widetilde{M_k}},$ where the
adjoined identity of each $\widetilde{M_k}$ is the same as the adjoined
identity of $\widetilde{\KO}.$  We will use $1$ to denote the adjoined
identity of $\widetilde{\KO}$ and $\widetilde{M_k},$ for $k\geq 1.$  The
above embeddings give the embedding of $C(X, M_k)$ into $C(X, M_{k+1})$
and into then $C(X, \KO).$  Then $C(X, \KO)=\overline{\bigcup_{k\geq 1} C(X,
M_k)}$ and $C(X, \KO)^\sim=\overline{\bigcup_{k\geq 1} C(X,
M_k)^\sim}.$  Again, we assume that the adjoined identity of $C(X,
\KO)^\sim$ is the same as the adjoined identity of $C(X, M_k)^\sim$ for
every $k\geq 1.$  We will use $1_X$ to denote the adjoined identity of
$C(X, \KO)^\sim$ and $C(X, M_k)^\sim$ for all $k\geq 1.$  Similarly, we use
$1_Y$ to denote the adjoined identity of $C(Y, \KO)^\sim$ and $C(Y,
M_k)^\sim$ for all $k\geq 1.$   

Then, we can find some $m\in \N,$ some $b\in C(X, M_m),$ and some homotopy
$$f_0\colon [0, 1]\rightarrow U(C(Y, M_m)^\sim)$$
such that 
\begin{eqnarray}
\label{Eq3.[StablizedLem3.3Phillips].2}
&\|a-b\|<\delta/(8M),\;\; \|\widetilde a-\widetilde b\|<\delta/(8M),\;\;
\Bigl\| |\widetilde b|-\widetilde c\Bigr\|<\delta/(8M)\\
\label{Eq3.[StablizedLem3.3Phillips].3}
&\|\widetilde b\|\leq M\\
\label{Eq3.[StablizedLem3.3Phillips].4}
&f_0(0)=1 \text{ and }\|f_0-h_0\|<\delta/(8M),
\end{eqnarray}
where $\widetilde b=b+1.$ Let $v_0=f_0(1).$ Then $\|v_0-u_0\|<\delta/(8M).$
Let $b'\in C(X, M_m)_{s.a.}$ be such that $|\widetilde b|=b'+1.$ Then
$\|b'+1\|=\|\widetilde b\|\leq M.$
Then (\ref{Eq3.[StablizedLem3.3Phillips].2}) implies that
\begin{equation}
\label{Eq3.[StablizedLem3.3Phillips].5}
\|b'-c\|<\delta/(8M).
\end{equation}
For each $x\in X,$ let $q'(x)=p_{\alpha'}(|\widetilde b(x)|)$ and let 
$q_2(x)=p_{\alpha_2}(|\widetilde b(x)|).$
By the choice of $\sigma_1,$
which is greater than $\delta/(8M),$ we have
(the space $X,$ and elements $a$ and $b$  in
Corollary
\ref{SemiContinuityOfSpectrumProjOfC(X,K)} are taken to be $X,$ $c$ and $b',$
respectively)
\begin{equation}
\label{Eq3.[StablizedLem3.3Phillips].6}
\|p_1(x)q'(x)-p_1(x)\|<\delta_1/(4M)\text{ and }
\rank(p_1(x))\leq \rank(q'(x)),
\end{equation}
for all $x\in X.$
By the choice of $\sigma_2,$ we have (the space $X,$ and the elements $a$
and $b$  in Corollary \ref{SemiContinuityOfSpectrumProjOfC(X,K)} are taken
to be $X,$ $b'$ and $c,$ respectively)
\begin{equation}
\label{Eq3.[StablizedLem3.3Phillips].7}
\|q'(x)p_2(x)-q'(x)\|\leq \delta_1/(4M)\text{ and } \rank(q'(x))\leq 
\rank(p_2(x)),
\end{equation}
for all $x\in X.$
Then 
\begin{equation}
\label{Eq3.[StablizedLem3.3Phillips].8}
n\geq \rank(p_2(x))\geq \rank(q'(x))\geq \rank(p_1(x))\geq d/2.
\end{equation}

Now, by (\ref{Eq3.[StablizedLem3.3Phillips].2}), for all $x\in Y,$ 
we have
\begin{align*}
&\left\|\bigl[v_0(x)|\widetilde b(x)|-\widetilde b(x)\bigr]
-\bigl[u_0(x)|\widetilde a(x)|-\widetilde a(x)\bigr]\right\|\\
&\hspace*{3em}\mbox{}\leq\left\|v_0(x)|\widetilde b(x)|-u_0(x)|\widetilde a(x)|\right\|
+\|\widetilde b(x)-\widetilde a(x)\|\\
&\hspace*{3em}\mbox{}\leq
 \left\|v_0(x)|\widetilde b(x)|-v_0(x)|\widetilde a(x)|\right\|
+\Bigl\|v_0(x)|\widetilde a(x)|-u_0(x)|\widetilde a(x)|\Bigr\|
+\delta/(8M)\\
&\hspace*{3em}\mbox{}< 2\delta/(8M)+\delta/8\leq 3\delta/8.
\end{align*}

Also, by (\ref{Eq3.[StablizedLem3.3Phillips].6}), for all $x\in X,$  we have 
\begin{align*}
&\|(1-p_1(x))(1-q'(x))-(1-q'(x))\|\\
&\hspace*{3em}\mbox{}= \|1-q'(x)-p_1(x)+p_1q'(x)-1+q'(x)\|\\
&\hspace*{3em}\mbox{}= \|p_1(x)q'(x)-p_1(x)\|\\
&\hspace*{3em}\mbox{}< \delta_1/(4M).
\end{align*}
Then combining the above two calculations and 
(\ref{Eq3.[StablizedLem3.3Phillips].1}),  we have
\begin{align*}
&\left\|\bigl[v_0(x)|\widetilde b(x)|-\widetilde
b(x)\bigr]\bigl[1-q'(x)\bigr]\right\|\\
&\hspace*{3em}\mbox{}\leq
\left\|\bigl[v_0(x)|\widetilde b(x)|-\widetilde
b(x)\bigr]\bigl[1-p_1(x)\bigr]\bigl[1-q'(x)\bigr]\right\|\\
&\hspace*{3em}\mbox{}\hspace*{3em}\mbox{}+
\left\|\bigl[v_0(x)|\widetilde b(x)|-\widetilde
b(x)\bigr]\Bigl\{[1-q'(x)]-\bigl[1-p_1(x)\bigr]\bigl[1-q'(x)\bigr]
\Bigr\}\right\|\\
&\hspace*{3em}\mbox{}\leq
\left\|\bigl[v_0(x)|\widetilde b(x)|-\widetilde
b(x)\bigr]\bigl[1-p_1(x)\bigr]\right\|
+ 2M\left\|[1-q'(x)]-\bigl[1-p_1(x)\bigr]\bigl[1-q'(x)\bigr]
\right\|\\
&\hspace*{3em}\mbox{}<
\left\|
\left\{\bigl[v_0(x)|\widetilde b(x)|-\widetilde b(x)\bigr]
-\bigl[u_0(x)|\widetilde a(x)|-\widetilde a(x)\bigr]\right\}
\bigl[1-p_1(x)\bigr]
\right\|\\
&\hspace*{3em}\mbox{}
\hspace*{3em}\mbox{} +\left\|
\bigl[u_0(x)|\widetilde a(x)|-\widetilde a(x)\bigr]
\bigl[1-p_1(x)\bigr]
\right\|
+\delta_1/2\\
&\hspace*{3em}\mbox{}\leq
\left\|
\bigl[v_0(x)|\widetilde b(x)|-\widetilde b(x)\bigr]
-\bigl[u_0(x)|\widetilde a(x)|-\widetilde a(x)\bigr]
\right\| +\delta+\delta_1/2\\
&\hspace*{3em}\mbox{}< 3\delta/8+\delta+\delta_1/2<\delta_1
\end{align*}
for all $x\in Y.$
Then by the choice of $\delta_1$ (with $X,$ $Y,$ $m,$ $a$ $\widetilde p,$
$w_t,$ and $u_0$
in Lemma \ref{UnitizedLem3.3PhillpsRSHA2}
taken to be, respectively, $X,$ $Y,$ $m,$ $b,$ $q',$ $f_0$ and $v_0$), 
there exists a unitary $v\in U_0(C(X, M_m)^\sim)\subseteq U_0(C(X,
\KO)^\sim)$ and a homotopy $f\colon [0, 1]\rightarrow U(C(X, M_m)^\sim)
\subseteq U(C(X, \KO)^\sim),$ such that $f(0)=1,$ $f(1)=v,$ 
$f(t)|_Y=f_0(t)$ for all $t,$ and $v|_Y=v_0,$ and
that 
\begin{equation}
\label{Eq3.[StablizedLem3.3Phillips].9}
\left\|\bigl[v(x)|\widetilde b(x)|-\widetilde
b(x)\bigr]\bigl[1-q'(x)\bigr]\right\|<\epsilon/(16M),\;\;\;\text{for
all }x\in X.
\end{equation}

Since, by (\ref{Eq3.[StablizedLem3.3Phillips].4}),
$\|f_0-h_0\|<\delta/(8M),$ and since $f(t)|_Y=f_0(t)$ for all $t\in [0,
1],$ there exists $h\colon [0, 1]\rightarrow U(C(X, \KO)^\sim)$ such that
$h(0)=1,$ $h(t)|_Y=h_0(t)$ for all $t,$ and $\|h-f\|<\delta/(4M).$  Let
$u=h(1).$  Then $\|u-v\|<\delta/(4M),$ and $u|_Y=h_0(1)=u_0.$
By (\ref{Eq3.[StablizedLem3.3Phillips].2}), we have 
\begin{align*}
&\left\|
\bigl[u(x)|\widetilde a(x)|-\widetilde a(x) \bigr]-
\bigl[v(x)|\widetilde b(x)|-\widetilde b(x)\bigr]
\right\|\\
&\hspace*{3em}\mbox{}\leq
\left\|
u(x)|\widetilde a(x)|- v(x)|\widetilde b(x)|\right\|
+\left\|
\widetilde a(x)-\widetilde b(x)
\right\|\\
&\hspace*{3em}\mbox{}\leq
\left\| u(x)|\widetilde a(x)|-u(x)|\widetilde b(x)| \right\|
+\left\| u(x)|\widetilde b(x)|- v(x)|\widetilde b(x)| \right\|
+\delta/(8M)\\
&\hspace*{3em}\mbox{}<2\delta/(8M)+\delta/4\leq \delta/2,
\end{align*}
for all $x\in X.$
Also by (\ref{Eq3.[StablizedLem3.3Phillips].7}), we have 
$$\|[1-q'(x)][1-p_2(x)]-[1-p_2(x)]\|<\delta_1/(4M)$$
for all $x\in
X.$ 
Thus by the two estimates above and 
(\ref{Eq3.[StablizedLem3.3Phillips].9}), for all $x\in X,$ we have
\begin{align*}
&\left\|
\bigl[u(x)|\widetilde a(x)|-\widetilde a(x) \bigr]
\bigl[1-p_2(x) \bigr]
\right\|\\
&\hspace*{3em}\mbox{}\leq
\left\|
\bigl[u(x)|\widetilde a(x)|-\widetilde a(x)\bigr]
\bigl[1-q'(x) \bigr]
\bigl[1-p_2(x) \bigr]
\right\|\\
&\hspace*{3em}\mbox{}
\hspace*{3em}\mbox{}+\left\|
\bigl[u(x)|\widetilde a(x)|-\widetilde a(x)\bigr]
\bigl\{
\bigl[1-p_2(x) \bigr]
-\bigl[1-q'(x) \bigr]
\bigl[1-p_2(x) \bigr]
\bigr\}
\right\|\\
&\hspace*{3em}\mbox{}\leq
\left\|
\bigl[u(x)|\widetilde a(x)|-\widetilde a(x)\bigr]
\bigl[1-q'(x) \bigr]
\right\|
+2M\delta_1/(4M)\\
&\hspace*{3em}\mbox{}\leq
\left\|
\bigl\{
\bigl[u(x)|\widetilde a(x)|-\widetilde a(x)\bigr]-
\bigl[v(x)|\widetilde b(x)|-\widetilde b(x)\bigr]
\bigr\}
\bigl[1-q'(x) \bigr]
\right\|\\
&\hspace*{3em}\mbox{}
\hspace*{3em}\mbox{}+\left\|\bigl[v(x)|\widetilde b(x)|-\widetilde
b(x)\bigr]\bigl[1-q'(x)\bigr]\right\|
+2M\delta_1/(4M)\\
&\hspace*{3em}\mbox{}<
\left\|
\bigl[u(x)|\widetilde a(x)|-\widetilde a(x)\bigr]-
\bigl[v(x)|\widetilde b(x)|-\widetilde b(x)\bigr]
\right\|\\
&\hspace*{3em}\mbox{}
\hspace*{3em}\mbox{}+\epsilon/(16M)+2M\delta_1/(4M)\\
&\hspace*{3em}\mbox{}<
\delta/2+\epsilon/(16M)+2M\delta_1/(4M)<\epsilon.\\
\end{align*}
This finishes the proof.
\end{proof}

 Let $A,$ $B,$ and $C$ be $\Cstar$-algebras.  Let
$\phi\colon A\rightarrow C$ and $R\colon B\rightarrow C$ be *-homomorphisms.  Let
$D=\{(a, b)\in A\oplus B\colon \phi(a)=R(b)\}.$  If we unitize $A,$ $B,$ $C,$
$\phi$ and $R,$ and let $$E=\{((a, \lambda), (b, \mu))\in \widetilde A\oplus
\widetilde B\colon \widetilde\phi(a)=\widetilde R(b)\},$$  then $((a, \lambda), (b,
\mu))\in
E$ if and only if $(a, b)\in D$ and $\lambda=\mu.$  So the map
$E\rightarrow \widetilde D$ defined by $((a, \lambda), (b, \lambda))\mapsto ((a,
b), \lambda)$ is a *-isomorphism.
Thus, given a SRSH system $$\left(X_1, A^{(1)}, \left(X_i, X_i^{(0)}, \phi_i, R_i,
A^{(i)}\right)_{i=2}^n\right)$$ and $A=A^{(n)},$ we can inductively unitize all the
algebras and maps to obtain the unitized system
$$\left(X_1, \widetilde{A^{(1)}}, \left(X_i, X_i^{(0)}, \widetilde{\phi_i},
\widetilde{R_i}, \widetilde{A^{(i)}}\right)_{i=2}^n\right).$$  Then $(a_i,
\lambda_i)_{i=1}^n\in \widetilde A$ if and only if $(a_i)_{i=1}^n\in A$ and
$\lambda_1=\cdots =\lambda_n;$ and each element $((a_i)_{i=1}^n,
\lambda)\in \widetilde A$ can be uniquely written as $(a_i, \lambda)_{i=1}^n.$
Also, if $a\in \widetilde A$ and $x\in X_k$ for some $k,$ then $a=(a_i,
\lambda)_{i=1}^n$ for some $(a_1, \ldots, a_n)\in A,$ and we will use $a(x)$ to
denote $(a_k, \lambda)(x)=(a_k(x), \lambda).$

\begin{lem}
\label{PrepareForApproxPolarDecomp1}
Let $$\left(X_1, A^{(1)}, \left(X_i, X_i^{(0)}, \phi_i, R_i,
A^{(i)}\right)_{i=2}^n\right)$$ be a SRSH system and let $A=A^{(n)}.$ Let $Y$ be a compact
Hausdorff space and let $\phi\colon A\rightarrow C(Y, \KO)$ be a *-homomorphism
(not necessarily non-vanishing). Let $\widetilde \phi$ denote the unitization
of $\phi.$   Let $\epsilon>0,$ let $1>\alpha>0,$ let $a\in
A,$ and let $\widetilde a=a+1\in \widetilde A.$  Let $u\in U_0(\widetilde A)$ be a
unitary
such that 
for all $x\in \bigsqcup_{i=1}^n (X_i\setminus X_i^{(0)}),$ 
\begin{equation}
\label{Eq3.[PrepareForApproxPolarDecomp1].1}
\left\|
\bigl[u(x)|\widetilde a(x)|-\widetilde a(x)\bigr]
\bigl[1-p_\alpha(|\widetilde a(x)|)\bigr]
\right\|<\epsilon.
\end{equation}
Then $\widetilde \phi(u)\in U_0(\widetilde{C(Y, \KO)})$ and all $y\in Y,$ we have
\begin{equation}
\label{Eq3.[PrepareForApproxPolarDecomp1].2}
\left\|
\bigl[\widetilde\phi(u)(y)|\widetilde \phi(\widetilde
a)(y)|-\widetilde\phi(\widetilde a)(y)\bigr]
\bigl[1-p_\alpha(|\widetilde\phi(\widetilde a)(y)|)\bigr]
\right\|<\epsilon.
\end{equation}
\end{lem}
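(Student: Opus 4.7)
The plan is to evaluate at a point $y \in Y$ and reduce the estimate to the hypothesis via the representation theorem (Proposition \ref{CharaterOfKMapsFromSRSHA}). First, since $\widetilde\phi \colon \widetilde A \to \widetilde{C(Y,\KO)}$ is a unital *-homomorphism, it carries $U_0(\widetilde A)$ into $U_0(\widetilde{C(Y,\KO)})$ (a continuous path from $1$ to $u$ maps to one from $1$ to $\widetilde\phi(u)$), so the first conclusion is immediate. For the estimate, fix $y \in Y$. In the main case ${\mathrm{ev}}_y \circ \phi \neq 0,$ apply Proposition \ref{CharaterOfKMapsFromSRSHA} to the composition to get finitely many points $x_1, \ldots, x_m \in \bigsqcup_k(X_k \setminus X_k^{(0)})$ and isometries $w_1, \ldots, w_m \in B(H)$ with mutually orthogonal range projections $p_i = w_i w_i^*$ such that $\phi(a)(y) = \sum_i w_i a(x_i) w_i^*$ for every $a \in A.$

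Next I extend this to the unitization. Setting $p_\perp = 1 - \sum_i p_i \in \widetilde\KO$ and $\tau$ for the canonical character on $\widetilde A$ (with $\tau(\widetilde a) = 1$), for every $c \in \widetilde A$ written as $c_0 + \mu \cdot 1$ with $c_0 \in A$ and $\mu = \tau(c),$ the identity $\sum_i p_i + p_\perp = 1$ together with $w_i w_i^* = p_i$ gives
\[
\widetilde\phi(c)(y) \;=\; \sum_{i=1}^m w_i c(x_i) w_i^* + \tau(c)\, p_\perp.
\]
Because $w_i^* w_j = \delta_{ij}\cdot 1$ and $w_i^* p_\perp = p_\perp w_i = 0,$ the element on the right decomposes as a direct sum across the orthogonal subspaces $p_1 H, \ldots, p_m H, p_\perp H.$ Applying Corollary \ref{FunCalcOfMultipleDirectSum} to this decomposition yields
\[
|\widetilde\phi(\widetilde a)(y)| = \sum_i w_i |\widetilde a(x_i)| w_i^* + p_\perp,
\qquad
p_\alpha(|\widetilde\phi(\widetilde a)(y)|) = \sum_i w_i\, p_\alpha(|\widetilde a(x_i)|)\, w_i^*,
\]
where the $p_\perp$ summand drops out of the spectral projection because $\chi_\alpha(1) = 0$ (as $\alpha < 1$).

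Multiplying everything out, using the orthogonality of the blocks, the target expression becomes
\[
\sum_{i=1}^m w_i \bigl[u(x_i)|\widetilde a(x_i)| - \widetilde a(x_i)\bigr]\bigl[1 - p_\alpha(|\widetilde a(x_i)|)\bigr] w_i^* \;+\; (\tau(u) - 1)\, p_\perp,
\]
whose norm is the maximum of the block norms. Each $w_i(\cdots)w_i^*$ block is bounded by $\epsilon$ by hypothesis (\ref{Eq3.[PrepareForApproxPolarDecomp1].1}) at $x_i.$ For the scalar block, applying the canonical character $\tau_{\widetilde\KO}$ to the hypothesis at any point $x \in \bigsqcup_k(X_k \setminus X_k^{(0)}),$ together with $\tau_{\widetilde\KO}(|\widetilde a(x)|) = 1$ and $\tau_{\widetilde\KO}(p_\alpha(|\widetilde a(x)|)) = \chi_\alpha(1) = 0,$ yields $|\tau(u) - 1| < \epsilon.$ The degenerate case ${\mathrm{ev}}_y \circ \phi = 0$ is subsumed by taking the empty-sum convention ($m = 0,$ $p_\perp = 1$), and reduces to the same scalar estimate. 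The only real obstacle is bookkeeping the unitization arithmetic carefully so that the block-diagonal decomposition is genuinely valid in $\widetilde\KO$ rather than $\KO;$ once the $p_\perp$ term is accounted for, the rest is a direct computation.
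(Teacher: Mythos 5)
Your proof is correct and follows essentially the same route as the paper: evaluate at $y$, invoke Proposition \ref{CharaterOfKMapsFromSRSHA} to write ${\mathrm{ev}}_y\circ\phi$ as a finite block sum, add the complementary projection $p_\perp$ to handle the unitization, compute $p_\alpha(|\widetilde\phi(\widetilde a)(y)|)$ via Corollary \ref{FunCalcOfMultipleDirectSum}, and bound the result as a maximum of block norms together with the scalar estimate $|\tau(u)-1|<\epsilon$ obtained by applying the character to the hypothesis. The paper treats the case ${\mathrm{ev}}_y\circ\phi=0$ separately rather than by an empty-sum convention, but this is only a cosmetic difference.
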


\begin{proof}
Let $H$ denote the separable infinite dimensional Hilbert space and let
$1$ denote the  identity of $B(H).$  We identify the $\widetilde{\KO}$ 
with $\KO\oplus (\K \cdot 1)$ using the map $(a, \lambda)\mapsto
a+\lambda\cdot 1.$  For any compact Hausdorff space $Z,$ let $1_Z$ denote
the identity of $C(Z, B(H)).$  We identify the algebra $C(Z, \KO)\oplus (\K \cdot 1_Z)$
as a subalgebra of $C(Z, B(H))$ using the map 
$(a, \lambda \cdot 1_Z) \mapsto a+\lambda \cdot 1_Z.$  Then we identify 
$\widetilde{C(Z,
\KO)}$ with $C(Z, \KO)\oplus (\K \cdot 1_Z)\subseteq C(Z, B(H))$  using the map
$(f, \lambda)\mapsto f+\lambda \cdot 1_Z.$

Let $$\left(X_1, A^{(1)}, \left(X_i, X_i^{(0)}, \phi_i, R_i,
A^{(i)}\right)_{i=2}^n\right)$$ be a SRSH system and let $A=A^{(n)}.$ Let $Y$ be a compact
Hausdorff space and let $\phi\colon A\rightarrow C(Y, \KO)$ be a *-homomorphism
(not necessarily non-vanishing). Let $\widetilde \phi$ denote the unitization
of $\phi.$   Let $\epsilon>0,$ let $1>\alpha>0,$ let $a\in
A,$ and let $\widetilde a=a+1\in \widetilde A.$  Let $u\in U_0(\widetilde A)$ be a
unitary that satisfies (\ref{Eq3.[PrepareForApproxPolarDecomp1].1})
for all $x\in \bigsqcup_{i=1}^n (X_i\setminus X_i^{(0)}).$
With the above identifications, we can treat $\widetilde A$ as a subalgebra of
$C(X, B(H))$ using the maps $(b, \lambda)\mapsto b+\lambda 1_X,$ where $X$
is the total space of $A,$ and then the identity of $\widetilde
A$ is $1_X.$ So every element in $\widetilde A$ can be uniquely written as 
$((a_1, \lambda 1_{X_1}), \ldots, (a_n, \lambda 1_{X_n})),$ where
$\lambda\in \K$ and $(a_1, \ldots, a_n)\in A.$  
Then for all $b+\lambda 1_X\in \widetilde A,$ we have 
$\widetilde\phi(b+\lambda 1_X)=\phi(b)+\lambda 1_Y.$

It is clear that $\widetilde \phi(u)\in U_0(C(Y, \KO)^\sim).$ 
Fix $y\in Y.$  If the map $A\rightarrow \KO$ defined by $b\mapsto
\phi(b)(y)$ is the zero map, then for all $b\in A,$ we have $\widetilde \phi(\widetilde
b)(y)=1=|\widetilde\phi(\widetilde a)(y)|,$
and so $p_{\alpha}(|\widetilde\phi(\widetilde a)(y)|)=p_{\alpha}(1)=0.$  Since
$u=(v, \mu)\in U_0(\widetilde A)$ satisfies
(\ref{Eq3.[PrepareForApproxPolarDecomp1].1}), we have 
$|\mu-1|<\epsilon,$
and then the left side of (\ref{Eq3.[PrepareForApproxPolarDecomp1].2})
reduces to 
$\|[\mu\cdot 1-1][1-0]\|=|\mu-1|<\epsilon.$  So we can assume that
the map $A\rightarrow \KO$ given by $b\mapsto \phi(b)(y)$ is not the zero map.

Let $(p_i)_{i=1}^m$ be the family of mutually orthogonal projections in
$B(H),$ 
let $(w_i)_{i=1}^m$ be the family of isometries in $B(H)$ and 
let $(x_i)_{i=1}^m$ be the family of elements of $\bigsqcup_{k=1}^n
(X_k\setminus X_k^{(0)})$ that satisfy the conclusion of Proposition
\ref{CharaterOfKMapsFromSRSHA}.  Let
$p_{m+1}=1-\sum_{i=1}^m p_i.$
Then $(p_i)_{i=1}^{m+1}$ is still a mutually orthogonal family of
projections. For all $b+\lambda 1_X\in \widetilde A,$ we have 
\begin{align*}
\widetilde\phi(b+\lambda 1_X)(y)
&=\phi(b)(y)+\lambda 1
=\sum_{i=1}^m w_ib(x_i)w_i^*+\lambda\sum_{i=1}^m p_i
 +\lambda p_{m+1}\\\
&=\sum_{i=1}^m w_ib(x_i)w_i^*
 +\lambda\sum_{i=1}^m w_iw_i^*+\lambda p_{m+1}\\
&=\sum_{i=1}^m w_i(b(x_i)+\lambda \cdot 1)w_i^*
 +\lambda p_{m+1}\\
&=\sum_{i=1}^m w_i(b+\lambda 1_X)(x_i)w_i^*
 +\lambda p_{m+1}.
\end{align*}
Let $v\in A$ and $\mu\in \K$ satisfy $v+\mu 1_X=u.$
Then 
\begin{equation}
\label{Eq3.[PrepareForApproxPolarDecomp1].3}
\widetilde\phi(u)(y)=\widetilde \phi(v+\mu 1_X)=\sum_{i=1}^m
w_iu(x_i)w_i^*+\mu p_{m+1}.
\end{equation}
Also, we have
\begin{equation}
\label{Eq3.[PrepareForApproxPolarDecomp1].4}
\widetilde\phi(\widetilde a)(y)=\widetilde \phi(a+ 1_X)
=\sum_{i=1}^m w_i\tilde a(x_i)w_i^*+p_{m+1}
\end{equation}
and 
\begin{equation}
\label{Eq3.[PrepareForApproxPolarDecomp1].5}
|\widetilde\phi(\widetilde a)(y)|=\widetilde\phi(|\widetilde a|)(y)
=\sum_{i=1}^m w_i|\widetilde a|(x_i)w_i^*+p_{m+1}
=\sum_{i=1}^m w_i|\widetilde a(x_i)|w_i^*+p_{m+1}.
\end{equation}

Then (\ref{Eq3.[PrepareForApproxPolarDecomp1].3}) and 
(\ref{Eq3.[PrepareForApproxPolarDecomp1].5}) give

\begin{equation}
\label{Eq3.[PrepareForApproxPolarDecomp1].7}
\widetilde\phi(u)(y)|\widetilde\phi(\widetilde a)(y)|
=\sum_{i=1}^m w_iu(x_i)|\widetilde a(x_i)|w_i^*+\mu p_{m+1}.
\end{equation}

Also, by Corollary \ref{FunCalcOfMultipleDirectSum}, we have

$$p_\alpha(|\widetilde\phi(\widetilde a)(y)|)
=p_\alpha\left(\sum_{i=1}^m w_i|\widetilde
a(x_i)|w_i^*+p_{m+1}\right)
=\sum_{i=1}^m p_\alpha(w_i|\widetilde
a(x_i)|w_i^*)+p_\alpha(p_{m+1}),$$
where the functional calculus in the last expression is taken in
$p_iB(H)p_i$ for $i \in \{1, \ldots, m+1\}.$  Now, for each $i \in \{1, \ldots, m\},$ 
the map $B(H)\rightarrow p_iB(H)p_i$ defined by
$T\mapsto w_iTw_i^*$ is a unital *-isomorphism,  so we have 
$p_\alpha(w_i|\widetilde a(x_i)|w_i^*)=w_ip_\alpha(|\widetilde a(x_i)|)w_i^*,$ where
the last functional calculus is now taken in $B(H).$  So we have
\begin{equation}
\label{Eq3.[PrepareForApproxPolarDecomp1].6}
p_\alpha(|\widetilde\phi(\widetilde a)(y)|)
=\sum_{i=1}^m
w_ip_\alpha(|\widetilde a(x_i)|)w_i^*, 
\end{equation}
(functional calculus on both sides is taken in $B(H),$ i.e.\ the identity
used in the functional calculus is $\id_H$ on both sides).

Note that 
(\ref{Eq3.[PrepareForApproxPolarDecomp1].1}) implies that
$|\mu-1|<\epsilon.$  Then from 
(\ref{Eq3.[PrepareForApproxPolarDecomp1].1}),
(\ref{Eq3.[PrepareForApproxPolarDecomp1].4}), 
(\ref{Eq3.[PrepareForApproxPolarDecomp1].7}), and
(\ref{Eq3.[PrepareForApproxPolarDecomp1].6}), we have
\begin{align*}
&\left\|
\bigl[\widetilde\phi(u)(y)|\widetilde \phi(\widetilde a)(y)|-\widetilde\phi(\widetilde a)(y)\bigr]
\bigl[1-p_\alpha(|\widetilde\phi(\widetilde a)(y)|)\bigr]
\right\|\\
&\hspace*{3em}\mbox{} =
\left\|
\Bigl[(\mu-1)p_{m+1}+
\sum_{i=1}^m w_i\bigr[u(x_i)|\widetilde a(x_i)|
-\widetilde a(x_i)\bigr]w_i^*
\Bigr]\Bigr.\right.\\
&\hspace*{3em}\hspace*{3em}\mbox{}\cdot
\left.\Bigl.\Bigl[p_{m+1}+
\sum_{i=1}^m
w_i\bigl[
1-p_\alpha(|\widetilde a(x_i)|)
\bigr]w_i^*
\Bigr]
\right\|\\
&\hspace*{3em}\mbox{} =
\left\|
(\mu-1)p_{m+1}+
\sum_{i=1}^m w_i\bigr[u(x_i)|\widetilde a(x_i)|
-\widetilde a(x_i)\bigr]
\bigl[
1-p_\alpha(|\widetilde a(x_i)|)
\bigr]w_i^*
\right\|\\
&\hspace*{3em}\mbox{} =\max\left(\{|\mu-1|\}\cup\left\{\left\|\bigr[u(x_i)|\widetilde a(x_i)|
-\widetilde a(x_i)\bigr]\bigl[1-p_\alpha(|\widetilde a(x_i)|) \bigr]
\right\|\colon 1\leq i\leq m\right\}\right)\\
&\hspace*{3em}\mbox{} < \epsilon.
\end{align*}
This estimate holds for all $y\in Y,$ so result follows.
\end{proof}

\begin{lem}
\label{PrepareForApproxPolarDecomp2}
Let $$\left(X_1, A^{(1)}, \left(X_i, X_i^{(0)}, \phi_i, R_i,
A^{(i)}\right)_{i=2}^n\right)$$ be a SRSH system, let $A=A^{(n)}$ and let $X$ be the
total space.  Suppose that $\dim(X)=d<\infty.$  
Let $1>\epsilon>0$ and  let $1>\alpha>0.$  Let $a\in A,$ and let
$\widetilde a=a+1\in \widetilde A.$  Suppose that for all $x\in X,$ we have
$\rank(p_{\alpha/2}(|\widetilde a(x)|))\geq d/2.$  Then there exists
 $u\in U_0(\widetilde A)$
such that for all $x\in X,$ we have
\begin{equation}
\label{Eq3.[PrepareForApproxPolarDecomp2].0}
\left\|\bigl[u(x)|\widetilde a(x)|-\widetilde
a(x)\bigr]\bigl[1-p_{\alpha}(|\widetilde a(x)|)\bigr]\right\|<\epsilon.
\end{equation}
\end{lem}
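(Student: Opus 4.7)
The plan is to proceed by induction on the length $n$ of the SRSH system, proving a slightly strengthened statement: for any $0<\alpha_1<\alpha_2<1$ with $\rank(p_{\alpha_1}(|\widetilde a(x)|))\geq d/2$ for all $x\in X$, there exist $u\in U_0(\widetilde A)$ \emph{together with} a homotopy $h\colon[0,1]\to U(\widetilde A)$ from $1$ to $u$ satisfying
$$\|[u(x)|\widetilde a(x)|-\widetilde a(x)][1-p_{\alpha_2}(|\widetilde a(x)|)]\|<\epsilon$$
for all $x\in X.$ The stated lemma is the case $\alpha_1=\alpha/2,$ $\alpha_2=\alpha.$ Explicitly tracking the homotopy is essential because Lemma~\ref{StablizedLem3.3Phillips} demands one as input at each extension step. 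The base case $n=1$ follows immediately from Lemma~\ref{StablizedLem3.3Phillips} applied with $X=X_1$ and $Y=\varnothing$ (the latter making the hypothesis on $u_0$ vacuous).

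For the inductive step, let $a=(a_1,\ldots,a_{n+1})\in A^{(n+1)}$ and fix an intermediate threshold $\beta\in(\alpha_1,\alpha_2).$ First, invoke Lemma~\ref{StablizedLem3.3Phillips} with $X=X_{n+1},$ $Y=X_{n+1}^{(0)},$ thresholds $\beta<\alpha_2,$ tolerance $\epsilon,$ and element $a_{n+1}$ to extract a working tolerance $\delta>0;$ the rank bound $\rank(p_\beta(|\widetilde{a_{n+1}}(x)|))\geq \dim(X_{n+1})/2$ is inherited from the hypothesis since $p_\beta\geq p_{\alpha_1}$ and $\dim(X_{n+1})\leq d.$ Next, apply the inductive hypothesis to $a'=(a_1,\ldots,a_n)\in A^{(n)}$ with thresholds $\alpha_1,\beta$ and tolerance $\delta$ to obtain $u'\in U_0(\widetilde{A^{(n)}})$ and a homotopy $h'$ satisfying the intermediate $p_\beta$-estimate on $X'=\bigsqcup_{i=1}^n X_i.$ Pushing forward through $\widetilde{\phi_{n+1}},$ set $u_0=\widetilde{\phi_{n+1}}(u')$ and $h_0=\widetilde{\phi_{n+1}}\circ h';$ Lemma~\ref{PrepareForApproxPolarDecomp1} then gives the same $\delta$-estimate for $u_0$ on $X_{n+1}^{(0)}$ with respect to $\widetilde{\phi_{n+1}}(\widetilde{a'}),$ which coincides with $\widetilde a$ on $X_{n+1}^{(0)}$ by the SRSH compatibility condition. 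Feeding $(u_0,h_0)$ back into Lemma~\ref{StablizedLem3.3Phillips} yields $(u_{n+1},h_{n+1})$ on $X_{n+1}$ extending $(u_0,h_0)$ and satisfying the desired $p_{\alpha_2}$-estimate within $\epsilon.$ The pair $u=(u',u_{n+1})$ and path $h(t)=(h'(t),h_{n+1}(t))$ then lie in $\widetilde{A^{(n+1)}}$ because the compatibility $\widetilde{R_{n+1}}(u_{n+1})=u_0=\widetilde{\phi_{n+1}}(u')$ (and the analogous identity at each $t$) is built into the construction. On $X'$ the estimate propagates from $p_\beta$ to $p_{\alpha_2}$ via the identity $(1-p_{\alpha_2})=(1-p_\beta)(1-p_{\alpha_2}),$ so that $\|[\,\cdot\,](1-p_{\alpha_2})\|\leq\|[\,\cdot\,](1-p_\beta)\|<\delta\leq\epsilon.$

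The main obstacle is the careful orchestration of tolerances and homotopies: the inductive call must deliver a continuous path from the identity, not merely a unitary, so that its push-forward is a valid input to Lemma~\ref{StablizedLem3.3Phillips}, and the resulting paths must glue into a single homotopy inside $\widetilde{A^{(n+1)}}.$ The auxiliary threshold $\beta$ is indispensable because Lemma~\ref{StablizedLem3.3Phillips} strictly requires its two thresholds to be distinct, forcing a separation between the thresholds used at the outer extension step and the one passed to the inductive call; the slack $\alpha/2<\alpha$ in the hypothesis is exactly what provides room for such a $\beta.$
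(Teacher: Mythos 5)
Your proposal is correct and is essentially the paper's argument: both proofs interpolate a strictly increasing chain of spectral thresholds between $\alpha/2$ and $\alpha$, choose the tolerances in reverse order via Lemma \ref{StablizedLem3.3Phillips}, and then build the unitary level by level, using Lemma \ref{PrepareForApproxPolarDecomp1} to transport the estimate (together with the homotopy from $1$) across each attaching map before extending over the next base space. The only difference is organizational — you package the finite iteration as an induction on the length of the system with a strengthened two-threshold statement, while the paper runs the same double induction explicitly inside one proof after reducing to the case where $X_1$ is a point.
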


\begin{proof}
First of all, if we let $x_0\in X_1,$ let $X_1^{(0)}=X_0=\{x_0\},$ let
$R_1\colon  C(X_1, \KO)\rightarrow C(X_1^{(0)}, \KO)$ be the restriction map, 
let $\phi_1\colon C(X_0, \KO)\rightarrow C(X_1^{(0)}, \KO)$ be the identity map,
and let $A^{(0)}=C(X_0, \KO),$
then $$\left(X_0, A^{(0)}, \left(X_i, X_i^{(0)}, \phi_i, R_i, A^{(i)}\right)_{k=1}^n\right)$$ is
again a SRSH system that gives the same SRSHA as the original system.  This
change does not affect any of the hypotheses or the conclusion of the
lemma.  Thus without loss of generality, assume that $X_1$ is just one
point set, and so $A^{(1)}\cong \KO.$

Now suppose $$\left(X_1, A^{(1)}, \left(X_i, X_i^{(0)}, \phi_i, R_i,
A^{(i)}\right)_{i=2}^n\right),$$ where $X_1$ is a one-point set, $1>\epsilon>0,$
$1>\alpha>0,$ and $a\in A$ satisfy the hypothesis of the lemma.  
Write $a = (a_1, \ldots, a_n)$ with $a_k \in C(X_k, \KO)$ for 
$k \in \{ 1, \ldots n\}.$

Choose $\alpha_1, \alpha_2, \ldots, \alpha_n\in \R$ such that
$0<\alpha/2=\alpha_1<\cdots<\alpha_n=\alpha.$  Now we inductively pick
$\delta_1, \ldots, \delta_n>0.$ Let $\delta_n=\epsilon/2.$ Suppose that
$\delta_k>0$ is picked.  Note that $\dim(X_k)\leq \dim(X)=d,$ and that for
each $x\in X_k,$ we have 
$$\rank(p_{\alpha_{k-1}}(|\widetilde a_k(x)|))=\rank(p_{\alpha_{k-1}}(|\widetilde
a(x)|))\geq \rank(p_{\alpha/2}(|\widetilde a(x)|))\geq d/2.$$
So we can apply Lemma \ref{StablizedLem3.3Phillips}, with $\epsilon,
\alpha_1, \alpha_2,$ $X,$ $Y,$ and $a$ in Lemma 
\ref{StablizedLem3.3Phillips} respectively taken to be 
$\min\{\delta_k/2, \epsilon/(2^k)\},$ $\alpha_{k-1},$ $\alpha_k,$ $X_k,$
$X_k^{(0)},$ and $a_k,$ to obtain $\delta_{k-1}'.$  Set
$\delta_{k-1}=\min\{\delta_k/2, \delta_{k-1}'\}.$
Next we inductively choose $u_k\in C(X_k, \KO)^\sim$ for $k \in \{1, \ldots, n\},$ 
and homotopies
$h_k\colon [0, 1]\rightarrow U(C(X_k, \KO)^\sim$ for $k \in \{1, \ldots, n\},$  such that
\begin{eqnarray}
\label{Eq3.[PrepareForApproxPolarDecomp2].1}
&&h_k(0)=1, h_k(1)=u_k,\;\;\text{for } k \in \{1, \ldots, n\},\\
\label{Eq3.[PrepareForApproxPolarDecomp2].2}
&&(h_1(t), \ldots h_{k}(t))\in U(\widetilde{A^{(k)}}),\;\;
\text{for }t\in [0, 1]\\
\label{Eq3.[PrepareForApproxPolarDecomp2].3}
&& (u_1, \ldots, u_k)\in U_0(\widetilde{A^{(k)}}),\;\;\text{for }k \in \{1, \ldots, n\},\\
\label{Eq3.[PrepareForApproxPolarDecomp2].4}
&&\left\|\bigl[u_k(x)|\widetilde a_k(x)|-\widetilde
a_k(x)\bigr](1-p_{\alpha_k}(|\widetilde a_k(x)))\right\|<\delta_k,\;\;
\text{for all } x\in X_k.
\end{eqnarray}

For each $\xi=(\xi_1, \ldots, \xi_n)\in \widetilde A,$ we will use $\xi^{(k)}$
to denote the first $k$ entries of $\xi.$ Note that 
$(\xi_1, \ldots, \xi_k)\in \widetilde{A^{(k)}}.$
Since $X_1$ is just a one-point space, it is clear that there exists $u_1\in
U_0(\widetilde{A^{(1)}})$ and a homotopy $h_1\colon [0, 1]\rightarrow
U(\widetilde{A^{(1)}})$ such that $h_1(0)=1$ and $h_1(1)=u_1,$ and that 
(\ref{Eq3.[PrepareForApproxPolarDecomp2].1}), 
(\ref{Eq3.[PrepareForApproxPolarDecomp2].3}), and
(\ref{Eq3.[PrepareForApproxPolarDecomp2].4})
hold for $k=1.$
Suppose that $u_k$ and $h_k$ are chosen to satisfy 
(\ref{Eq3.[PrepareForApproxPolarDecomp2].1}), 
(\ref{Eq3.[PrepareForApproxPolarDecomp2].2}), 
(\ref{Eq3.[PrepareForApproxPolarDecomp2].3}), and 
(\ref{Eq3.[PrepareForApproxPolarDecomp2].4}).

Let $v=\widetilde\phi_{k+1}(u^{(k)}),$ where $u^{(k)}=(u_1, \ldots, u_k)\in
\widetilde{A^{(k)}},$ and define $$f_0\colon [0, 1]\rightarrow
U(C(X_{k+1}^{(0)}, \KO)^\sim)$$ by $f_0(t)=\widetilde\phi_{k+1}(h_1(t), \ldots,
h_k(t)).$  Then
$v\in U_0(C(X_{k+1}^{(0)}, \KO)^\sim)$ and $f_0$ is a homotopy in
$U(C(X_{k+1}^{(0)}, \KO)^\sim)$ from 1 to $v.$  Also, applying Lemma
(\ref{PrepareForApproxPolarDecomp1}) to $A^{(k)}$ in place of
$A,$ $X_{k+1}^{(0)}$ in place of $Y,$
$\phi_{k+1}$ in place of $\phi,$ $a^{(k)}$ in place of $a,$
$\delta_k$ in place of $\epsilon,$ $\alpha_k$ in place of $\alpha,$
and $u^{(k)}=(u_1, \ldots, u_k)$ in place of $u,$ we have 
$$\big\|\big[v(x)|\widetilde\phi(\widetilde
a^{(k)})(x))|-\widetilde\phi(\widetilde a^{(k)})(x)\big]\big[1-p_{\alpha_k}(|\widetilde\phi(\widetilde
a^{(k)}(x)|)\big]\big\|<\delta_k,$$
for all $x\in X_{k+1}^{(0)}.$   Since
$\widetilde\phi_{k+1}(\widetilde a^{(k)})=\widetilde R(\widetilde a_{k+1}),$ we have 
$$\big\|\big[v(x)|\widetilde
a_{k+1}(x)|-\widetilde a_{k+1}(x)\big]\big[1-p_{\alpha_k}(|\widetilde
a_{k+1}(x)|)\big]\big\|<\delta_{k},$$
for all $x\in X_{k+1}^{(0)}.$  Then by the choice of $\delta_k,$ there
exists $u_{k+1}\in U_0(C(X_{k+1}, \KO)^\sim)$ and a homotopy $h_{k+1}$ in
$U(C(X_{k+1}, \KO)^\sim)$ such that $h_{k+1}(0)=1,$ such that
$h_{k+1}(1)=u_{k+1},$ such that $h_{k+1}(t)|_{X_{k+1}^{(0)}}=f_0(t)$ for all
$t\in [0, 1],$ such that $u_{k+1}|_{X_{k+1}^{(0)}}=v,$ and such that 
$$\big\|\big[u_{k+1}(x)|\widetilde a_{k+1}(x)|-\widetilde a_{k+1}(x)\big]
\big[1-p_{\alpha_{k+1}}(|\widetilde a_{k+1}(x)|)\big]\big\|<\delta_{k+1},$$
for all $x\in X_{k+1}.$   It is clear that $(u_1, \ldots, u_k, u_{k+1})$ is
a unitary $A^{(k+1)},$ and that for each $t\in [0, 1],$ we have $$(h_1(t), \ldots,
h_k(t), h_{k+1}(t))\in U(C(X_{k+1}, \KO)^\sim).$$  Then $t\mapsto 
(h_1(t), \ldots, h_{k+1}(t))$ is a homotopy in $U(C(X_{k+1}, \KO)^\sim)$
from $1$ to $(u_1, \ldots, u_k).$  So $(u_1, \ldots, u_k)\in
U_0(\widetilde{A^{(k+1)}}).$  This completes the inductive step.

Now take $u=(u_1, \ldots, u_n).$  Since for all $k \in \{1,\ldots, n\}$ and for all
$x\in X_k,$ we have $1-p_{\alpha_k}(|\widetilde a(x)|)\geq
1-p_\alpha(|\widetilde a(x)|),$  and since
$\delta_1<\delta_2<\cdots<\delta_k<\epsilon,$ 
(\ref{Eq3.[PrepareForApproxPolarDecomp2].4}) implies 
(\ref{Eq3.[PrepareForApproxPolarDecomp2].0}).
This finishes the proof.
\end{proof}

As a consequence of the above lemma, the next proposition will give an approximate 
polar decomposition for elements $a$
in a SRSHA such that the dimension of the the eigenspaces of the small eigenvalues
of $|a(x)|$ is large enough.

\begin{prop}
\label{ApproxPolarDecomp}
Let $$\left(X_1, A^{(1)}, \left(X_i, X_i^{(0)}, \phi_i, R_i,
A^{(i)}\right)_{i=2}^n\right)$$ be a SRSH system, let $A=A^{(n)},$ and let $X$ be the
total space.  Suppose that $\dim(X)=d<\infty.$  
Let $1>\epsilon>0$ and let $1>\alpha>0.$  Let $a\in A,$ and let
$\widetilde a=a+1\in \widetilde A.$  Suppose that for all $x\in X,$ we have
$\rank(p_{\alpha/2}(|\widetilde a(x)|))\geq d/2.$  Then there exists $u\in U_0(\widetilde A)$
such that $\|u|\widetilde a|-\widetilde a\|<\epsilon+2\alpha.$
\end{prop}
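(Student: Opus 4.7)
The plan is to reduce this proposition directly to Lemma \ref{PrepareForApproxPolarDecomp2}. That lemma already produces a unitary $u \in U_0(\widetilde A)$ realizing the weighted approximation
\[
\bigl\| [u(x)|\widetilde a(x)| - \widetilde a(x)][1 - p_\alpha(|\widetilde a(x)|)] \bigr\| < \epsilon
\]
pointwise in $x \in X,$ which, since $A$ sits inside $\bigoplus_{k} C(X_k,\KO),$ is the same as the norm of $[u|\widetilde a| - \widetilde a][1 - p_\alpha(|\widetilde a|)]$ in $\widetilde A.$ So I would simply invoke Lemma \ref{PrepareForApproxPolarDecomp2} with the given $\epsilon$ and $\alpha$ (the rank hypothesis is exactly the one required) to obtain such a $u.$

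Next I would use the orthogonal decomposition $1 = [1 - p_\alpha(|\widetilde a|)] + p_\alpha(|\widetilde a|)$ inside $\widetilde A$ (these are elements of the bidual, but multiplication on the right by them makes sense pointwise in $B(H)$ after evaluation at each $x$) to write
\[
u|\widetilde a| - \widetilde a = [u|\widetilde a| - \widetilde a][1 - p_\alpha(|\widetilde a|)] + [u|\widetilde a| - \widetilde a]\, p_\alpha(|\widetilde a|).
\]
The first summand has norm strictly less than $\epsilon$ by the lemma. For the second summand I would bound each factor separately: since $|\widetilde a(x)|$ and $p_\alpha(|\widetilde a(x)|)$ commute and the spectral projection cuts onto eigenvalues in $[0,\alpha),$ we have $\bigl\| |\widetilde a(x)|\, p_\alpha(|\widetilde a(x)|)\bigr\| \le \alpha,$ and hence
\[
\bigl\|\widetilde a(x)\, p_\alpha(|\widetilde a(x)|)\bigr\|^2 = \bigl\|p_\alpha(|\widetilde a(x)|)\,|\widetilde a(x)|^2\, p_\alpha(|\widetilde a(x)|)\bigr\| \le \alpha^2,
\]
so $\|\widetilde a\, p_\alpha(|\widetilde a|)\| \le \alpha$ and $\|u|\widetilde a|\, p_\alpha(|\widetilde a|)\| \le \alpha$ (since $u$ is a unitary). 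By the triangle inequality the second summand has norm at most $2\alpha.$

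Putting the two estimates together yields $\|u|\widetilde a| - \widetilde a\| < \epsilon + 2\alpha,$ which is the desired conclusion. There is essentially no obstacle here: the real work was in Lemma \ref{PrepareForApproxPolarDecomp2}. The only small subtlety to be careful about is that $p_\alpha(|\widetilde a(x)|)$ need not come from an element of $\widetilde A,$ but this causes no trouble because the estimates are carried out pointwise in the algebras $\widetilde{\KO}$ where the spectral projection genuinely lives (as a finite rank projection, by Corollary \ref{RankBdnessSpProj}), and the supremum of pointwise norms is the norm in $\widetilde A.$
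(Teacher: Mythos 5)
Your proposal is correct and follows essentially the same route as the paper: invoke Lemma \ref{PrepareForApproxPolarDecomp2} with the given $\epsilon$ and $\alpha,$ split via $1=[1-p_\alpha(|\widetilde a(x)|)]+p_\alpha(|\widetilde a(x)|),$ and bound the second piece by $2\alpha$ using $\bigl\||\widetilde a(x)|p_\alpha(|\widetilde a(x)|)\bigr\|\leq\alpha$ together with $\|\widetilde a(x)p_\alpha(|\widetilde a(x)|)\|^2=\|p_\alpha(|\widetilde a(x)|)|\widetilde a(x)|^2p_\alpha(|\widetilde a(x)|)\|\leq\alpha^2.$ The paper carries out the same estimate applied to vectors $\xi\in H$ pointwise in $x,$ which is only a cosmetic difference.
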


\begin{proof}
Let $u$ be the unitary obtained using 
Lemma \ref{PrepareForApproxPolarDecomp2}.  Then for all $x\in X$ and 
all $\xi\in H,$ where $H$ is the underlying Hilbert space, we have 
\begin{align*}
&\|[u(x)|\widetilde a(x)|-\widetilde a(x)](\xi)\|\\
&\hspace*{3em}\mbox{} \leq
\big\|\big[u(x)|\widetilde a(x)|-\widetilde a(x)\big]\big(1-p_\alpha(|\widetilde a(x)|)(\xi)\big)\big\|\\
&\hspace*{3em}\mbox{}\hspace*{3em}\mbox{}+\big\|\big[u(x)|\widetilde a(x)|-\widetilde a(x)\big]p_\alpha(|\widetilde a(x)|)(\xi)\big\|\\
&\hspace*{3em}\mbox{} < \epsilon \|\xi\|+\|(|\widetilde a(x)|)p_\alpha(|\widetilde a(x)|)(\xi)\|
+\|\widetilde a(x)p_\alpha(|\widetilde a(x)|)(\xi)\|\\
&\hspace*{3em}\mbox{} \leq \epsilon \|\xi\|+2\alpha \|\xi\|.
\end{align*}
Thus $\|[u(x)|\widetilde a(x)|-\widetilde a(x)]\|\leq \epsilon+2\alpha$ for all
$x\in X.$  So $\|u|\widetilde a|-\widetilde a\|\leq \epsilon+2\alpha.$
\end{proof}

\begin{cor}
\label{ApproxByInv}
Let $$\left(X_1, A^{(1)}, \left(X_i, X_i^{(0)}, \phi_i, R_i,
A^{(i)}\right)_{i=2}^n\right)$$ be a SRSH system, let $A=A^{(n)},$ and let $X$ be the
total space.  Suppose that $\dim(X)=d<\infty.$  
Let $1>\epsilon>0.$  Let $a\in A$ and let
$\widetilde a=a+1\in \widetilde A.$  Suppose that for all $x\in X,$ we have
$\rank(p_{\epsilon/8}(|\widetilde a(x)|))\geq d/2.$  Then there exists $b\in
\widetilde A$ such that $b$ is invertible and $\|\widetilde a-b\|<\epsilon.$
\end{cor}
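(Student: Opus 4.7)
The plan is to apply Proposition \ref{ApproxPolarDecomp} and then perturb the positive part of the approximate polar decomposition to make it (and hence the product) invertible.

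First I would choose the parameters so that the rank hypothesis of Proposition \ref{ApproxPolarDecomp} is met. Set $\alpha = \epsilon/4$ and $\epsilon_0 = \epsilon/4$. Then $\alpha/2 = \epsilon/8$, so the hypothesis $\rank(p_{\alpha/2}(|\widetilde a(x)|)) \geq d/2$ of Proposition \ref{ApproxPolarDecomp} is exactly what we are given. Applying that proposition yields a unitary $u \in U_0(\widetilde A)$ with
\[
\|u|\widetilde a| - \widetilde a\| < \epsilon_0 + 2\alpha = \tfrac{\epsilon}{4} + \tfrac{\epsilon}{2} = \tfrac{3\epsilon}{4}.
\]

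Next I would manufacture an invertible approximant from $u|\widetilde a|$. Choose any $\delta$ with $0 < \delta < \epsilon/4$, and set
\[
b = u\bigl(|\widetilde a| + \delta \cdot 1\bigr) \in \widetilde A,
\]
where $1$ is the adjoined identity. Since $|\widetilde a|$ is positive in $\widetilde A$, the element $|\widetilde a| + \delta \cdot 1$ has spectrum contained in $[\delta, \infty)$ and is therefore invertible in $\widetilde A$; being a product of two invertibles, $b$ is invertible. Finally,
\[
\|b - \widetilde a\| \leq \|u|\widetilde a| + \delta u - u|\widetilde a|\| + \|u|\widetilde a| - \widetilde a\| < \delta + \tfrac{3\epsilon}{4} < \epsilon.
\]

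There is no substantive obstacle here; the whole content has been packed into Proposition \ref{ApproxPolarDecomp} (and, before that, Lemma \ref{PrepareForApproxPolarDecomp2}). The only care needed is the bookkeeping of constants so that $\epsilon_0 + 2\alpha + \delta < \epsilon$, which is why I split $\epsilon$ into quarters above.
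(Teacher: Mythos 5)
Your proposal is correct and matches the paper's proof essentially verbatim: the paper also applies Proposition \ref{ApproxPolarDecomp} with $\epsilon/4$ in place of both $\epsilon$ and $\alpha$ (so that $\alpha/2=\epsilon/8$ matches the rank hypothesis) and then sets $b=u(|\widetilde a|+\epsilon/4)$. Your only deviation is taking the shift $\delta$ strictly less than $\epsilon/4$ rather than equal to it, which is an equally valid (if anything, slightly more cautious) bookkeeping choice.
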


\begin{proof}
Apply Proposition \ref{ApproxPolarDecomp} to $A,$ $\epsilon/4$ in place of
$\epsilon,$ $\epsilon/4$ in place of $\alpha,$ and $a\in A,$ to obtain a
unitary $u\in U_0(\widetilde A)$ such that $\|u|\widetilde a|-\widetilde
a\|<\epsilon/4+\epsilon/2=3\epsilon/4.$  Let $b=u(|\widetilde a|+\epsilon/4).$
Then $b$ is invertible and $$\|b-\widetilde a\|\leq \Bigl\|b-u|\widetilde
a|\Bigr\|+\|u|\widetilde a|-\widetilde a\|<\epsilon/4+3\epsilon/4=\epsilon.$$
\end{proof}

\begin{lem}
\label{KernelOfNonInvElem}
Let $$\left(X_1, A^{(1)}, \left(X_i, X_i^{(0)}, \phi_i, R_i,
A^{(i)}\right)_{i=2}^n\right)$$ be a SRSH system, let $A=A^{(n)},$ and let $X$ be the
total space.  Let $a\in A$ and let $\widetilde a=a+1\in \widetilde A.$ Let
$1>\alpha>0.$  Then the set $U=\{x\in X\colon \rank(p_\alpha(|\widetilde a(x)|)\geq
1\}$ is open.  Further, if $U\neq \varnothing,$ then $I_U=\{a\in
A\colon a|_{U^c}=0\}$ is a non-zero ideal of $A.$
\end{lem}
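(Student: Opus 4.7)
The plan is to prove the two assertions separately: openness of $U$ follows from pointwise lower semicontinuity of the spectral projection $p_\alpha(|\widetilde a(\cdot)|)$, while non-vanishing of $I_U$ is obtained by invoking Lemma \ref{SRSHAINon0dealOpenSet}(1) once we verify the attaching-spectrum condition \ref{AttachingSpectrumIntersection} for $U$.

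For openness, since $X = \bigsqcup_{k=1}^n X_k$ with each $X_k$ clopen in $X$, it suffices to show $U \cap X_k$ is open in $X_k$ for every $k$. Fix $x \in U \cap X_k$. Writing $|\widetilde a| = c + 1$ with $c \in A$ (which is possible because the character $\tau$ on $\widetilde A$ satisfies $\tau(|\widetilde a|) = |\tau(\widetilde a)| = 1$), we see that $y \mapsto |\widetilde a(y)| = c(y) + 1$ is continuous from $X_k$ into $\widetilde{\KO}.$ Apply Lemma \ref{SemiContOfSpProj2} with the role of the lemma's $\widetilde a$ played by $|\widetilde a(x)|$ to obtain $\delta > 0$ such that every self-adjoint $b \in \widetilde{\KO}$ with $\|b - |\widetilde a(x)|\| < \delta$ satisfies $\rank(p_\alpha(|\widetilde a(x)|)) \leq \rank(p_\alpha(b)).$ By continuity, there is a neighborhood $V$ of $x$ in $X_k$ on which $\||\widetilde a(y)| - |\widetilde a(x)|\| < \delta,$ so $\rank(p_\alpha(|\widetilde a(y)|)) \geq 1$ for every $y \in V,$ whence $V \subseteq U.$

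For the non-vanishing of $I_U,$ by Lemma \ref{SRSHAINon0dealOpenSet}(1) it suffices to verify condition \ref{AttachingSpectrumIntersection}: with $U_k = U \cap X_k$ and $W_k = \{x \in X_k^{(0)} \colon \Sp_x(\phi_k) \cap (\bigsqcup_{i=1}^{k-1} U_i) \neq \varnothing\},$ we must show $W_k = U_k \cap X_k^{(0)}$ for $k = 2, \ldots, n.$ Fix such $k$ and $x \in X_k^{(0)}.$ Since $\phi_k$ is non-vanishing, Proposition \ref{CharaterOfKMapsFromSRSHA} applied to ${\mathrm{ev}}_x \circ \phi_k$ produces mutually orthogonal projections $p_i = w_iw_i^* \in B(H)$ with isometries $w_i$ satisfying $w_i^* w_i = 1,$ together with points $y_1, \ldots, y_m$ (not necessarily distinct) whose range of values is $\Sp_x(\phi_k) \subseteq \bigsqcup_{j=1}^{k-1}(X_j \setminus X_j^{(0)}),$ such that $\phi_k(b)(x) = \sum_{i=1}^m w_i b(y_i) w_i^*$ for all $b \in A^{(k-1)}.$ Setting $p_{m+1} = 1 - \sum_{i=1}^m p_i$ and using the SRSH relation $a_k(x) = \phi_k(a^{(k-1)})(x)$ at $x \in X_k^{(0)},$ a short calculation gives
$$
\widetilde a(x) = \sum_{i=1}^m w_i a(y_i) w_i^* + \sum_{i=1}^m p_i + p_{m+1} = \sum_{i=1}^m w_i \widetilde a(y_i) w_i^* + p_{m+1}.
$$
The summands lie in the mutually orthogonal corners $p_i B(H) p_i,$ so Corollary \ref{FunCalcOfMultipleDirectSum} yields
$$
p_\alpha(|\widetilde a(x)|) = \sum_{i=1}^m w_i\, p_\alpha(|\widetilde a(y_i)|)\, w_i^*,
$$
the contribution of $p_{m+1}$ vanishing because as an element of the unital algebra $p_{m+1} B(H) p_{m+1}$ the projection $p_{m+1}$ is the identity, with spectrum $\{1\},$ and $\alpha < 1.$ Orthogonality of the ranges $p_i$ gives $\rank(p_\alpha(|\widetilde a(x)|)) = \sum_{i=1}^m \rank(p_\alpha(|\widetilde a(y_i)|)),$ so $x \in U$ if and only if some $y_i$ lies in $U,$ which is exactly the condition $x \in W_k.$ The main technical care-point is handling the unitization: one must express $|\widetilde a|$ as $c + 1$ with $c \in A$ to legitimately apply Lemma \ref{SemiContOfSpProj2} to $|\widetilde a(x)|,$ and one must track the complementary corner $p_{m+1}$ in the decomposition of $\widetilde a(x)$ to observe that it contributes no rank to $p_\alpha(|\widetilde a(x)|)$ when $\alpha < 1.$
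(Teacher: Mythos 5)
Your proof is correct and follows essentially the same route as the paper: openness via Lemma \ref{SemiContOfSpProj2} applied to $|\widetilde a(x)|=c(x)+1$, and non-vanishing of $I_U$ by verifying condition (*) of Lemma \ref{SRSHAINon0dealOpenSet} through the rank identity $\rank(p_\alpha(|\widetilde a(x)|))=\sum_i\rank(p_\alpha(|\widetilde a(y_i)|))$ at attaching points. The only (harmless) difference is that you compute this identity in the unitized picture via Corollary \ref{FunCalcOfMultipleDirectSum}, tracking the corner $p_{m+1}$ explicitly, whereas the paper shifts by $-1$ with Corollary \ref{ShiftedSpProj} and works with $p_{\alpha-1}(c(x))$; both yield the same conclusion.
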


\begin{proof}
If $U=\{x\in X\colon \rank(p_\alpha(|\widetilde a(x)|))\geq 1\}$ is empty, then we
are done.  So assume that $U\neq\varnothing.$  To show that $U$ is open, it
is enough to show that every $x\in U$ is an interior point, i.e.\ there
exists some open $V\subseteq U$ such that $x\in V.$  Fix $x_0\in U.$

Apply Lemma \ref{SemiContOfSpProj2} to $\alpha$ and $|\widetilde a(x_0)|$ to
obtain
$\delta>0.$  The map $x\mapsto |\widetilde a(x)|$ is continuous, and the set
$V=\left\{x\in X\colon  \Bigl\| |\widetilde a(x)|-|\widetilde
a(x_0)|\Bigr\|<\delta \right\}$ is open and contains $x_0.$  If $x\in
V,$ then the choice of $\delta$ implies that $1\leq \rank(p_\alpha(|\widetilde
a(x_0)|))\leq \rank(p_\alpha(|\widetilde a(x)|)).$  Therefore $V\subseteq U,$
and hence $U$ is open.

To show that $I_U\neq 0,$ we verify the condition in part 1 of 
Lemma \ref{SRSHAINon0dealOpenSet}.  For each $k \in \{1, \ldots, n\},$ let
$U_k=X_k\cap U,$ and for each $k=2, \ldots, n,$ let
$$W_k=\left\{x\in X_k^{(0)}\colon  \Sp_x(\phi_k)\cap
\left(\bigsqcup_{i=1}^{k-1} U_i\right)\neq\varnothing\right\}.$$
Let $2\leq k\leq n$ and let $x\in W_k.$
Then $\Sp_x(\phi_k)\cap U\neq \varnothing,$ so let $y_0\in \Sp_x(\phi_k)\cap
U.$ Let $w_1, \ldots, w_l$ be the family of isometries with orthogonal
ranges such that 
$\phi_k(f)=\sum_{i=1}^l w_if(y_i)w_i^*$ for all
$f\in A^{(k-1)},$ where $y_i\in \Sp_x(\phi_k)$ for $i \in \{1, \ldots, l\}.$
Let $i_0$ be an integer such that $1\leq i_0\leq l$ and  $y_{i_0}=y_0.$  
Let $c\in A_{s.a.}$ be such that $|\widetilde a|=c+1.$  
Then 
\begin{align*}
p_\alpha(|\widetilde a(x)|)&=p_\alpha(c(x)+1)=p_{\alpha-1}(c(x))\\
&\hspace*{3em}\mbox{}=\sum_{i=1}^l
w_ip_{\alpha-1}(c(y_i))w_i^*\geq
w_{i_0}p_{\alpha-1}(c(y_0))w_{i_0}^*\\
&\hspace*{3em}\mbox{}=w_{i_0}p_\alpha(c(y_0)+1)w_{i_0}^*
=w_{i_0}p_\alpha(|\widetilde a(y_0)|)w_{i_0}^*.
\end{align*}
So, since $y_0\in U,$ we have
$\rank(p_\alpha(|\widetilde
a(x)|))\geq \rank(p_\alpha(|\widetilde a(y_0)|))\geq 1.$  Hence $x\in U_k,$
and so $x\in U_k\cap X_k^{(0)}.$  Therefore $W_k\subseteq U_k\cap
X_k^{(0)}.$

Now let $x\in U_k\cap X_k^{(0)}.$  Let $w_1, \ldots, w_l$ be the family of
isometries with orthogonal ranges such that 
$\phi_k(f)=\sum_{i=1}^l w_if(y_i)w_i^*$ for all
$f\in A^{(k-1)},$ where $y_i\in \Sp_x(\phi_k)$ for all $i \in \{1, \ldots, l\}.$
Then $$\rank(p_\alpha(|\widetilde a(x)|))
=\rank\left(\sum_{i=1}^l w_ip_\alpha(|\widetilde a(y_i)|)w_i^*\right)
=\sum_{i=1}^l \rank(p_\alpha(|\widetilde a(y_i)|)).$$  Since $x\in U,$ for some
$i \in \{1, \ldots, l\},$ we have $\rank(p_\alpha(|\widetilde a(y_i)|))\geq 1.$
Thus
$y_i\in \bigsqcup_{j=1}^{k-1} U_j.$  So $\Sp_x(\phi_k)\cap
\left(\bigsqcup_{j=1}^{k-1} U_j\right)\neq \varnothing,$ and so
$x\in W_k.$  Hence $U_k\cap X_k^{(0)}\subseteq W_k.$  

Thus by Lemma \ref{SRSHAINon0dealOpenSet}, $I_U\neq 0.$
\end{proof}

\begin{lem}
\label{RankOfSpProjInSimpleInductLimOfSRSHA}
Let $(A_n, \psi_n)$ be an inductive system of SRSHAs and let $A$ be the
inductive limit.  Let $X_n$ be the total space for $A_n.$
Suppose that $\psi_n$ is injective for all $n,$ 
that $\psi_n$ is non-vanishing for all $n,$ and suppose that $A$ is
simple.  Let $1>\alpha>0.$  Then for all $n\geq 1$ and all $a\in A_n$ such
that $\widetilde a=a+1$ is not
invertible in $\widetilde A_n,$ there exists some $m\geq n$ such that for all $k\geq
m$ and all $x\in X_k,$ we have $\rank(p_\alpha(|\widetilde \psi_{n, k}(\widetilde
a)(x)|))\geq 1,$ where $\widetilde \psi_{n, k}$ is the unitization of the map $\psi_{n,
k}.$
\end{lem}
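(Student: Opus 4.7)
The plan is to pick out an open set $U\subseteq X_n$ on which $|\widetilde a|$ already has small spectrum, use Proposition~\ref{SpectrumStructureSimplIndLimSRSHA} to spread this property along the inductive limit, and then compute the rank of the relevant spectral projection at each $x\in X_k$ via the decomposition in Proposition~\ref{CharaterOfKMapsFromSRSHA}.

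First I would show that non-invertibility of $\widetilde a$ in $\widetilde{A_n}$ yields some $x_0\in X_n$ with $0\in \Sp(|\widetilde a(x_0)|).$ Indeed, $A_n$ embeds into the direct sum $B=\bigoplus_k C(Y_k,\KO)$ over its base spaces, and the induced unital embedding $\widetilde{A_n}\hookrightarrow \widetilde B$ preserves spectra, so $|\widetilde a|$ is not invertible in $\widetilde B.$  In $\widetilde B,$ invertibility is equivalent to pointwise invertibility on the compact total space $X_n,$ so some $|\widetilde a(x_0)|$ is non-invertible in $\widetilde\KO.$  Writing $|\widetilde a(x_0)|=1+b$ with $b\in \KO_{s.a.},$ non-invertibility forces $-1$ to be an eigenvalue of the compact operator $b,$ making $0$ an honest eigenvalue of $|\widetilde a(x_0)|.$  In particular $\rank(p_\alpha(|\widetilde a(x_0)|))\geq 1,$ so the set $U=\{x\in X_n\colon \rank(p_\alpha(|\widetilde a(x)|))\geq 1\}$ is non-empty; by Lemma~\ref{KernelOfNonInvElem} it is open and $I_U$ is a non-zero ideal of $A_n.$

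Now I apply Proposition~\ref{SpectrumStructureSimplIndLimSRSHA} (using simplicity of $A$ and injectivity of the $\psi_n$) to obtain $m\geq n$ with $\Sp_x(\psi_{n,k})\cap U\neq \varnothing$ for every $k\geq m$ and every $x\in X_k.$  Fix such a $k$ and $x.$  Since $\psi_{n,k}$ is non-vanishing by iterated application of Lemma~\ref{CompoIsNon-Vanishing}, Proposition~\ref{CharaterOfKMapsFromSRSHA} applied to the nonzero *-homomorphism $\mathrm{ev}_x\circ \psi_{n,k}\colon A_n\rightarrow \KO$ yields isometries $w_1,\ldots,w_t\in B(H)$ with mutually orthogonal ranges $p_i=w_iw_i^*$ and points $y_1,\ldots,y_t$ enumerating $\Sp_x(\psi_{n,k})$ (with multiplicity) such that $\psi_{n,k}(b)(x)=\sum_{i=1}^t w_ib(y_i)w_i^*$ for every $b\in A_n.$  Using $w_i^*w_j=\delta_{ij}\cdot 1_H,$ a direct computation gives
\[
|\widetilde\psi_{n,k}(\widetilde a)(x)|^2 =\sum_{i=1}^t w_i|\widetilde a(y_i)|^2 w_i^* + \Bigl(1-\sum_{i=1}^t p_i\Bigr),
\]
and then Corollary~\ref{FunCalcOfMultipleDirectSum}, applied twice with respect to the orthogonal projections $p_1,\ldots,p_t,\,1-\sum_i p_i,$ gives first
\[
|\widetilde\psi_{n,k}(\widetilde a)(x)|=\sum_{i=1}^t w_i|\widetilde a(y_i)|w_i^* + \Bigl(1-\sum_{i=1}^t p_i\Bigr),
\]
and then
\[
p_\alpha\bigl(|\widetilde\psi_{n,k}(\widetilde a)(x)|\bigr)=\sum_{i=1}^t w_i\,p_\alpha(|\widetilde a(y_i)|)\,w_i^*,
\]
the corner on $1-\sum_i p_i$ dropping out because $\alpha<1$ while the restriction there is the identity. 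Choosing $y_i\in \Sp_x(\psi_{n,k})\cap U,$ that single summand already has rank $\geq 1,$ and since the $p_i$ are mutually orthogonal the ranks add, yielding $\rank(p_\alpha(|\widetilde\psi_{n,k}(\widetilde a)(x)|))\geq 1.$

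The main obstacle I anticipate is the first step: converting the purely algebraic condition that $\widetilde a$ fail to be invertible in $\widetilde{A_n}$ into the concrete geometric conclusion that some $|\widetilde a(x_0)|$ has $0$ in its spectrum. The unital embedding into $\widetilde B$ (where invertibility is pointwise by compactness of $X_n$) handles this cleanly, but one must exploit the fact that $|\widetilde a(x_0)|$ has the form $1+(\text{compact})$, so that $0$ appears in the spectrum only as an honest eigenvalue, which is precisely what makes $p_\alpha$ non-trivial. The rest is functional-calculus bookkeeping, for which Corollary~\ref{FunCalcOfMultipleDirectSum} does all the heavy lifting.
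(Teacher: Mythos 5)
Your proof is correct and follows essentially the same route as the paper's: non-invertibility of $\widetilde a$ produces a point $x_0$ with $0$ an eigenvalue of $|\widetilde a(x_0)|$ (the paper phrases this via the Fredholm alternative rather than via ``$-1$ is an eigenvalue of the compact part''), so $U$ is non-empty, Lemma \ref{KernelOfNonInvElem} and Proposition \ref{SpectrumStructureSimplIndLimSRSHA} propagate this forward, and the rank computation at each $x\in X_k$ is the same bookkeeping, with your direct use of Corollary \ref{FunCalcOfMultipleDirectSum} replacing the paper's shift $p_\alpha(\widetilde c)=p_{\alpha-1}(c)$. The one cosmetic wrinkle is that you assert ``$|\widetilde a|$ is not invertible'' before going pointwise, which is justified here only because each fibre is $1$ plus a compact operator (hence Fredholm of index zero); it is slightly cleaner to localize $\widetilde a$ itself first and then pass to $|\widetilde a(x_0)|$, as the paper does.
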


\begin{proof}
Let $U=\{x\in X_n\colon  \rank(p_\alpha(|\widetilde a(x)|))\geq 1\}.$  We first show
that $U\neq \varnothing$.  Since $\widetilde a$ is not invertible, there exists some
$x_0$ in the total space of $A_n$ such that $\widetilde a (x_0)$ is not invertible. 
Then by the Fredholm Alternative, the operator $\widetilde a(x_0)$ is not injective,
which implies that $|\widetilde a(x_0)|$ is not 
injective.  Then $p_\alpha (|\widetilde a(x_0)|) \neq 0$, which implies that $x_0 \in
U$.  This shows that $U\neq \varnothing$.

By Lemma
\ref{KernelOfNonInvElem}, $I_U=\{a\in A_n\colon  a|_{U^c}=0\}$ is a non-zero
ideal.  Then by Proposition \ref{SpectrumStructureSimplIndLimSRSHA}, there exists
$m\geq N$ such that for all $k\geq m,$ and for all $x\in X_k,$ we have
$\Sp_x(\psi_{n, k})\cap U\neq \varnothing.$  Let $k\geq m,$ let $x\in
X_k,$ and let $w_1, \ldots, w_l$ be the family of isometries with
orthogonal ranges such that $\psi_{n, k}(f)(x)=\sum_{i=1}^l
w_if(y_i)w_i^*$ for all $f\in A_n,$ where $\{y_i\colon i=1, \ldots,
l\}=\Sp_x(\psi_{n,  k}).$ Let $y_0\in \Sp_x(\psi_{n, k})\cap U$ and choose
$1\leq i_0 \leq l$ such that $y_{i_0}=y_0.$
Let $c\in (A_n)_{s.a.}$ be such that $|\widetilde a|=\widetilde c.$  Then
$|\widetilde\psi_{n, k}(\widetilde a)|=\widetilde\psi_{n, k}(|\widetilde
a|)=\widetilde\psi_{n, k}(\widetilde c)=\psi_{n, k}(c)+1.$ Thus 
\begin{align*}
\rank(p_{\alpha}(|\widetilde \psi_{n, k}(\widetilde a)(x)|))
&=\rank(p_\alpha(|\widetilde \psi_{n, k}(\widetilde a)|(x)))
 =\rank(p_\alpha(\psi_{n, k}(c)(x)+1))\\
&=\rank(p_{\alpha-1}(\psi_{n, k}(c)(x)))
=\sum_{i=1}^l\rank(p_{\alpha-1}(c(y_i)))\\
&\geq \rank(p_{\alpha-1}(c(y_{i_0}))
 =\rank(p_\alpha(c(y_0)+1))\\
&=\rank(p_\alpha(\widetilde c(y_0)))=\rank(p_\alpha(|\widetilde a(y_0)|))\geq 1.
\end{align*}
The last inequality above holds because $y_0\in U.$  
\end{proof}

\begin{thm}
\label{StableRankOfSimpleInductLimOfSRSHA}
Let $(A_n, \psi_n)$ be an inductive system of SRSHAs and let $A$ be the
inductive limit.  Let $X_n$ be the total space for $A_n.$
Suppose that $\psi_n$ is injective and non-vanishing for all $n,$ 
and suppose that $A$ is
simple.  Also assume that there exists $d\in \N$ such that 
$\dim (X_n)\leq d$ for all $n\geq 1.$
Then $A$ has topological stable rank one.
\end{thm}

\begin{proof}
We first show that an element of the form $b+1\in \widetilde A,$ where $b\in
A,$  can be
approxmiated arbitrarily closely by some invertible element in $\widetilde A.$

Let $b\in A,$  let $1>\epsilon>0,$ and let $\widetilde b=b+1.$  Let $n\geq 1,$
and let $a\in A_n$ satisfy $\|\widetilde\psi^n(\widetilde a)-\widetilde
b\|<\epsilon/2,$ where $\psi^n: A_n \rightarrow A$ is the standard map that comes with the
inductive limit.  
If $\widetilde a$ is invertible in $A_n,$ then $\widetilde\psi^n(\widetilde a)$ is
invertible in $\widetilde A,$ and we are done.
So assume that $\widetilde a$ is
not invertible in $A_n.$  Then by Lemma
\ref{RankOfSpProjInSimpleInductLimOfSRSHA}, using $\epsilon/16$ as $\alpha,$
 find some $m_1\geq n$ such that for all $k\geq m_1,$
$\rank(p_{\epsilon/16}(|\widetilde\psi_{n, k}(\widetilde a)(x)|))\geq 1$ for all
$x\in X_k.$

For each $n\geq 1,$ let $X_{n, 1}, \ldots, X_{n, l(n)}$ be the base spaces
of $A_n,$ let $X_{n, 2}^{(0)}, \ldots , X_{n, l(n)}^{(0)}$ be the
attaching spaces, and let $X_{n, 1}^{(0)}=\varnothing.$  
If for all $k \geq m_1,$ the set 
$\bigsqcup_{i=1}^{l(k)} (X_{k, i}\setminus X_{k, i}^{(0)})$ is a finite set, 
then for all $k\geq m_1$ the algebra $A_k$ is simply a finite direct sum of 
copies of $\KO.$  
This means that 
$A_k$ 
has topological stable rank one for all $k \geq m_1,$ which implies that $A$ has
topological stable rank one, and we are done.  So we can assume that there
exists some $m_2\geq m_1$ such that 
$\bigsqcup_{i=1}^{l(m_2)}
(X_{m_2, i}\setminus X_{m_2, i}^{(0)})$ is infinite.  Let $1\leq l\leq
l(m_2)$ be the largest integer such that $X_{m_2, l}\setminus X_{m_2,
l}^{(0)}$ is infinite.  Then $A_{m_2}$ is isomorphic to
$A_{m_2}^{(l)}\oplus \left(\bigoplus_{i=1}^{l'} \KO\right)$ for
some $l'\in \N\cup\{0\},$ via some isomorphism $$h\colon A_{m_2}\rightarrow
A_{m_2}^{(l)}\oplus \left(\bigoplus_{i=1}^{l'} \KO\right)$$ such that the
composition 
$A_{m_2}\xrightarrow{h}A_{m_2}^{(l)}\oplus \left(\bigoplus_{i=1}^{l'} \KO\right)
\rightarrow A_{m_2}^{(l)}$ (the map on the right is the standard projection) 
is the restriction map $A_{m_1}\rightarrow
A_{m_1}^{(l)}.$
Let $d_1$ be
an integer greater that $d/2$ and let $x_1, \ldots, x_{d_1}\in X_{m_2, l}
\setminus X_{m_2, l}^{(0)}.$  For each $i \in \{1, \ldots,
d_1\},$ let $V_i\subseteq X_{m_2, l}\setminus X_{m_2, l}^{(0)}$ be an open
neighborhood of $x_i$ such
that $\{V_i\colon i=1,
\ldots, d_1\}$ is  disjoint.  For each $i \in \{1, \ldots, d_1\},$ let
$$J_i=\{a\in A_{m_2}^{(l)}\colon a|_{V_i^c}=0\}.$$  Then each $J_i$ is a non-zero
closed
two sided ideal of $A_{m_2}^{(l)}\oplus \left(\bigoplus_{i=1}^{l'}
\KO\right).$  For each $i \in \{1, \ldots, d_1\},$ let $I_i=h^{-1}(J_i).$  Since
$\{J_i\colon i=1, \ldots, d_1\}$ is  orthogonal, so is $\{I_i\colon i=1,
\ldots, d_1\}.$  For each $i \in \{1, \ldots, d_1\},$ let 
$$W_i=\{x\in X_{m_1}\colon {\text{ 
there exists some }} a\in I_i {\text{ such that }} a(x)\neq 0\}.$$
Then for each $i=1,
\ldots, d_1,$ we have $V_i\subseteq
W_i$ and  $W_i\cap \left(\bigsqcup_{j=1}^{l(m_2)}(X_{m_2, j}\setminus X_{m_2,
j}^{(0)})\right)=V_i.$

Now, for each $i \in \{1, \ldots, d_1\},$ apply Proposition
\ref{SpectrumStructureSimplIndLimSRSHA}, to obtain some $n_i\geq m_2$ such
that for all $k\geq n_i,$ and for all $x\in X_k,$ $\Sp_x(\psi_{m_1, k})\cap
W_i\neq \varnothing.$  Let $n_0=\max\{n_1, \ldots, n_{d_1}\}.$  Let $k\geq
n_0$ and let $x\in X_k.$  Then $\Sp_x(\psi_{m_2, k})\cap W_i\neq \varnothing$
for each $i \in \{1, \ldots, d_1\}.$  
So for each $i \in \{1, \ldots, d_1\},$ we can choose $y_i\in
\Sp_x(\psi_{m_2, k})\cap W_i.$  Since for each $i \in \{1, \ldots, d_1\},$ 
$$y_i\in W_i\cap \left(\bigsqcup_{i=1}^{l(m_2)}(X_{m_2, i}\setminus X_{m_2,
i}^{(0)})\right) = V_i,$$ and since $V_1, \ldots, V_{d_2}$ are pairwise disjoint, we
see that  $y_1,\ldots,  y_{d_1}$ are  distinct.
Let $w_1,
\ldots, w_t$ be isometries with mutually orthogonal ranges such that for
all $f\in A_{m_2}$ we have $\psi_{m_2, k}(f)(x)
=\sum_{i=1}^t w_if(z_i)w_i^*,$ where $\{z_i\colon i=1, \ldots,
t\}=\Sp_x(\psi_{m_2, k}).$  
Since $m_2\geq m_1,$ we have 
$\rank(p_{\epsilon/16}(|\widetilde\psi_{n, m_2}(\widetilde a)(y_i)|))\geq 1$ for
each
$i \in \{1, \ldots, d_1\}.$  Let $c\in (A_{m_2})_{s.a.}$ satisfy $|\widetilde\psi_{n,
m_2}(\widetilde a)|=\widetilde c.$ 
Then 
\begin{align*}
\rank(p_{\epsilon/16}(|\widetilde \psi_{n, k}(\widetilde a)(x)|)
&=\rank(p_{\epsilon/16}(\widetilde \psi_{m_2, k}(|\widetilde \psi_{n, m_2}(\widetilde
a)|)(x)))\\
&=\rank(p_{\epsilon/16}(\widetilde \psi_{m_2, k}(\widetilde c)(x)))\\
&=\rank(p_{(\epsilon/16)-1}(\psi_{m_2, k}(c)(x)))\\
&=\rank\left(p_{(\epsilon/16)-1}\left( \sum_{i=1}^t
w_ic(z_i)w_i^* \right)\right)\\
&\geq \sum_{i=1}^{d_2} \rank(p_{(\epsilon/16)-1}(c(y_i)))\\
&=\sum_{i=1}^{d_2} \rank(p_{\epsilon/16}(\widetilde c(y_i)))\\
&=d_1\geq d/2\geq \dim(X_k)/2.
\end{align*}
Then by Corollary \ref{ApproxByInv}, there exists some invertible element
$c\in \widetilde A_k$ such that $\|\widetilde\psi_{n, k}(\widetilde
a)-c\|<\epsilon/2.$  So $\widetilde\psi^k(c)$ is invertible in
$\widetilde A,$ and 
\begin{align*}
\|\widetilde\psi^k(c)-\widetilde b\|
&\leq\|\widetilde\psi^k(c)-\widetilde \psi^k(\widetilde\psi_{n, k}(\widetilde a))\|
+\|\widetilde \psi^k(\widetilde\psi_{n, k}(\widetilde a))-b\|\\
&=\|c-\widetilde\psi_{n, k}(\widetilde a)\| +\|\widetilde \psi^n(\widetilde a)-b\|\\
&<\epsilon/2 + \epsilon/2.
\end{align*}
Thus we have shown that for all $b\in A$ and all $\epsilon>0,$ there
exists some invertible element $c\in \widetilde A$ such that $\|\widetilde
b-c\|<\epsilon.$
Next will show that for all $b\in A$ and  all $\epsilon>0,$ there
exists some $c\in A$ such that $c+1$ is invertible and
$\|\widetilde c-\widetilde b\|<\epsilon.$  

Let $b\in A$ and let $1>\epsilon>0.$ 
By what we just proved above, $\widetilde b\in \overline{\text{inv}(\widetilde
A)},$ where $\text{inv} (\widetilde A)$ denote the set of all invertible elements
of $\widetilde A.$  
So there exists a sequence $(a_n, \lambda_n)\in \text{inv}(\widetilde A)$
such that $\|(a_n, \lambda_n)-(b, 1)\|\rightarrow 0.$  Then
$\lambda_n\rightarrow 1.$  So $(\lambda_n^{-1}a_n, 1)=\lambda_n^{-1}(a_n,
\lambda_n)\rightarrow \widetilde b.$  Thus we can pick some $n$ such that
$\|(\lambda_n^{-1}a_n, 1)-\widetilde b\|<\epsilon.$ Setting $c=\lambda_n^{-1}a_n,$
we see that $\widetilde c=\lambda_n^{-1}(a_n, \lambda_n)$ is invertible and
$\|\widetilde c-\widetilde b\|<\epsilon.$
Then by Proposition 4.2 of \cite{DefTSR1}, the algebra $A$ has topological stable
rank one.
\end{proof}

Many arguments in this chapter may be simplified greatly if every SRSHA is the 
tensor product of a RSHA with $\KO;$ however  we were not able to determine 
whether every SRSHA is the tensor product of a RSHA with $\KO.$  In the approach
we used when trying to resolve this question, we found that in order to show that a SRSHA is the 
tensor product of a RSHA with $\KO$, we needed to extend projection valued 
functions over a closed subspace of a compact metric space to the entire space.
This cannot be done in general, and so we feel that it is not true that every
SRSHA is the tensor product of a RSHA with $\KO.$  

Also, SRSHAs are likely to be
$\KO$-stable.  If $A$ is a SRSHA, then $A$ is contained in 
$B = \bigoplus_{i = 1}^n C(X_i, \KO)$ as a $\Cstar$-subalgebra, which implies that $A\otimes \KO$
is a $\Cstar$-subalgebra of $B \otimes \KO.$  The obvious
*-isomorphism from $B \otimes \KO$ to $B$ restricted to $A \otimes \KO$ may very well be a
*-isomorphism from $A \otimes \KO$ to  $A$.

	
	\renewcommand{\bibname}{\textsc{REFERENCES}} 
{\singlespacing
\bibliographystyle{elsart-num-sort}	
	\addcontentsline{toc}{chapter}{\bibname}

}					
	

\end{document}